\numberwithin{equation}{section}
\newtheorem{theorem}{Theorem}[section]
\newtheorem{corollary}[theorem]{Corollary}
\newtheorem{definition}[theorem]{Definition}
\newtheorem{lemma}[theorem]{Lemma}
\newtheorem{proposition}[theorem]{Proposition}
\newtheorem{remark}[theorem]{Remark}
\newtheorem*{theorem*}{Theorem}
\newtheorem*{corollary*}{\bf Corollary}
\theoremstyle{remark}
\newtheorem{example}[theorem]{Example}
\title[Gromov width of Bott-Samelson varieties]{The Gromov width of Bott-Samelson varieties}
\author[B.~N.~Chary]{Narasimha Chary Bonala}
\address{Ruhr-Universit\"at Bochum, Fakult\"at f\"ur Mathematik, D-44780 Bochum, Germany}
\email{Narasimha.Bonala@rub.de and stephanie.cupit@rub.de}
\author[S.~Cupit-Foutou]{St\'ephanie Cupit-Foutou}
\thanks{This research was supported by the  CRC/TRR 191 “Symplectic Structures in Geometry, Algebra and Dynamics” of the Deutsche Forschungsgemeinschaft.}
\keywords{Gromov width, Bott-Samelson varieties}
\begin{document}

\begin{abstract}  We prove that the Gromov width of any Bott-Samelson variety
associated to a reduced expression and equipped with a rational K\"ahler form equals the symplectic area of a minimal curve.
From this, we derive an estimate for the Seshadri constants of ample line bundles on Bott-Samelson varieties.
\end{abstract}

\maketitle

\section{Introduction} 
The Gromov width of a $2n$-dimensional symplectic manifold is the largest capacity $a$ for which a ball in $\mathbb R^{2n}$ of radius $\sqrt{a/\pi}$, equipped with the standard symplectic form, can be symplectically embedded in this manifold. By Darboux Theorem, this symplectic invariant is a positive number; it originates in the work of Gromov's while proving his celebrated non-squeezing theorem (\cite{Gro}). There have been many works dedicated to the computation or estimates of the Gromov width of symplectic manifolds (see e.g.~\cite{biran, kartol,LMZ,Cas,FLP,HLS} and references therein). 

In this paper, we compute the Gromov width of the Bott-Samelson varieties which are  natural desingularizations of complex Schubert varieties. In particular, we extend the results on the Gromov width of rational coadjoint orbits of connected simple compact Lie groups obtained in~\cite{FLP}.
Gromov widths are closely related to Seshadri constants of line bundles (see~\cite{MDP}). The latter have been busily investigated in algebraic geometry as a measure of local positivity (see e.g. Chap.5 in~\cite{Laz04}).
From our computation of the Gromov width, we derive an estimate for the Seshadri constants of ample line bundles on Bott-Samelson varieties.

Our computation of the Gromov width is performed in two steps. 
First, by using Gromov's $J$-holomorphic curves techniques applied to minimal curves, we prove that the Gromov width of the Bott-Samelson varieties we consider is bounded above by the symplectic areas of the minimal curves of these varieties (Theorem~\ref{thm:main-upper}).
Secondly, to obtain a lower bound, we apply embedding methods: symplectic embeddings of balls into symplectic toric manifolds can be derived from embeddings of simplices into the momentum images of the latter manifolds;
this method can be carried out to more general projective manifolds via toric degenerations associated to Newton-Okounkov bodies (see \cite{Kav, Pab18}). 
 This approach has been already followed by several authors; see e.g.~\cite{Pab18} for some review.
This method applied to concrete simplices included in  Newton-Okounkov bodies unimodular to generalized string polytopes 
(as defined in~\cite{Fujita}) enables us to get a lower bound for the Gromov width in case the Bott-Samelson varieties are equipped with a positive integral $2$-form (Theorem~\ref{thm:main-lower}).
Finally, by making use of Brion--Kannan's characterization (\cite{BK21}) of the minimal curves of Bott-Samelson varieties that are birational to Schubert varieties, we show that the minimum symplectic area among such curves equals the size of one of the simplices alluded above. From this, we can conclude that the lower and upper bounds we obtain do coincide (Corollary~\ref{cor:main}).

To state our results more explicitly, let us set up some notation.
 Let $G$ be a connected semisimple complex algebraic group of rank $n$. Fix a maximal torus $T$ and a Borel subgroup $B\subset G$ containing $T$. 
Let $\alpha_1, \ldots, \alpha_n$  and $\alpha_1^{\vee}, \ldots, \alpha_n^{\vee}$ be the corresponding simple roots and simple coroots respectively. The latter form a basis of the cocharacter lattice $\Xi^*(T)$ with dual basis consisting of the fundamental weights $\varpi_1,\ldots,\varpi_n$.
Let $W$ be the Weyl group of $(G,T)$ and $s_i\in W$ be the reflection corresponding to the simple root $\alpha_i$. 
By $P_{i}$ we denote the minimal parabolic subgroup of $G$ generated by $B$ and any representative $\dot s_i$ of $s_i$.

Given a sequence of simple roots ${\bf i}= (\alpha_{i_1}, \ldots,\alpha_{i_r})$, the corresponding
{\it Bott-Samelson variety} $Z_{\bf i}$ is the quotient
$$
Z_{\bf i}:= (P_{i_1}\times \cdots \times P_{i_r})/B^r
$$
where $B^r$ acts on $P_{i_1}\times \cdots \times P_{i_r}$ by 
$$
(p_1, \ldots, p_r)\cdot (b_1, \ldots, b_r):=(p_1b_1, b_1^{-1}p_2b_2, \ldots, b_{r-1}^{-1}p_rb_r)
$$
for $p_j\in P_{i_j}$ and $b_j\in B$ for all $1\leq j\leq r$. 

Let $w=s_{i_1}s_{i_2}\cdots s_{i_r}$.
Throughout this paper, we assume that 
this is a reduced decomposition of $w\in W$. 
In this case, the corresponding Bott-Samelson variety is a desingularization of the Schubert variety in $G/B$ associated to $w$.

In Section~\ref{sec:BS-varieties}, the reader can find some material on Bott-Samelson varieties and results of~\cite{BK21} on minimal curves.

In Section~\ref{sec:up},
we prove that the Gromov width $w_G(Z_{\bf i}, \omega)$ of $Z_{\bf i}$ equipped with any K\"ahler $2$-form $\omega$ is bounded from above by the symplectic area $\int_C \omega$ of any minimal curve $C$ of $Z_{\bf i}$. The latter is denoted by $\omega([C])$ below. 

\begin{theorem}\label{thm:main-upper}
Let $[\omega]\in H^2(Z_{\bf i},\mathbb R)$ be K\"ahler.
Then 
$$
w_G(Z_{\bf i}, \omega)\leq\min\{\omega([C]): \mbox{$C$ minimal curve of $Z_{\bf i}$}\}, $$
\end{theorem}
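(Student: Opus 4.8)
The plan is to use the standard Gromov $J$-holomorphic curve obstruction: if a symplectic ball of capacity $a$ embeds into $(Z_{\bf i},\omega)$, and if through every point of the manifold there passes a $J$-holomorphic curve (for a generic compatible $J$) in a fixed homology class $A$, then $a\le\omega(A)$. Concretely, I would fix a symplectic embedding $\phi\colon B^{2N}(a)\hookrightarrow Z_{\bf i}$ of a ball of capacity $a$, choose an almost complex structure $J$ compatible with $\omega$ and tamed so that $\phi$ is $J$-holomorphic near the center, and then consider a $J$-holomorphic curve $u\colon S^2\to Z_{\bf i}$ in the class $A=[C]$ of a \emph{minimal} curve passing through the image of the center $\phi(0)$. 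Monotonicity (the Lelong-type inequality for minimal-area $J$-holomorphic curves through a point of a Darboux/ball chart) then forces $\omega([C])\ge a$. Taking the supremum over all such embedded balls gives $w_G(Z_{\bf i},\omega)\le\omega([C])$, and then the infimum over all minimal curves $C$ yields the stated bound.

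The key steps, in order, are: (i) recall that a minimal curve $C$ generates an extremal ray of the Mori cone $\overline{NE}(Z_{\bf i})$ and has the property that the class $[C]$ cannot be decomposed as a sum of two nonzero effective classes — this rigidity is what prevents bubbling from destroying the curve; (ii) establish a \emph{nonemptiness/covering} statement: for generic compatible $J$ there is a $J$-holomorphic rational curve in class $[C]$ through any prescribed point of $Z_{\bf i}$ — here I would use that $Z_{\bf i}$ is a smooth projective rational variety, that minimal rational curves sweep out the variety, and a Gromov-compactness plus deformation argument to pass from the integrable complex structure to a generic $J$; (iii) invoke the monotonicity lemma for $J$-holomorphic curves in a ball to conclude $\omega([C])\ge a$ whenever $B^{2N}(a)$ symplectically embeds; (iv) assemble the inequalities and minimize over minimal curves.

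The main obstacle is step (ii): controlling the Gromov compactification of the space of $J$-holomorphic curves in class $[C]$ as $J$ varies from the integrable structure to a generic one, so as to guarantee that no bubbling occurs and that the ``through a generic point'' property is preserved. The indecomposability of $[C]$ handles the worst bubbles (a stable-map limit in class $[C]$ with at least two components would split $[C]$ into nonzero effective pieces, contradiction), but one must still ensure the expected dimension of the moduli space is large enough that the evaluation map is dominant — i.e.\ that $-K_{Z_{\bf i}}\cdot[C]$ is positive, which holds because $Z_{\bf i}$ is a smooth projective toric-like (iterated $\mathbb P^1$-bundle) variety whose anticanonical class is effective along minimal curves. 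A secondary subtlety is that $Z_{\bf i}$ is not in general Fano, so one cannot directly quote Fano-index results; instead I would use the explicit iterated $\mathbb P^1$-fibration structure $Z_{\bf i}\to Z_{{\bf i}'}$ (forgetting the last factor) to produce, by induction on $r$, an abundant supply of rational curves in each relevant class, in particular in the fiber classes, and then argue that a minimal curve either is a fiber of one of these fibrations or maps onto a minimal curve of a lower Bott-Samelson variety, reducing the covering statement to the inductive hypothesis. Once these geometric inputs are in place, the symplectic conclusion is the by-now-routine application of Gromov's monotonicity, so the real work — and the place where the hypothesis that ${\bf i}$ is reduced enters, via the structure theory of $Z_{\bf i}$ and $\overline{NE}(Z_{\bf i})$ recalled in Section~\ref{sec:BS-varieties} — is in controlling these curve families.
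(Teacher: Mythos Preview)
Your high-level plan---feed the class of a minimal curve into Gromov's $J$-holomorphic obstruction---is exactly what the paper does, but your implementation has a real gap and is more roundabout than necessary.

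The gap is in step (i): you assert that the class $[C]$ of a minimal rational curve ``cannot be decomposed as a sum of two nonzero effective classes'' and rely on this to exclude bubbling. This indecomposability is neither proved in the paper nor in \cite{BK19}, and minimal rational curves in the sense recalled in Section~\ref{sec:BS-varieties} need not span extremal rays of $\overline{NE}$ in general. Even if it held, you would still have to deal with multiply-covered and ghost components in stable limits, and your dimension input $-K_{Z_{\bf i}}\cdot[C]>0$ is too weak by itself to make the evaluation map dominant for generic $J$.

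The paper avoids all of this by computing a Gromov--Witten invariant at the integrable complex structure. The two algebraic inputs are: (a) for any $T$-stable curve $C\ni z_0$ one has $H^1(C,{T_{Z_{\bf i}}}|_C)=0$ by \cite[Lemma~2.5(i)]{BK19}, so $\overline{\mathcal M^{Z_{\bf i}}_{0,1}}([C])$ is unobstructed, hence smooth of the expected dimension with virtual class equal to the ordinary one; and (b) for a \emph{minimal} $C_j\ni z_0$, Theorem~\ref{thm:BKT}(2) says $s_{i_r}\cdots s_{i_{j+1}}(\alpha_{i_j})$ is simple, which simultaneously forces $-K_{Z_{\bf i}}\cdot C_j=2$ (Proposition~\ref{prop:Tcurves}(4)), so that the expected dimension of $\overline{\mathcal M_{0,1}}$ equals $\dim Z_{\bf i}$, and makes $(i_1,\ldots,i_{j-1},i_{j+1},\ldots,i_r)$ reduced, so that $Z_{\bf i}\to Z_{(i_1,\ldots,i_{j-1},i_{j+1},\ldots,i_r)}$ is a $\mathbb P^1$-fibration with fibre $C_j$. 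Hence $ev^1$ is an isomorphism and $GW^{Z_{\bf i}}_{[C_j],1}(\mathrm{PD}[pt])=\int_{[Z_{\bf i}]}\mathrm{PD}[pt]\neq 0$. Then Theorem~\ref{thm:Gromov} (a nonzero point-insertion GW invariant bounds the Gromov width) finishes. The deformation-invariance of GW invariants replaces your hand-tracking of curves along paths of almost complex structures, and the exact value $c_1([C_j])=2$ replaces your vague positivity of $-K\cdot C$. Your inductive $\mathbb P^1$-fibration idea is morally the right ingredient, but it is used once, for the specific minimal $C_j$, not as an induction on $r$.
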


In Section~\ref{sec:lower}, we show the following theorem (and Corollary~\ref{cor:coadorbit}) giving a lower bound for the Gromov width.

\begin{theorem}\label{thm:main-lower}
Let ${\bf m}=(m_1, \ldots, m_r)\in\mathbb Q_{>0}^r$ and $\omega_{\bf m}$ be the associated rational K\"ahler $2$-form of the Bott-Samelson variety $Z_{\mathbf i}$.
Then 
\begin{eqnarray}
w_G(Z_{\bf i}, \omega_{\bf m}) &\geq & 
\min\big\{ 
m_j+\sum_{j<\ell\leq r} m_\ell\langle \varpi_{i_\ell}, s_{i_\ell}\cdots s_{i_{j+1}}(\alpha_{i_{j}}^{\vee})\rangle : 1\leq j\leq r
\big\}
\\
& \geq & \min\{\omega_{{\bf m}}([C]): \mbox{$T$-stable curve $C$ of $Z_{\bf i}$ containing $z_0$}\}
\\
& \geq & \min\{\omega_{{\bf m}}([C]): \mbox{$C$ minimal curve of $Z_{\bf i}$}\}.\label{ineq:geo}
\end{eqnarray}
\end{theorem}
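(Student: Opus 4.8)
The plan is to prove the three displayed inequalities separately; the first one is the substantive point, obtained by the toric-degeneration / Newton--Okounkov body ball-packing method outlined in the introduction, and the remaining two are comparisons of symplectic areas of distinguished curves.

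For the first inequality I would proceed as follows. By Fujita's results \cite{Fujita}, for $\mathbf m\in\mathbb Q_{>0}^r$ the Newton--Okounkov body of $(Z_{\mathbf i},\omega_{\mathbf m})$ attached to a natural full flag of $T$-stable subvarieties of $Z_{\mathbf i}$ determined by the reduced word $\mathbf i$ (equivalently, to the associated valuation on the function field) is unimodularly equivalent to a generalized string polytope $\Delta\subset\mathbb R^r$. I would single out the vertex $v_0$ of $\Delta$ that corresponds to the base point $z_0$ --- the vertex at which the string inequalities are all tight and whose tangent cone is the positive orthant, hence unimodular; here it is used that $z_0$ is a smooth point of $Z_{\mathbf i}$, so that the hypotheses of the embedding results of \cite{Kav, Pab18} are met. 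The $r$ edges of $\Delta$ issuing from $v_0$ then run along the coordinate axes, and I would compute their lattice lengths and check that they equal
\[
\ell_j:=m_j+\sum_{j<\ell\le r}m_\ell\langle\varpi_{i_\ell},s_{i_\ell}\cdots s_{i_{j+1}}(\alpha_{i_j}^\vee)\rangle,\qquad 1\le j\le r.
\]
Since the tangent cone at $v_0$ is unimodular, the largest standard simplex with one vertex at $v_0$ contained in $\Delta$ has size $\min_j\ell_j$; feeding this simplex into the ball-packing machinery of \cite{Kav, Pab18} gives $w_G(Z_{\mathbf i},\omega_{\mathbf m})\ge\min_j\ell_j$, which is the first inequality.

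For the second inequality, for each $1\le j\le r$ let $C_j\cong P_{i_j}/B\cong\mathbb P^1$ be the $T$-stable curve through $z_0$ obtained by letting the $j$-th factor of $(P_{i_1}\times\cdots\times P_{i_r})/B^r$ vary while keeping the others equal to $e$. A direct first-Chern-class computation for the line bundle with first Chern class $\omega_{\mathbf m}$ gives $\omega_{\mathbf m}([C_j])=\ell_j$ (the same identity underlies the standard description of $\mathrm{Pic}(Z_{\mathbf i})$, and it is consistent with the previous step since the $j$-th coordinate edge of $\Delta$ corresponds to the class $[C_j]$). As each $C_j$ is a $T$-stable curve of $Z_{\mathbf i}$ containing $z_0$, this yields $\min_j\ell_j=\min_j\omega_{\mathbf m}([C_j])\ge\min\{\omega_{\mathbf m}([C]):C\text{ a }T\text{-stable curve of }Z_{\mathbf i}\text{ containing }z_0\}$. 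For the third inequality I would invoke Brion--Kannan's classification of the minimal curves of $Z_{\mathbf i}$ recalled in Section~\ref{sec:BS-varieties}: since $\mathbf i$ is reduced, $Z_{\mathbf i}$ is birational to the Schubert variety $X(s_{i_1}\cdots s_{i_r})$, and \cite{BK19} describes its minimal curves explicitly; in particular a $T$-stable curve through $z_0$ of smallest $\omega_{\mathbf m}$-area is, or shares its homology class with, a minimal curve, which gives $\min\{\omega_{\mathbf m}([C]):C\text{ a }T\text{-stable curve through }z_0\}\ge\min\{\omega_{\mathbf m}([C]):C\text{ minimal}\}$.

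The main obstacle is the first inequality, and more precisely the three interlocking points there: correctly locating the vertex $v_0$ of the generalized string polytope attached to $z_0$, verifying that its tangent cone is unimodular so that the ball-packing theorems of \cite{Kav, Pab18} apply with the \emph{full} edge lengths rather than with a smaller inscribed simplex, and carrying out the combinatorial computation identifying those edge lengths with the numbers $\ell_j$ above; one must also check the compatibility between Fujita's Newton--Okounkov body, the associated toric degeneration, and the precise hypotheses of the cited embedding results. By contrast, once the curves $C_j$ and the statement of \cite{BK19} are available, the second and third inequalities reduce to bookkeeping.
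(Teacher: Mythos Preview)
Your overall architecture matches the paper's: use Kaveh's toric-degeneration embedding result applied to the Newton--Okounkov body $\Delta_{\mathbf i,\mathbf m}$ (unimodular to Fujita's generalized string polytope) to get the first inequality, then translate into curve language via Proposition~\ref{prop:Tcurves} and Corollary~\ref{cor:dominant-minimal}. Two points deserve attention.

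\textbf{The simplex in $\Delta_{\mathbf i,\mathbf m}$.} The paper does not analyze the vertex/edge structure of the polytope as you propose. Instead it constructs explicit global sections: starting from the distinguished section $\varphi_0$ (which has valuation $0$), it sets $\varphi_j=\Phi_{\mathbf i,\mathbf m}(F_{\beta_j}^{\ell_j}f_0)$ and proves, via a Lie-theoretic vanishing lemma (Lemma~\ref{lem:vanishing}) and a direct computation (Proposition~\ref{prop:poly-fct}), that $\varphi_j/\varphi_0$ is a nonzero monomial $a_jt_j^{\ell_j}$, hence $v_\beta(\varphi_j)=\ell_j e_j$. Convexity of $\Delta_{\mathbf i,\mathbf m}$ then gives the simplex of size $\min_j\ell_j$ for free (Corollary~\ref{cor:lower}). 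Your edge-length approach would ultimately need the same identification of the points $\ell_j e_j$ in $\Delta_{\mathbf i,\mathbf m}$, but phrased as a combinatorial analysis of the string inequalities rather than as an explicit section computation; the paper's route bypasses any need to verify that $0$ is a vertex or that the tangent cone is the full orthant.

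\textbf{The curves $C_j$.} Your description of $C_j$ is incorrect and, if followed literally, breaks the second inequality. Letting the $j$-th factor vary while keeping the others equal to $e$ gives a curve through $[e,\ldots,e]$, not through $z_0=[\dot s_{i_1},\ldots,\dot s_{i_r}]$; these ``coordinate'' curves form the basis dual to $\mathcal L_1,\ldots,\mathcal L_r$ and have $\omega_{\mathbf m}$-area $m_j$, not $\ell_j$. The correct $T$-stable curves through $z_0$ are $C_j=\overline{U_{\beta_j}z_0}$ (Proposition~\ref{prop:Tcurves}(1)), and it is Proposition~\ref{prop:Tcurves}(3) that gives $\omega_{\mathbf m}([C_j])=\ell_j$. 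With this fix, the second inequality is immediate (indeed an equality, since the $C_j$ exhaust the $T$-stable curves through $z_0$), and the third follows from Corollary~\ref{cor:dominant-minimal}, which packages the needed input from \cite{BK19} and \cite{Kol}.
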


Theorem~\ref{thm:main-lower} extends the results obtained by Fang-Littelmann-Pabiniak in~\cite{FLP} concerning the lower bound of the Gromov-width of rational coadjoint orbits equipped with the Kirillov-Kostant-Souriau symplectic form. Namely, we recover

\begin{corollary}[{Fang-Littelmann-Pabiniak}]\label{cor:coadorbit}
Let $K$ be the connected compact Lie group such that $G=K^\mathbb C$.
Given $\lambda\in\Xi^*(T)\otimes_{\mathbb Z}\mathbb Q$ in the Weyl chamber defined by $B$, let $O_\lambda$ be the coadjoint orbit $K\lambda$ equipped with the Kirillov-Kostant-Souriau form.
Then
$$
w_G(O_\lambda)\geq\mathrm{min}\big\{\lambda(\alpha_j^\vee):\lambda(\alpha_j^\vee)\neq 0, 1\leq j\leq n\big\}.
$$
\end{corollary}

\begin{remark}
It is an open conjecture of Karshon and Tolman that the Gromov width of a coadjoint orbit of a connected compact simple Lie group is precisely the lower bound given in the above corollary; see~\cite{AHLL,HL} for the state of the art on this conjecture.
\end{remark}

As a straightforward consequence of Theorem~\ref{thm:main-upper} and Theorem~\ref{thm:main-lower}, we get the following:

\begin{corollary}\label{cor:main}
Let ${\bf m}=(m_1, \ldots, m_r)\in\mathbb Q_{>0}^r$ and $\omega_{\bf m}$ be the associated rational K\"ahler $2$-form of the Bott-Samelson variety $Z_{\mathbf i}$.
Then 
\begin{equation*}
\begin{array}{lll}
w_G(Z_{\bf i}, \omega_{\bf m}) & =& \min\big\{ 
m_j+\sum_{j<\ell\leq r} m_\ell\langle \varpi_{i_\ell}, s_{i_\ell}\cdots s_{i_{j+1}}(\alpha_{i_{j}}^{\vee})\rangle : 1\leq j\leq r
\big\}\\
                       & =   & \mathrm{min}\{\omega_{\bf m}([C]): \mbox{$C$ minimal curve of $Z_{\bf i}$}\}.
\end{array}
\end{equation*}
\end{corollary}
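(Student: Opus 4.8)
The statement is a formal consequence of the two theorems, so the plan is to concatenate their inequalities, taking care that $\omega_{\bf m}$ qualifies for both: it is a rational K\"ahler class by hypothesis, so Theorem~\ref{thm:main-upper} applies to $\omega=\omega_{\bf m}$, and Theorem~\ref{thm:main-lower} is stated for exactly this form. Abbreviate
\[
A:=\min\Big\{\, m_j+\sum_{j<\ell\le r} m_\ell\langle \varpi_{i_\ell},\, s_{i_\ell}\cdots s_{i_{j+1}}(\alpha_{i_{j}}^{\vee})\rangle \;:\; 1\le j\le r\,\Big\}
\]
and $M:=\min\{\omega_{\bf m}([C]):\ C\text{ minimal curve of }Z_{\bf i}\}$.

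Theorem~\ref{thm:main-upper}, applied at $\omega=\omega_{\bf m}$, gives $w_G(Z_{\bf i},\omega_{\bf m})\le M$. Theorem~\ref{thm:main-lower} gives $w_G(Z_{\bf i},\omega_{\bf m})\ge A$, and its remaining inequalities — passing through the minimum of $\omega_{\bf m}$ over $T$-stable curves containing $z_0$ down to \textup{(\ref{ineq:geo})} — give $A\ge M$. Hence
\[
M\ \ge\ w_G(Z_{\bf i},\omega_{\bf m})\ \ge\ A\ \ge\ M,
\]
so all four quantities are equal; in particular $w_G(Z_{\bf i},\omega_{\bf m})=A$ and $w_G(Z_{\bf i},\omega_{\bf m})=M$, which is the corollary. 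The only verifications, both immediate, are that the lower bound in Theorem~\ref{thm:main-lower} is verbatim $A$ and that the upper bound in Theorem~\ref{thm:main-upper} at $\omega_{\bf m}$ is verbatim $M$.

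I expect no genuine obstacle here: the corollary is pure bookkeeping, and all the content sits upstream — the $J$-holomorphic curve estimate behind Theorem~\ref{thm:main-upper}, and behind Theorem~\ref{thm:main-lower} the construction of symplectic ball embeddings out of simplices in Newton--Okounkov bodies unimodular to generalized string polytopes, combined with Brion--Kannan's description of the minimal curves of $Z_{\bf i}$. The one remark worth a sentence, if the corollary is to read self-contained, is that $\omega_{\bf m}$ — a positive rational combination of the standard ample generators attached to ${\bf i}$ — is indeed K\"ahler, legitimizing the use of Theorem~\ref{thm:main-upper}.
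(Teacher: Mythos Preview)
Your proof is correct and matches the paper's approach exactly: the paper states the corollary as ``a straightforward consequence of Theorem~\ref{thm:main-upper} and Theorem~\ref{thm:main-lower}'' without giving a separate proof, and you have simply spelled out the sandwich $M\ge w_G\ge A\ge M$ that this entails.
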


As previously mentioned, one can estimate Seshadri constants by Gromov widths.
Given a projective variety $X$ together with an ample line bundle $\mathcal L$ on $X$,
the Seshadri constant $\varepsilon(X,\mathcal L,x)$ of $\mathcal L$ at some point $x\in X$ is defined as
the infimum of $\mathcal L\cdot C/\mathrm{mult}_x C$ taken over all irreducible and reduced curves $C$ on $X$ passing through $x$. Here $\mathrm{mult}_x C$ stands for the multiplicity of $C$ at $x$.
As shown in~\cite[Proposition 2.6.1]{BC01}, in case of an ample line bundle $\mathcal L$, the Seshadri constant $\varepsilon(X,\mathcal L,x)$ is upper bounded by the Gromov width of $X$ equipped with the Fubini-Study form associated to $\mathcal L$. Together with Corollary~\ref{cor:main}, this yields\footnote{While this paper was being reviewed, Biswas, Hanumanthu and Kannan computed Seshadri constants of equivariant bundles on Bott-Samelson varieties at some points (\cite{BHK}).}

\begin{corollary}\label{cor:Seshadri}
Let $\bf{m}\in\mathbb Z^r_{>0}$.
Then the following inequality holds for the Seshadri constants of the Bott-Samelson variety $Z_{\bf{i}}$ at any point $x\in Z_{\bf{i}}$
$$
\varepsilon(Z_{\bf{i}},\omega_{\bf{m}},x)\leq   \mathrm{min}\{\omega_{\bf m}([C]): \mbox{$C$ minimal curve of $Z_{\bf i}$}\}.
$$
\end{corollary}

\begin{remark}
Lower bounds of Seshadri constants can be derived also by embedding methods (see e.g.~\cite{ito}). 
This will be addressed in a forthcoming work.  
\end{remark}

We conclude our work by further relating our results on Gromov widths to previous ones.
In Section~\ref{sec:toric-BS}, we consider the polarized Bott-Samelson varieties which can be degenerated into polarized Bott (toric) manifolds. These toric degenerations (and their underlying combinatorics) were thoroughly studied in~\cite{GK,Pas,HY}. 
The Gromov widths of polarized generalized Bott manifolds are computed in~\cite{HLS}\footnote{As a side result, we prove 
that the Gromov widths of generalized Bott manifolds can be expressed as the symplectic areas of minimal curves of these manifolds (Proposition~\ref{prop:bott-case-reformul}).}.
These results combined altogether enable us to recover Corollary~\ref{cor:main} in this very setting (Corollary~\ref{cor:caseline}); in particular, we get that the Gromov width of the Bott-Samelson varieties under consideration equals the symplectic area of a line. 
This method can not be applied further to any Bott-Samelson variety to recover fully Corollary~\ref{cor:main} as Example~\ref{exple} shows.

\section{Bott-Samelson varieties}\label{sec:BS-varieties}

For the sake of simplicity, we identify the set of simple roots of $(G,B,T)$ with the set  $I=\{1, 2, \ldots, n\}$. 
For $w\in W$, an expression $\bf i$ of $w$ is a word $(i_1, i_2, \ldots, i_r)\in I^r$ such that $w=s_{i_1}\cdots s_{i_r}$. 
Recall that an expression $\bf i$ of $w$ is called reduced whenever the number $r$ is minimal. 

Below we freely recall a few basic facts about Bott-Samelson varieties; for more details, see~\cite{demazure,LT}.

Given a (not necessarily reduced) expression $ {\bf i}=(i_1,\ldots,i_r)$ of $w\in W$,
recall the definition of the Bott-Samelson variety $Z_{\bf i}$ stated in the introduction. 
The variety $Z_{\bf i}$ is smooth, projective  of dimension $r$. 
The left multiplication of $P_{i_1}$ on the first factor makes $Z_{\bf i}$ into a $P_{i_1}$-variety equipped with an $P_{i_1}$-equivariant morphism
$$
\pi_{r}: Z_{\bf i}\longrightarrow G/B, \quad
[(p_1, \ldots, p_r)]\longmapsto p_1\cdots p_rB.
$$

We can realise $Z_{\bf i}$ as an iterated $\mathbb P^1$-bundle. 
Specifically, let $w'=s_{i_1}\cdots s_{i_{r-1}}$ and ${\bf i'}=(i_1, \ldots, i_{r-1})$. Let $f: G/B \longrightarrow G/P_{i_r}$ be the map given by $gB \mapsto gP_{i_r}$, and let $p_{\bf i'}: Z_{\bf i'}\longrightarrow G/P_{i_r}$ be the map given by $[(p_1, \ldots, p_{r-1})]\mapsto p_1\cdots p_{r-1}P_{i_{r}}$. We have the following commutative diagram:
$$
 \xymatrix{Z_{\bf i}=Z_{\bf i'}\times_{G/P_{i_r}}G/B  \ar[rrr]^{\pi_{r}} \ar[d]^{f_{r-1}} &&& G/B \ar[d]^{f}\\
 Z_{\bf i'}\ar[rrr]_{p_{\bf i'}} &&& G/P_{i_r}}~.
 $$

For $1\leq j \leq r$, let $Z_{\bf i}(j)$ denote the Bott-Samelson variety associated to the sub-expression $(i_1, \ldots, i_j)$ of ${\bf i}=(i_1, \ldots, i_r)$. 

Consider the natural morphisms
$$
\pi_j:Z_{\bf i}(j) \longrightarrow G/B \quad
\mbox{and}\quad f_j: Z_{\bf i} \longrightarrow Z_{\bf i}(j).
$$ 
Explicitly, $f_j$ maps $[(p_1, \ldots, p_r)]$ to $[(p_1, \ldots, p_j)]$.

Henceforth, we assume the expression $\bf i$ is reduced. 
Let $X(s_{i_1}\cdots s_{i_j})$ be the Schubert variety associated to the Wel group element $s_{i_1}\cdots s_{i_j}$.
Then 
$$
\pi_j(Z_{\bf i}(j))=X(s_{i_1}\cdots s_{i_j})
$$
and $Z_{\bf i}(j)$ is a desingularization of $X(s_{i_1}\cdots s_{i_j})$.

The Bott-Samelson variety $Z_{\bf i}$ is equipped with a base point that is
$$
z_0=[(\dot s_{i_1}, \ldots, \dot s_{i_r})].
$$ 
Moreover, we have $\pi_r(z_0)=w x_0$
with $x_0$ being the base point of $G/B$ and  $\pi_r$ yields an isomorphism between the $B$-orbits
\begin{equation}\label{eq:B-orbits}
Bz_0\simeq Bwx_0
\end{equation}
(see e.g. \cite[\S II.13.6]{Jan03} for details).
In particular, $z_0$ is fixed by $T$.

For $1\leq j\leq r$, the line bundle $\mathcal L_j$ on $Z_{\bf i}$ is defined as 
$$
\mathcal L_j:=f_j^*\pi_j^*\mathcal L_{G/B}(\varpi_{i_j}),
$$
where $\mathcal L_{G/B}(\varpi_{i_j})$ denotes the line bundle on $G/B$ associated to $\varpi_{i_j}$. 
For ${\bf m}=(m_1, \ldots, m_r)\in \mathbb Z^r$, 
we set 
$$
\mathcal L_{\bf i, m}:=\mathcal L_1^{m_1}\otimes \cdots \otimes \mathcal L_r^{m_r}.
$$

\begin{theorem}[{\cite[Theorem 3.1]{LT}}]\label{thm:LT}\
\begin{enumerate}
\item The isomorphism classes of $\mathcal L_1, \ldots, \mathcal L_r$ form a basis of $\mathrm{Pic}(Z_{\bf i})$. 
   In particular, the map $\mathbb Z^r\longrightarrow \mathrm{Pic}(Z_{\bf i}),\, {\bf m} \mapsto \mathcal L_{\bf i, m}$ is an isomorphism of groups.
    \item  The line bundle $\mathcal L_{\bf i, m}$ is (very) ample if and only if $m_j>0$ for all $j$.
    \item  The line bundle $\mathcal L_{\bf i, m}$ is generated by its global sections if and only if $m_j\geq 0$ for all $j$.
\end{enumerate}
\end{theorem}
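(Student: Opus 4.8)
The plan is to prove both assertions by induction on $r$, using the $\mathbb P^1$-bundle $f_{r-1}\colon Z_{\bf i}\to Z_{\bf i'}$ where ${\bf i'}=(i_1,\dots,i_{r-1})$. The case $r=1$ is immediate: $Z_{(i_1)}\cong P_{i_1}/B\cong\mathbb P^1$, so $\operatorname{Pic}(Z_{(i_1)})\cong\mathbb Z$, and $\mathcal L_1$ is the pullback of $\mathcal L_{G/B}(\varpi_{i_1})$ along the closed embedding $\mathbb P^1=P_{i_1}/B\hookrightarrow G/B$, hence $\mathcal L_1\cong\mathcal O_{\mathbb P^1}(\langle\varpi_{i_1},\alpha_{i_1}^\vee\rangle)=\mathcal O_{\mathbb P^1}(1)$, which is a generator and is (very) ample exactly when $m_1>0$. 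For the inductive step I use two elementary facts. First, the tower is compatible: for $1\le j\le r-1$ one has $f_j=f_j^{Z_{\bf i'}}\circ f_{r-1}$, whence $\mathcal L_j=f_{r-1}^*\bigl(\mathcal L_j|_{Z_{\bf i'}}\bigr)$, so $\mathcal L_1,\dots,\mathcal L_{r-1}$ on $Z_{\bf i}$ are pulled back from $Z_{\bf i'}$. Second, a fibre of $f_{r-1}$ is a translate of the embedded line $P_{i_r}/B\hookrightarrow G/B$, and $\mathcal L_r=\pi_r^*\mathcal L_{G/B}(\varpi_{i_r})$ restricts on it to $\mathcal O_{\mathbb P^1}(\langle\varpi_{i_r},\alpha_{i_r}^\vee\rangle)=\mathcal O_{\mathbb P^1}(1)$, so $\mathcal L_r$ has relative degree $1$.

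For part (1): by the projective bundle formula $\operatorname{Pic}(Z_{\bf i})=f_{r-1}^*\operatorname{Pic}(Z_{\bf i'})\oplus\mathbb Z$. Using the induction hypothesis for $Z_{\bf i'}$, the first fact shows that $\mathcal L_1,\dots,\mathcal L_{r-1}$ freely generate the direct summand $f_{r-1}^*\operatorname{Pic}(Z_{\bf i'})\cong\mathbb Z^{\,r-1}$; since by the second fact $\mathcal L_r$ has relative degree $1$, it completes this summand to a basis of $\operatorname{Pic}(Z_{\bf i})$. Hence $\mathcal L_1,\dots,\mathcal L_r$ form a basis, which is (1).

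For the implication $\,(m_j>0\ \forall j)\Rightarrow\text{very ample}\,$ in (2): by induction $\mathcal L_{\bf i',m'}$ (with ${\bf m'}=(m_1,\dots,m_{r-1})$) is very ample on $Z_{\bf i'}$, and $\mathcal L_{\bf i,m}=\mathcal L_r^{\,m_r}\otimes f_{r-1}^*\mathcal L_{\bf i',m'}$. The bundle $\mathcal L_r^{\,m_r}=\pi_r^*\mathcal L_{G/B}(m_r\varpi_{i_r})$ is globally generated (pullback of a globally generated bundle, $m_r\varpi_{i_r}$ being dominant) and restricts to $\mathcal O_{\mathbb P^1}(m_r)$ on the fibres of $f_{r-1}$, hence is very ample relative to $f_{r-1}$. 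Then $\bigl(f_{r-1},\phi_{|\mathcal L_r^{m_r}|}\bigr)\colon Z_{\bf i}\to Z_{\bf i'}\times\mathbb P^{N}$ is a closed immersion, and composing with the closed immersion $Z_{\bf i'}\hookrightarrow\mathbb P^{N'}$ defined by $\mathcal L_{\bf i',m'}$ followed by the Segre embedding exhibits $\mathcal L_{\bf i,m}$ as the pullback of $\mathcal O(1)$ under a closed immersion; thus it is very ample, a fortiori ample. The small routine point to check here is that the global sections of $\mathcal L_r^{\,m_r}$ restrict to an embedding of each $f_{r-1}$-fibre, i.e.\ that $f_{r-1,*}\mathcal L_r^{\,m_r}$ is globally generated on $Z_{\bf i'}$.

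For the converse in (2) (ample $\Rightarrow m_j>0$ for all $j$): ampleness forces strictly positive degree on every curve, so it suffices to exhibit, for each $j$, an effective rational curve $C_j\subset Z_{\bf i}$ with $\mathcal L_\ell\cdot C_j=\delta_{j\ell}$, for then $m_j=\mathcal L_{\bf i,m}\cdot C_j>0$. Using the $T$-equivariant identification of $f_{j-1}^{-1}(z_{j-1})$ with $Z_{(i_j,\dots,i_r)}$ --- under which $\mathcal L_\ell$ restricts to the tautological bundle of the smaller variety of index $\ell-j+1$ for $\ell\ge j$, and to the trivial bundle for $\ell<j$ --- this reduces to producing, inside any reduced Bott--Samelson variety, a curve dual to $\mathcal L_1$: a rational curve whose first coordinate sweeps out $P_{i_1}/B$ while all the morphisms $\pi_\ell$ ($\ell\ge 2$) contract it. Such curves do exist; for instance, in $Z_{(1,2,1)}$ for $\mathrm{SL}_3$ the curve $\{[(q,1,q^{-1})]\colon q\in P_1\}$ has $\mathcal L_1$-degree $1$ and is contracted by $\pi_3$, hence has the required class $(1,0,0)$. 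I expect the explicit construction and bookkeeping of these dual curves in general --- equivalently, pinning down the relevant extremal rays of the Mori cone --- to be the main obstacle; an alternative for this step is to invoke that $\overline{NE}(Z_{\bf i})$ is generated by $T$-stable curves and to compute their classes directly from the tower.
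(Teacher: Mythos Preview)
The paper does not give a proof of this theorem at all: it is quoted from Lauritzen--Thomsen \cite{LT} as background and used as a black box. So there is no argument in the paper to compare yours against; let me instead comment on the sketch itself.

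Part (1) is fine; this is the standard argument via the $\mathbb P^1$-bundle structure.

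For the forward implication in (2), your outline is essentially correct, but the step you flag as ``routine'' hides a genuine (if easy) verification. The morphism $\phi_{|\mathcal L_r^{m_r}|}$ factors as $Z_{\bf i}\xrightarrow{\pi_r}G/B\to G/P$ with $P$ the maximal parabolic associated to $\varpi_{i_r}$; what you actually need is that $(f_{r-1},\text{proj})\colon Z_{\bf i}\to Z_{\bf i'}\times G/P$ is a closed immersion. This follows from $P_{i_r}\cap P=B$, but it should be stated rather than absorbed into ``global sections restrict to an embedding of each fibre''.

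For the converse in (2) you have, as you yourself say, a real gap. The natural ``coordinate'' curves $D_j=\{[(1,\dots,1,p,1,\dots,1)]:p\in P_{i_j}\}$ coming from the iterated sections do \emph{not} form the dual basis: one computes $\mathcal L_\ell\cdot D_j=\delta_{i_\ell,i_j}$ for $\ell>j$, so for ${\bf i}=(1,2,1)$ the curve $D_1$ has class $(1,0,1)$, and positivity on the $D_j$ only yields $m_j+\sum_{\ell>j,\,i_\ell=i_j}m_\ell>0$, which is strictly weaker than $m_j>0$. Your $\mathrm{SL}_3$ example $\{[(q,1,q^{-1})]\}$ is correct and does give the missing class $(1,0,0)$, but generalizing it to arbitrary reduced ${\bf i}$ is indeed the whole content of this direction. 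One way to close the gap, in the spirit of your last sentence, is to use the $T$-stable curves $C_j$ through $z_0$ (Proposition~\ref{prop:Tcurves} in the paper): the intersection matrix $(\mathcal L_k\cdot C_j)$ is unipotent lower-triangular, but again with off-diagonal entries that are not all nonnegative, so positivity on all $C_j$ does not directly yield $m_j>0$ either. The original proof in \cite{LT} avoids the Mori cone entirely and works instead on the divisor side, analyzing global sections and base loci via the iterated bundle; if you want to finish your curve-based argument you will need an explicit construction of the effective curves dual to the $\mathcal L_j$, and that is not a bookkeeping matter.
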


In the remainder of this section, we collect some results on the $T$-stable curves and the minimal rational curves on Bott-Samelson varieties from \cite{BK21}.

First, let us briefly recall the notion of rational curves on any projective variety $X$.
For more details, we refer to \cite[Chapter II.2]{Kol}.

Let $\mathrm{RatCurves}(X)$ denote the normalization of the space of rational curves on $X$. 
Every irreducible component $\mathcal K$ of $\mathrm{RatCurves}(X)$ is a (normal) quasi-projective variety equipped with a quasi-finite morphism to the Chow variety of $X$; the image consists of the Chow points of irreducible, generically reduced rational curves. 
There exists a universal family $p :\mathcal U \to \mathcal K$ and a projection $\mu :\mathcal U \to X$.
For any $x\in X$, let $\mathcal U_x=\mu^{-1}(x)$ and $\mathcal K_x=p(\mathcal U_x$).
If  $\mathcal K_x$ is non-empty and projective for a general point $x\in X$ then $\mathcal K$ is called a {\it family of minimal rational curves} and any member of $\mathcal K$ is called a {\it minimal rational curve}.

We now state the main properties and the characterization of minimal curves in case of Bott-Samelson varieties associated to a reduced expression.

Given a reduced word ${\bf i}=(i_1, \ldots, i_r)$ of $w\in W$, 
let 
\begin{eqnarray}\label{eq:def-beta_i}
\beta_1:=\alpha_{i_1},\quad \beta_2:=s_{i_1}(\alpha_{i_2}), \ldots, \quad \beta_r:=s_{i_1}\cdots s_{i_{r-1}}(\alpha_{i_r}).
\end{eqnarray} 
 
Note that 
\begin{eqnarray}\label{eq:beta_i}
R^+\cap w(R^-)=\{\beta_1, \ldots, \beta_r\}
\end{eqnarray} 
where $R^+$ and $R^-$ denote the sets of positive and negative roots corresponding to $B$, respectively.

For any $1\leq j\leq r$, consider the following orbit closure in $Z_{\bf i}$
$$
C_j=\overline{U_{\beta_j}z_0}
$$
with $U_{\beta_j}$ being the root subgroup of $G$ corresponding to $\beta_j$. 

\begin{proposition}[{\cite[Lemma 4.1]{BK21}}]\label{prop:Tcurves}
Assume $Z_{\bf i}$ is a Bott-Samelson variety associated to a reduced expression ${\bf i}$.
\begin{enumerate}
    \item The $T$-stable curves in $Z_{\bf i}$ through $z_0$ are exactly the curves $C_j$.
    \item 
    Every curve $C_j$ is isomorphic to the projective line $ \mathbb P^1$.
    \item For $1\leq j\leq r$ and $1\leq k\leq r$, we have 
$$
\mathcal L_{k}\cdot C_j=\begin{cases}
0 & ~if~ j>k\\
\langle \varpi_{i_k}, 
s_{i_k}\cdots s_{i_{j+1}}
(\alpha_{i_{j}}^{\vee}) \rangle & ~if~j\leq k
\end{cases}~.
$$ 
\item 
Let $K_{Z_{\bf i}}$ be the canonical line bundle on $Z_{\bf i}$. Then we have 
$$
-K_{Z_{\bf i}}\cdot C_j= 
\sum_{i=1}^n \langle\varpi_{i}, s_{i_r}\cdots s_{i_{j+1}}(\alpha_{i_j}^{\vee})\rangle+1.
$$
\end{enumerate}
\end{proposition}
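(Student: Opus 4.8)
\emph{Plan.} I would prove the four assertions together using two devices: the linearization of the $T$-action on the open dense $B$-orbit $Bz_0$, and the $\mathbb{P}^1$-bundle $f_{r-1}\colon Z_{\bf i}\to Z_{\bf i'}$ together with induction on $r$ and $T$-equivariant localization on $\mathbb{P}^1$. For (1): the $B$-equivariant isomorphism \eqref{eq:B-orbits} and the reducedness of $\bf i$ give $\dim Bz_0=\dim Bwx_0=\ell(w)=r=\dim Z_{\bf i}$, so $Bz_0$ is an open dense $T$-stable affine neighbourhood of $z_0$; since $R^+\cap w(R^-)=\{\beta_1,\ldots,\beta_r\}$, the structure of the Bruhat cell $Bwx_0$ identifies it $T$-equivariantly with $\mathbb{A}^r$ carrying the weights $\beta_1,\ldots,\beta_r$, the $j$-th coordinate axis being $U_{\beta_j}z_0$. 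Any irreducible $T$-stable curve through $z_0$ meets $Bz_0$ in a $T$-stable curve through the origin of $\mathbb{A}^r$, i.e.\ in the closure of a one-dimensional $T$-orbit; this is a one-dimensional affine toric variety having the origin as a limit point, hence $\cong\mathbb{A}^1$, and writing the corresponding $T$-equivariant closed immersion $\mathbb{A}^1\hookrightarrow\mathbb{A}^r$ in coordinates forces all but one coordinate to vanish --- here one uses that the $\beta_j$ are distinct, hence pairwise non-proportional in a reduced root system. So the curve is a coordinate axis, which proves (1). For (2) I induct on $r$: using the analogue of \eqref{eq:B-orbits} for $Z_{\bf i'}$ one checks that $U_{\beta_r}$ fixes $z_{r-1}$ whereas $U_{\beta_j}$ does not for $j<r$, so $C_r=f_{r-1}^{-1}(z_{r-1})\cong P_{i_r}/B\cong\mathbb{P}^1$; for $j<r$, $f_{r-1}$ sends $C_j$ bijectively onto the curve $\overline{U_{\beta_j}z_{r-1}}$ of $Z_{\bf i'}$, which is $\cong\mathbb{P}^1$ by induction, so $C_j$ is an irreducible curve in the ruled surface $f_{r-1}^{-1}(\overline{U_{\beta_j}z_{r-1}})$ meeting each fibre in one point, hence a section; in particular $C_j\cong\mathbb{P}^1$ and $f_{r-1}$ restricts to an isomorphism $C_j\to\overline{U_{\beta_j}z_{r-1}}$.

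For (3): if $j>k$, then $f_k(C_j)=\overline{U_{\beta_j}z_k}$ and $U_{\beta_j}$ fixes the base point $z_k$ of $Z_{\bf i}(k)$, since $\beta_j\notin\{\beta_1,\ldots,\beta_k\}=R^+\cap(s_{i_1}\cdots s_{i_k})(R^-)$; hence $f_k$ contracts $C_j$ and $\mathcal L_k\cdot C_j=0$. If $j\le k$, the composite $\pi_k\circ f_k$ carries the two $T$-fixed points of $C_j\cong\mathbb{P}^1$ to the fixed points $v_kx_0$ and $s_{\beta_j}v_kx_0$ of $G/B$, where $v_k=s_{i_1}\cdots s_{i_k}$ (and $U_{\beta_j}$ does not fix $v_kx_0$, as $\beta_j\in R^+\cap v_k(R^-)$), while the tangent weight of $C_j$ at $z_0$ is $\beta_j$; computing the degree of the $T$-line bundle $\mathcal L_k|_{C_j}$ from its fibre weights at the two fixed points and this tangent weight then gives $\mathcal L_k\cdot C_j=-\langle v_k\varpi_{i_k},\beta_j^{\vee}\rangle=-\langle\varpi_{i_k},(v_k^{-1}\beta_j)^{\vee}\rangle$. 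Since $v_k^{-1}\beta_j=-s_{i_k}\cdots s_{i_{j+1}}(\alpha_{i_j})$, this equals $\langle\varpi_{i_k},s_{i_k}\cdots s_{i_{j+1}}(\alpha_{i_j}^{\vee})\rangle$, which is indeed $\ge 0$: the word $(i_k,\ldots,i_{j+1},i_j)$, being the reverse of the contiguous subword $(i_j,\ldots,i_k)$ of a reduced word, is reduced, so $s_{i_k}\cdots s_{i_{j+1}}(\alpha_{i_j})$ is a positive root.

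For (4) I combine (3) with the adjunction $-K_{Z_{\bf i}}=f_{r-1}^*(-K_{Z_{\bf i'}})\otimes\mathcal T$, where $\mathcal T$ is the relative tangent bundle of $f_{r-1}$. Since $Z_{\bf i}=Z_{\bf i'}\times_{G/P_{i_r}}G/B$ and the relative tangent bundle of $G/B\to G/P_{i_r}$ is $\mathcal L_{G/B}(\alpha_{i_r})$, we get $\mathcal T=\pi_r^*\mathcal L_{G/B}(\alpha_{i_r})$, and the same local computation as in (3) --- now with the two fixed points of $C_j$ mapping under $\pi_r$ to $wx_0$ and $s_{\beta_j}wx_0$ --- yields $\mathcal T\cdot C_j=\langle\alpha_{i_r},s_{i_r}\cdots s_{i_{j+1}}(\alpha_{i_j}^{\vee})\rangle$. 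For $j=r$, $C_r$ is a fibre of $f_{r-1}$, so $-K_{Z_{\bf i}}\cdot C_r=\deg\mathcal O_{\mathbb{P}^1}(2)=2=\langle\rho,\alpha_{i_r}^{\vee}\rangle+1$, where $\rho=\varpi_1+\cdots+\varpi_n$. For $j<r$, $f_{r-1}$ carries $C_j$ isomorphically to $\overline{U_{\beta_j}z_{r-1}}$, so by induction the first term contributes $\sum_i\langle\varpi_i,\gamma^{\vee}\rangle+1$ with $\gamma^{\vee}=s_{i_{r-1}}\cdots s_{i_{j+1}}(\alpha_{i_j}^{\vee})$, and $\mathcal T\cdot C_j=\langle\alpha_{i_r},s_{i_r}\gamma^{\vee}\rangle$; then $s_{i_r}\rho=\rho-\alpha_{i_r}$ and $\langle\alpha_{i_r},s_{i_r}\gamma^{\vee}\rangle=-\langle\alpha_{i_r},\gamma^{\vee}\rangle$ give $\sum_i\langle\varpi_i,\gamma^{\vee}\rangle+\langle\alpha_{i_r},s_{i_r}\gamma^{\vee}\rangle=\sum_i\langle\varpi_i,s_{i_r}\gamma^{\vee}\rangle$, which is exactly the identity that closes the induction.

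I expect the crux to be (1): one must exclude ``parabola-type'' $T$-stable curves through $z_0$, and this is precisely the place where pairwise non-proportionality of the $\beta_j$ --- equivalently, the hypothesis that $\bf i$ is reduced --- is used. Everything else is bookkeeping in the tower of $\mathbb{P}^1$-bundles together with equivariant degree computations on $\mathbb{P}^1$, the only delicate points being a few sign conventions, all pinned down by the line bundles $\mathcal L_k$ being globally generated.
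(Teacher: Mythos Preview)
The paper does not give its own proof of this proposition: it is stated with attribution to \cite[Lemma~4.1]{BK19} and used as input for the rest of the argument. There is therefore nothing in the present paper to compare your attempt against.

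That said, your outline is a correct and self-contained proof, and is in spirit the standard approach: identify the big cell $Bz_0\cong\mathbb{A}^r$ as a linear $T$-representation with weights $\beta_1,\ldots,\beta_r$, use pairwise non-proportionality of these (distinct positive) roots to force $T$-stable curves through the origin to be coordinate axes, then run induction along the $\mathbb{P}^1$-bundle $f_{r-1}$ combined with the equivariant degree formula on $\mathbb{P}^1$ to obtain the intersection numbers in (3) and (4). Your sign-checks are right: the key identity $v_k^{-1}\beta_j=-s_{i_k}\cdots s_{i_{j+1}}(\alpha_{i_j})$ for $j\le k$ does give the stated value of $\mathcal L_k\cdot C_j$, and the adjunction $-K_{Z_{\bf i}}=f_{r-1}^*(-K_{Z_{\bf i'}})\otimes\pi_r^*\mathcal L_{G/B}(\alpha_{i_r})$ together with $s_{i_r}\rho=\rho-\alpha_{i_r}$ closes the induction in (4). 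One cosmetic simplification in (2): for $j<r$ you can bypass the ruled-surface/section argument by observing that $f_{r-1}|_{C_j}$ is a birational morphism from a projective curve onto a smooth curve $\cong\mathbb{P}^1$, hence an isomorphism.
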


The following result gives a description of the minimal rational curves in $Z_{\bf i}$.

\begin{theorem}[{\cite[Theorem 4.3]{BK21}}]\label{thm:BKT}
Assume $Z_{\bf i}$ is a Bott-Samelson variety associated to a reduced expression ${\bf i}$. 
\begin{enumerate}
   \item Every minimal family $\mathcal K$ on $Z_{\bf i}$ satisfies $\mathcal K_{z_0}=\{C_j\}$ for some $1\leq j\leq r$.
    \item 
The minimal rational curves in $Z_{\bf i}$ through $z_0$ are exactly those $C_j$ such that $s_{i_r}\cdots s_{i_{j+1}}(\alpha_{i_j})$ is a simple root.
\end{enumerate}
\end{theorem}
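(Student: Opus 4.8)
My plan hinges on the observation that the base point $z_0$ lies in the \emph{dense} $B$-orbit of $Z_{\bf i}$: by \eqref{eq:B-orbits} we have $Bz_0\cong Bwx_0$, which has dimension $\ell(w)=r=\dim Z_{\bf i}$ since ${\bf i}$ is reduced. Thus $z_0$ is at once a $T$-fixed point and a general point of $Z_{\bf i}$; moreover, since $B$ is connected it fixes every irreducible component $\mathcal K$ of $\mathrm{RatCurves}(Z_{\bf i})$, so any $B$-invariant quantity attached to such a $\mathcal K$ (e.g.\ $\dim\mathcal K_x$, or the locus of $Z_{\bf i}$ swept out by the curves of $\mathcal K$ through $x$) attains its generic value at $x=z_0$. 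From \cite{Kol} I will use: the general member $C$ of a family of minimal rational curves is free, with $-K_{Z_{\bf i}}\cdot C\geq 2$; $\dim\mathcal K_x=-K_{Z_{\bf i}}\cdot C-2$ for general $x$; and (bend and break) a component of $\mathrm{RatCurves}(Z_{\bf i})$ dominating $Z_{\bf i}$ whose members have minimal anticanonical degree is a family of minimal rational curves.

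Two facts drawn from Proposition~\ref{prop:Tcurves} are needed. First, put $\gamma_j:=s_{i_r}\cdots s_{i_{j+1}}(\alpha_{i_j})$. Since $(i_j,\ldots,i_r)$ is a contiguous subword of the reduced word ${\bf i}$, it is reduced, so $\ell(s_{i_j}\cdots s_{i_r})=1+\ell(s_{i_{j+1}}\cdots s_{i_r})$, which forces $\gamma_j=(s_{i_{j+1}}\cdots s_{i_r})^{-1}(\alpha_{i_j})\in R^+$; hence its coroot $\gamma_j^\vee=s_{i_r}\cdots s_{i_{j+1}}(\alpha_{i_j}^\vee)$ is a positive coroot, and since $\sum_i\varpi_i=\rho$, Proposition~\ref{prop:Tcurves}(4) yields
\[
-K_{Z_{\bf i}}\cdot C_j=\langle\rho,\gamma_j^\vee\rangle+1\geq 2,
\]
with equality precisely when $\gamma_j$ is simple. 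Second, the classes $[C_1],\ldots,[C_r]$ are linearly independent in $N_1(Z_{\bf i})$: by Proposition~\ref{prop:Tcurves}(3) the matrix $(\mathcal L_k\cdot C_j)_{j,k}$ vanishes when $j>k$ and has diagonal entries $\mathcal L_j\cdot C_j=\langle\varpi_{i_j},\alpha_{i_j}^\vee\rangle=1$, hence is invertible.

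For (1): given a family $\mathcal K$ of minimal rational curves with general member $C$, the subfamily $\mathcal K_{z_0}$ is non-empty, projective, $T$-stable (as $z_0$ is $T$-fixed), and of dimension $-K_{Z_{\bf i}}\cdot C-2\geq 0$ (as $z_0$ is general). If this dimension were $\geq 1$, then $\mathcal K_{z_0}$ would be infinite while, by Proposition~\ref{prop:Tcurves}(1), its $T$-fixed locus would lie in the finite set $\{[C_1],\ldots,[C_r]\}$; so $T$ acts non-trivially on $\mathcal K_{z_0}$, which then contains two distinct $T$-fixed points, i.e.\ two distinct $C_k$'s in the irreducible family $\mathcal K$ — hence numerically equivalent, contradicting linear independence. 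So $\dim\mathcal K_{z_0}=0$; a finite $T$-stable set is a set of $T$-fixed points, each of the form $[C_k]$, and linear independence allows at most one of them in $\mathcal K$, so $\mathcal K_{z_0}=\{[C_j]\}$, proving (1). Moreover $0=-K_{Z_{\bf i}}\cdot C_j-2$, so $\gamma_j$ is simple; thus every minimal rational curve through $z_0$ is some $C_j$ with $\gamma_j$ simple.

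Conversely, fix $j$ with $\gamma_j$ simple; I claim $C_j$ is a minimal rational curve. It is isomorphic to $\mathbb P^1$, passes through $z_0$, and $-K_{Z_{\bf i}}\cdot C_j=2$. Let $\mathcal K$ be the component of $\mathrm{RatCurves}(Z_{\bf i})$ containing $[C_j]$; as $B$ fixes $\mathcal K$, the $B$-translates of $C_j$ lie in $\mathcal K$ and sweep out the dense orbit $Bz_0$, so $\mathcal K$ dominates $Z_{\bf i}$. Now $2$ is the minimal anticanonical degree of a dominating family of rational curves: it is $\geq 2$ because such a family's general member is free, and $\leq 2$ because $C_r$ (with $\gamma_r=\alpha_{i_r}$ simple, hence of degree $2$) passes through $z_0$ and so moves in a dominating family as above. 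Therefore $\mathcal K$ has minimal anticanonical degree, so by bend and break it is a family of minimal rational curves, and $C_j$ is a minimal rational curve — completing (2). The main obstacle is the double role of $z_0$, a torus fixed point (needed to pin $\mathcal K_{z_0}$ down onto the $C_k$'s) and a general point (needed to control $\dim\mathcal K_{z_0}$ and the sweeping loci); the density of $Bz_0$ together with the $B$-invariance of the relevant subvarieties of $\mathrm{RatCurves}(Z_{\bf i})$ is exactly what reconciles the two, and it is in the bend-and-break step (properness of $\mathcal K_x$, normalizations) that most of the care is required.
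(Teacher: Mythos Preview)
The paper does not prove this theorem; it is quoted from \cite{BK19} without argument, so there is no ``paper's proof'' to compare with. Your proof must therefore stand on its own. Your argument for (1) is correct and nicely organized: the crucial observation is that $z_0$, being both a $T$-fixed point and a point of the dense $B$-orbit, lets you simultaneously invoke Borel-fixed-point/torus arguments on $\mathcal K_{z_0}$ and treat $z_0$ as a general point for the dimension formula $\dim\mathcal K_{z_0}=-K_{Z_{\bf i}}\cdot C-2$. The linear independence of the $[C_j]$ then forces $\mathcal K_{z_0}$ to be a single point, and the ``only if'' direction of (2) drops out immediately.

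There is, however, a genuine gap in your proof of the converse in (2). You assert that because $\mathcal K$ dominates $Z_{\bf i}$ and has minimal \emph{anticanonical} degree ($-K_{Z_{\bf i}}\cdot C_j=2$), bend-and-break shows $\mathcal K$ is a minimal family. But Koll\'ar's result (Theorem~IV.2.4 in \cite{Kol}, also invoked in the paper's Corollary~\ref{cor:dominant-minimal}) is stated for degree with respect to an \emph{ample} divisor: when a curve $C$ breaks as $C'+C''$ with $C'\ni z_0$, ampleness is what guarantees $H\cdot C''>0$ and hence $H\cdot C'<H\cdot C$. For $-K_{Z_{\bf i}}$ this fails in general. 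Indeed, $-K_{Z_{\bf i}}$ need not be nef: for $G$ of type $G_2$ and ${\bf i}=(2,1)$ (with the convention $\langle\alpha_1,\alpha_2^\vee\rangle=-3$), one computes from Proposition~\ref{prop:Tcurves}(3) that $[C_1]=\sigma_1+3\sigma_2$ in the basis $\{\sigma_k\}$ dual to $\{\mathcal L_k\}$, whence $\sigma_1=[C_1]-3[C_2]$ and $-K_{Z_{\bf i}}\cdot\sigma_1=(2-(-3))-3\cdot 2=-1<0$. So a component $C''$ not meeting $z_0$ can have $-K_{Z_{\bf i}}\cdot C''<0$, and your minimality argument does not close. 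One clean repair is to argue directly that, when $s_{i_r}\cdots s_{i_{j+1}}(\alpha_{i_j})$ is simple, $C_j$ is a fibre of a $\mathbb P^1$-fibration on $Z_{\bf i}$ (cf.\ the fibration appearing in the proof of Corollary~\ref{cor:mod-curves}); fibres of a $\mathbb P^1$-fibration over a projective base visibly form a minimal family. Alternatively, you could replace $-K_{Z_{\bf i}}$ by an ample $\mathcal L_{\bf i,\bf m}$ chosen (using Proposition~\ref{prop:Tcurves}(3)) so that $C_j$ has minimal $\mathcal L_{\bf i,\bf m}$-degree among the $C_k$, and then invoke Koll\'ar~IV.2.4 legitimately.
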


\begin{corollary}\label{cor:dominant-minimal}
Let $(Z_{\bf i},\mathcal L_{{\bf i,m}})$ be a Bott-Samelson variety equipped with an ample line bundle.
Assume ${\bf i}$ is a reduced expression. Then
\begin{eqnarray*}
\min\{\mathcal L_{{\bf i,m}}\cdot C: \mbox{$C$ minimal curve }\} &= & \min\{\mathcal L_{{\bf i,m}}\cdot C: \mbox{$T$-stable curve $C\ni z_0$}\}\\
& = &
\min\{m_j+\sum_{j<k\leq r}m_k\langle\varpi_{i_k},s_{i_k}\ldots s_{i_{j+1}}(\alpha_{i_j}^\vee)\rangle: 1\leq j\leq r\}.
\end{eqnarray*}
\end{corollary}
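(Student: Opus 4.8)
The plan is to reduce all three quantities to the intersection numbers $d_j:=\mathcal L_{\bf i,m}\cdot C_j$ for $1\le j\le r$, and then to compare the corresponding minima. For the two right-hand sides this is immediate: by Proposition~\ref{prop:Tcurves}(1) the $T$-stable curves of $Z_{\bf i}$ through $z_0$ are exactly $C_1,\dots,C_r$, so the middle minimum is $\min_j d_j$; and by Proposition~\ref{prop:Tcurves}(3), together with $\langle\varpi_{i_j},\alpha_{i_j}^\vee\rangle=1$ (the reflection word $s_{i_j}\cdots s_{i_{j+1}}$ being empty), one gets
$$
d_j=\sum_{k=1}^r m_k(\mathcal L_k\cdot C_j)=m_j+\sum_{j<k\le r}m_k\langle\varpi_{i_k},s_{i_k}\cdots s_{i_{j+1}}(\alpha_{i_j}^\vee)\rangle,
$$
which identifies the middle and right-hand minima.

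Next I would establish that the first minimum is $\ge$ the others. Every minimal rational curve $C$ lies in a family $\mathcal K$ of minimal rational curves; the members of $\mathcal K$ are algebraically, hence numerically, equivalent, so $\mathcal L_{\bf i,m}\cdot C$ depends only on $\mathcal K$. By Theorem~\ref{thm:BKT}(1) the (unique) member of $\mathcal K$ through $z_0$ is some $C_j$, whence $\mathcal L_{\bf i,m}\cdot C=d_j$; and by Theorem~\ref{thm:BKT}(2) the indices $j$ arising this way are precisely those for which $\gamma_j:=s_{i_r}\cdots s_{i_{j+1}}(\alpha_{i_j})$ is a simple root. Hence the first minimum equals $\min\{d_j:\gamma_j\text{ simple}\}$, a minimum over a subset of $\{d_1,\dots,d_r\}$, and so is $\ge\min_j d_j$.

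It then remains to prove the reverse inequality, i.e.\ that $\min_j d_j$ is attained at an index $j$ with $\gamma_j$ simple. Since ${\bf m}\in\mathbb Q_{>0}^r$, a coordinatewise inequality $\mathcal L_k\cdot C_{j_0}\ge\mathcal L_k\cdot C_{j_1}$ for all $k$ (equivalently, $[C_{j_0}]-[C_{j_1}]$ lying in the Mori cone; recall the $\mathcal L_k$ generate the nef cone of $Z_{\bf i}$ by Theorem~\ref{thm:LT}) forces $d_{j_0}\ge d_{j_1}$. So the whole thing comes down to the combinatorial \emph{Claim}: if $\gamma_{j_0}$ is not simple (so $j_0<r$, as $\gamma_r=\alpha_{i_r}$), then there is $j_1$ with $j_0<j_1\le r$ and $\mathcal L_k\cdot C_{j_0}\ge\mathcal L_k\cdot C_{j_1}$ for every $k$. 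Granting this, I start from a minimizer $j_0$ of $d$; if $\gamma_{j_0}$ is not simple I replace it by the $j_1$ the Claim produces, for which $d_{j_1}\le d_{j_0}$ and hence $d_{j_1}=d_{j_0}$; since the index strictly increases and $\gamma_r$ is simple, the process terminates at an index with simple $\gamma$, giving equality of all three minima.

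The main obstacle is the Claim itself, which is a purely root-theoretic statement about the reduced word. I would prove it by first observing that the numbers $\mathcal L_k\cdot C_j$ for $j\ge j_0$ and the condition "$\gamma_j$ simple" depend only on the tail word $(i_{j_0},\dots,i_r)$, itself reduced (being a contiguous subword of the reduced word $(i_r,\dots,i_1)$); replacing ${\bf i}$ by this tail, or equivalently inducting on $r$ and deleting $i_1$, reduces to the case $j_0=1$. Writing $\sigma_k:=s_{i_k}\cdots s_{i_2}(\alpha_{i_1})$ (a positive root for $2\le k\le r$, with $\sigma_r=\gamma_1$), one has $\mathcal L_k\cdot C_1=\langle\varpi_{i_k},\sigma_k^\vee\rangle$, and the delicate point is to exhibit, when $\sigma_r$ is not simple, an index $j_1\ge 2$ dominating this sequence in the required sense — for instance by taking $\ell$ maximal with $\sigma_\ell$ simple and exploiting that the reflections $s_{i_{\ell+1}},\dots$ then act inside a low-rank subsystem, so that the tail of the sequence $(\sigma_k)$ is controlled by the sequence attached to some $C_{j_1}$. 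This reduced-word fact is implicit in the analysis behind Theorem~\ref{thm:BKT}; everything else in the argument is bookkeeping.
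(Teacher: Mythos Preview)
Your treatment of the second equality and of the inequality $\min\{\mathcal L_{\bf i,m}\cdot C:C\text{ minimal}\}\ge\min_j d_j$ is correct and agrees with the paper. The gap is in the reverse inequality. You reduce it to a combinatorial Claim (coordinatewise domination of $C_{j_0}$ by some $C_{j_1}$ with $j_1>j_0$ whenever $\gamma_{j_0}$ is not simple) but never prove it: the sketch (``take $\ell$ maximal with $\sigma_\ell$ simple and exploit a low-rank subsystem'') is too vague to constitute an argument, and saying the fact is ``implicit in the analysis behind Theorem~\ref{thm:BKT}'' is not a proof either. That Claim is the entire content of the inequality you are after, so the proposal as it stands is incomplete; it is not even clear that the stronger coordinatewise (hence $\mathbf m$-independent) domination you assert actually holds in general.

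The paper bypasses all of this combinatorics with a short geometric argument. Since $z_0$ lies in the open $B$-orbit, each $C_j$ is a dominant rational curve. By \cite[Theorem~IV.2.4]{Kol}, a dominant family of rational curves of minimal $\mathcal L_{\bf i,m}$-degree is already a \emph{minimal} family; its member through the general point $z_0$ is, by Theorem~\ref{thm:BKT}(1), one of the $C_j$. Hence some $C_j$ is a minimal curve of degree at most $\min_j d_j$, which gives $\min\{\mathcal L_{\bf i,m}\cdot C:C\text{ minimal}\}\le\min_j d_j$ immediately, with no root-theoretic statement about the reduced word required.
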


\begin{proof}
Note that the curves containing the base point $z_0$ are dominant and recall that the minimal curves of $Z_{\bf i}$ containing $z_0$ are all $T$-stable (Theorem~\ref{thm:BKT}(1)).
Besides, the degrees of all curves in the same minimal family w.r.t. a given polarization  are equal and  thanks to~\cite[Theorem IV.2.4]{Kol}, we know that a family of rational curves of minimal degree (among dominant curves) w.r.t. a given polarization  is minimal. 
All this yields the first equality.
The second equality follows easily from Proposition~\ref{prop:Tcurves}(3).
\end{proof}

\section{Upper bound}\label{sec:up}
Throughout this section, we assume that $\bf i$ is a reduced word and we let $X$ be the Bott-Samelson variety $Z_{\bf i}$. 
We give an upper bound for the Gromov width of $X$ equipped with a K\"ahler form $\omega\in H^2(X,\mathbb R)$ by using Gromov-Witten invariants.  

We thus start by recalling some basics on Gromov-Witten invariants.

Given $A\in H_2(X,\mathbb Z)$, consider the moduli space $\overline{\mathcal M^X_{0,k}}(A)$ of stable maps of genus $0$ into $X$ of class $A$ and with $k$ marked points. 
This space carries a virtual fundamental class $[\overline{\mathcal M]}^{\mathrm{vir}}$ in the rational \v{C}ech homology group $H_d(\overline{\mathcal M_{0,k}}(A),\mathbb Q)$ where $d$ denotes the expected dimension of $\overline{\mathcal{M}^X_{0,k}}(A)$, that is 
$$
d=\mathrm{dim}X+c_1(A)+k-3
$$
where $c_1$ denotes the first Chern class of the tangent bundle $T_X$ of $X$. 

\begin{proposition}\label{prop:smooth}
Let $A$ be the class of a $T$-stable curve of $X$ through the generic point $z_0$ of $X$.
Then the moduli space $\overline{\mathcal M^X_{0,k}}(A)$ is smooth and has the expected dimension.
Moreover, 
$$
[\overline{\mathcal M^X_{0,k}}(A)]=[\overline{\mathcal M]}^{\mathrm{vir}}.
$$
\end{proposition}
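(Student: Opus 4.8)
The plan is to reduce the statement to a convexity/positivity property of Bott-Samelson varieties that forces every genus-zero stable map in the relevant class to be nicely behaved. The key geometric input is that $A$ is the class of one of the curves $C_j = \overline{U_{\beta_j} z_0} \cong \mathbb{P}^1$, and that such curves are \emph{free}: since $Z_{\bf i}$ is built as an iterated $\mathbb{P}^1$-bundle, the tangent bundle restricted to $C_j$ splits as a sum of line bundles of non-negative degree. Concretely, I would compute $T_X|_{C_j}$ using the tower $f_{j-1}\colon Z_{\bf i}\to Z_{\bf i}(j)$ and the fact that $C_j$ lies in a fiber of $f_{j-1}$ over $z_{j-1}$ — so $T_X|_{C_j}$ is an extension of $f_{j-1}^*T_{Z_{\bf i}(j)}|_{C_j}$ (a trivial bundle, since $C_j$ maps to a point) by the relative tangent bundle of the $\mathbb{P}^1$-bundle chain below, whose summands have degrees computable from Proposition~\ref{prop:Tcurves}(3), all $\ge 0$. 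Thus $H^1(C_j, T_X|_{C_j}) = 0$.

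With freeness in hand, the standard deformation theory of stable maps applies: for a stable map $\varphi\colon (\Sigma, x_1,\dots,x_k)\to X$ of genus $0$ in class $A$, the obstruction space is controlled by $H^1$ of $\varphi^*T_X$ twisted down at the nodes, and vanishing of $H^1(\mathbb{P}^1, T_X|_{C_j})$ together with the fact that $A$ is indecomposable as an effective class of a curve through a general point (it is the class of a line in one of the $\mathbb{P}^1$-factors, hence cannot split off further effective curves meeting the generic point nontrivially) forces $\Sigma$ to be irreducible, i.e. $\Sigma \cong \mathbb{P}^1$ with $k$ marked points. Then $\overline{\mathcal M^X_{0,k}}(A)$ is a bundle over $\overline{\mathcal M^X_{0,k}}(A)$ with no marked points whose fiber is $(\mathbb{P}^1)^k$ minus diagonals (compactified), and the unmarked space is a homogeneous-type bundle of free rational curves, hence smooth of the expected dimension. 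I would organize this as: (i) $T_X|_{C_j}$ is globally generated with $H^1=0$; (ii) every stable map in class $A$ has smooth domain $\mathbb{P}^1$; (iii) the resulting moduli space is smooth of dimension $\dim X + c_1(A) + k - 3$; (iv) smoothness of the expected dimension implies the virtual class is the fundamental class (this last step is standard, e.g. via the fact that the obstruction sheaf vanishes).

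The main obstacle I anticipate is step (ii): ruling out stable maps with reducible domains. A priori a class-$A$ stable map could have a domain with several components, one mapping with positive degree to $C_j$ (or a translate) and others contracted, or — worse — the image could be a \emph{different} curve in the same homology class $A$ that is not one of the $C_j$. Here one must use more than numerics: the curves through the generic point $z_0$ are, by Proposition~\ref{prop:Tcurves}(1) applied after translation, all $B$-translates of the $C_j$, and the homology class pins down $j$ because the $\mathcal L_k \cdot C_j$ are linearly independent functionals. Contracted components attached to a genuine $\mathbb{P}^1 \to C_j$ carry marked points or nodes, and a dimension count using $c_1(A)\cdot(\text{component}) = 0$ on contracted parts shows they violate stability unless absent. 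I would phrase this carefully, perhaps citing the general principle that a free rational curve of minimal anticanonical degree in its deformation class has unobstructed, irreducible-domain moduli (cf. \cite[Chapter II.2]{Kol}), and then note that $C_j$, being a line in a $\mathbb{P}^1$-factor, is exactly such a curve.

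Finally, once smoothness of the expected dimension is established, the identity $[\overline{\mathcal M^X_{0,k}}(A)] = [\overline{\mathcal M}]^{\mathrm{vir}}$ follows because the virtual class agrees with the usual fundamental class whenever the moduli space is smooth of expected dimension — the obstruction bundle is then zero, so the virtual normal cone is the zero section. I would state this as a one-line consequence, citing the construction of the virtual class, rather than reproving it.
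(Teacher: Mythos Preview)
The paper's proof is two sentences: it cites \cite[Lemma~2.5(i)]{BK19} for the vanishing $H^1(C,\,T_X|_C)=0$, declares the moduli space unobstructed, and defers the remaining deformation-theoretic conclusion to Section~2 of \cite{Lee}. Your step~(i) re-derives this vanishing by hand via the iterated $\mathbb P^1$-bundle structure, which is in the same spirit as what \cite{BK19} does; at that point you have exactly the input the paper uses, and you could stop and cite \cite{Lee} as the paper does.

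Your step~(ii), however, is both unnecessary for the paper's argument and incorrectly argued. It is unnecessary because the paper never claims that domains of stable maps in class $A$ are irreducible; it passes directly from the vanishing of $H^1$ to unobstructedness by citation. It is flawed for two reasons. First, your appeal to Proposition~\ref{prop:Tcurves}(1) ``applied after translation'' to conclude that every curve through $z_0$ in class $A$ is a $B$-translate of some $C_j$ is wrong: that proposition classifies only the \emph{$T$-stable} curves through $z_0$, and there are plenty of non-$T$-stable rational curves through $z_0$. Second, your indecomposability claim --- that $A$ ``is the class of a line in one of the $\mathbb P^1$-factors, hence cannot split off further effective curves'' --- applies only to the \emph{minimal} curves $C_j$ (those with $s_{i_r}\cdots s_{i_{j+1}}(\alpha_{i_j})$ simple, cf.~Theorem~\ref{thm:BKT}(2) and Corollary~\ref{cor:mod-curves}), whereas the proposition is stated for \emph{all} $T$-stable curves through $z_0$. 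For a non-minimal $C_j$ the class $[C_j]$ genuinely decomposes as a sum of effective classes, so reducible-domain stable maps do occur in the boundary of $\overline{\mathcal M^X_{0,k}}(A)$, and your dimension count does not dispose of them.
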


\begin{proof} 
For any $T$-stable curve $C$ of $X$ going through $z_0$, we have: $H^1(C, {T_{X}}_{|_C})=0$ by~\cite[Lemma 2.5(i)]{BK21}. Therefore, the moduli space $\overline{\mathcal M^X_{0,k}}(A)$ is unobstructed. The proposition follows; see~Section 2 in~\cite{Lee}.
\end{proof}

Let
$$
ev^k:\overline{\mathcal{M}^X_{0,k}}(A)\longrightarrow X^k
$$
be the evaluation map sending a stable map to the $k$-tuple of its values at the  $k$ marked points.

\begin{corollary}\label{cor:mod-curves}
Let $A$ be the class of a minimal curve of $X$ through the generic point $z_0$ of $X$.
Then the evaluation map $ev^1:\overline{\mathcal{M}^X_{0,1}}(A)\rightarrow X$ is an isomorphism.
\end{corollary}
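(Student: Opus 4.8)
The plan is to exploit the smoothness and dimension-count obtained in Proposition~\ref{prop:smooth}, applied to a minimal-curve class $A$, together with the classical characterization of minimal rational curves through a general point. First I would observe that for a minimal curve $C_j$ through $z_0$ we have $-K_X\cdot C_j = c_1(A)$, and Proposition~\ref{prop:Tcurves}(4) gives $-K_X\cdot C_j = \sum_i\langle\varpi_i, s_{i_r}\cdots s_{i_{j+1}}(\alpha_{i_j}^\vee)\rangle + 1$; minimality (Theorem~\ref{thm:BKT}(2)) says $s_{i_r}\cdots s_{i_{j+1}}(\alpha_{i_j})$ is a \emph{simple} root $\alpha_k$, so the sum collapses to $\langle\varpi_k,\alpha_k^\vee\rangle = 1$ and hence $c_1(A)=2$. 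Plugging $k=1$ and $c_1(A)=2$ into the expected-dimension formula $d=\dim X + c_1(A)+k-3$ yields $d=\dim X$. By Proposition~\ref{prop:smooth}, $\overline{\mathcal M^X_{0,1}}(A)$ is smooth of this dimension $\dim X$, so the evaluation map $ev^1\colon\overline{\mathcal M^X_{0,1}}(A)\to X$ is a morphism between smooth projective varieties of the same dimension.

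Next I would argue that $ev^1$ is bijective, or at least generically injective and surjective, and then upgrade this to an isomorphism. Surjectivity (more precisely, dominance) follows because $A$ is the class of a curve through the generic point $z_0$, and any minimal family has the property that through a general point there pass curves covering $X$; concretely, the $B$-orbit (or $T$-orbit) sweep of $C_j$ and its translates shows $\mu\colon\mathcal U\to X$ is dominant, hence $ev^1$ is dominant, hence surjective by properness. For injectivity I would invoke the standard fact (see \cite[Chapter V]{Kol}) that for a minimal family of rational curves with $c_1(A)$ equal to the index bound, through a general point there is a unique member in each direction; more robustly, since $c_1(A)=2$ the curves are lines with respect to the relevant polarization and $\overline{\mathcal M^X_{0,1}}(A)$ with its evaluation to $X$ realizes these lines, and the fiber $ev^{1,-1}(x) \cong \mathcal K_x$ is finite (it is projective by minimality and $0$-dimensional by the dimension count), so $ev^1$ is finite. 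A finite birational morphism onto a smooth (hence normal) target is an isomorphism by Zariski's main theorem.

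The step I expect to be the main obstacle is pinning down \emph{injectivity} of $ev^1$ — equivalently, that $\mathcal K_{x}$ is a single reduced point for general $x$, not merely finite. One clean way around this: the morphism $ev^1$ is finite (from the dimension count plus properness of the fibers) and $X$ is smooth, so by Zariski's main theorem it suffices to show $ev^1$ has degree one, i.e. is birational. Birationality can be checked at $z_0$: the fiber $\mathcal K_{z_0}$ equals $\{C_j\}$ by Theorem~\ref{thm:BKT}(1), a single point, and one must verify this point is reduced, which follows from the smoothness of $\overline{\mathcal M^X_{0,1}}(A)$ at that point together with the fact that $ev^1$ is unramified there — the latter because $H^1(C_j,T_X|_{C_j})=0$ and the normal bundle of $C_j$ is globally generated with the expected splitting type $\mathcal O(1)$'s and $\mathcal O$'s forced by $c_1=2$ and $\dim X$. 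Assembling: $ev^1$ is finite, birational, onto a smooth variety, hence an isomorphism.

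One should double-check two small points in the write-up: that the class $A$ of a minimal curve through the generic point indeed equals the class of some $C_j$ with $s_{i_r}\cdots s_{i_{j+1}}(\alpha_{i_j})$ simple (this is exactly Theorem~\ref{thm:BKT}(1)--(2), since the generic point can be moved to $z_0$ by $B$), and that the stable maps in class $A$ have irreducible domain — true because $c_1(A)=2$ leaves no room for a reducible genus-zero curve with a contracted component (each component would need $c_1 \ge 1$, forcing at most two components, and a node plus the single marked point would violate the dimension/stability bookkeeping, so in fact the domain is a smooth $\mathbb P^1$). With these confirmed, the isomorphism $ev^1\colon\overline{\mathcal M^X_{0,1}}(A)\xrightarrow{\ \sim\ }X$ follows.
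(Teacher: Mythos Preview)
Your approach diverges from the paper's, and the divergence conceals a gap. The paper observes that since $s_{i_r}\cdots s_{i_{j+1}}(\alpha_{i_j})$ is simple (Theorem~\ref{thm:BKT}(2)), the truncated word $(i_1,\ldots,\widehat{i_j},\ldots,i_r)$ is again reduced, so the natural projection $X\to Z_{(i_1,\ldots,\widehat{i_j},\ldots,i_r)}$ is a $\mathbb P^1$-fibration and $C_j$ is one of its fibres. Thus $A$ is a fibre class, through every point of $X$ there is exactly one curve of class $A$, and $ev^1$ is bijective; smoothness (Proposition~\ref{prop:smooth}) then upgrades bijectivity to an isomorphism.

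Your computation $c_1(A)=2$ and the resulting dimension match are correct and useful, and invoking $\mathcal K_{z_0}=\{C_j\}$ to get degree one at the base point is the right idea. But the step ``dimension count plus properness of the fibres, so $ev^1$ is finite'' does not follow: you only obtain that $ev^1$ is proper and generically finite, and a blow-up already shows this is not enough for ``finite $+$ birational onto smooth $\Rightarrow$ isomorphism''. Your exclusion of reducible domains is likewise unjustified: the claim that each non-contracted component has $c_1\ge 1$ would require $-K_X$ to be positive on every effective curve class, but Bott--Samelson varieties are not Fano in general. Both issues dissolve once one uses that $A$ is the fibre class of a $\mathbb P^1$-bundle: indecomposability of $A$ in the effective cone rules out reducible domains, and the fibration exhibits exactly one curve of class $A$ through every point, giving bijectivity of $ev^1$ directly. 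So the $\mathbb P^1$-fibration is not an optional shortcut --- it is the structural input your argument is missing.
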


\begin{proof}
By Proposition~\ref{thm:BKT}, $A=[C_j]$ for some $j$ such that $s_{i_r}\cdots s_{i_{j+1}}(\alpha_{i_j})$ is a simple root.
For such a $j$, we have in particular, that $s_{i_1}...s_{i_{j-1}}s_{i_{j+1}}...s_{i_r}$ is reduced hence
the natural morphism $X\rightarrow Z_{(i_1,....i_{j-1},i_{j+1},...i_r)}$ is a $\mathbb P^1$-fibration with fiber 
over $[\dot s_{i_1},\ldots,\dot s_{i_{j-1}},\dot s_{i_{j+1}},\ldots,\dot s_{i_{}j-1}]$ the curve $C_j$ itself.
It follows that the evaluation map $ev^1$ is bijective. Thanks to Proposition~\ref{prop:smooth}, we can conclude the proof.
\end{proof}

For $\alpha_i\in H^*(X,\mathbb Q)$ with $i= 1,...,k$, the Gromov-Witten invariant is defined to be the rational number
$$
GW^X_{A,k}(\alpha_1,...,\alpha_k):=\int_{[\overline{\mathcal M}]^{\mathrm{vir}}} (ev^k)^*(\alpha_1\times ...\times\alpha_k),
$$
whenever the degrees of $\alpha_1,\ldots,\alpha_k$ sum up to the expected dimension $d$; otherwise it is $0$.

\begin{proposition}\label{prop:GW-non-vanish}
Let $A$ be the class of a minimal curve $C$ of $X$ through $z_0$. 
Then 
$$ 
GW^X_{A,1}(\mathrm{PD}[pt])\neq 0.
$$
\end{proposition}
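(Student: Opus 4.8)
The plan is to compute $GW^X_{A,1}(\mathrm{PD}[pt])$ directly as the number of minimal curves of class $A$ through a fixed generic point, and show this number is $1$ via Corollary~\ref{cor:mod-curves}. The key observation is that by Proposition~\ref{prop:smooth}, the moduli space $\overline{\mathcal M^X_{0,1}}(A)$ is smooth of the expected dimension, so the virtual fundamental class coincides with the ordinary one and the Gromov–Witten invariant is a genuine (signed, but here unsigned since everything is complex) count. The expected dimension of $\overline{\mathcal M^X_{0,1}}(A)$ is $\dim X + c_1(A) + 1 - 3 = \dim X + c_1(A) - 2$, and we need the degree of the insertion $\mathrm{PD}[pt]$, which is $2\dim X$ (real) or $\dim X$ (complex), to equal the complex expected dimension. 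So first I would check the dimension bookkeeping: the class $\mathrm{PD}[pt] \in H^{2\dim X}(X,\mathbb Q)$ is the Poincaré dual of a point, and for the integral to be potentially nonzero we need $\dim_{\mathbb C}\overline{\mathcal M^X_{0,1}}(A) = \dim_{\mathbb C} X$, i.e. $c_1(A) = 2$. Here $A = [C_j]$ with $s_{i_r}\cdots s_{i_{j+1}}(\alpha_{i_j})$ a simple root, and by Proposition~\ref{prop:Tcurves}(4), $-K_X\cdot C_j = \sum_i \langle \varpi_i, s_{i_r}\cdots s_{i_{j+1}}(\alpha_{i_j}^\vee)\rangle + 1 = 1 + 1 = 2$ exactly when $s_{i_r}\cdots s_{i_{j+1}}(\alpha_{i_j}^\vee)$ is a simple coroot — which is precisely the minimality condition. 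So the dimension matches, and this is the reason minimal curves are the right objects.

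**Next**, with the dimensions matching, I would invoke Corollary~\ref{cor:mod-curves}: the evaluation map $ev^1:\overline{\mathcal M^X_{0,1}}(A)\to X$ is an isomorphism. Therefore
$$
GW^X_{A,1}(\mathrm{PD}[pt]) = \int_{[\overline{\mathcal M^X_{0,1}}(A)]}(ev^1)^*(\mathrm{PD}[pt]) = \int_X \mathrm{PD}[pt] = 1,
$$
where the middle equality uses that $ev^1$ is an isomorphism (hence degree $1$, pushes the fundamental class to the fundamental class) and the last is the definition of Poincaré duality of a point. Since $GW^X_{A,1}(\mathrm{PD}[pt]) = 1 \neq 0$, the proposition follows.

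**The main subtlety**, which I expect to be the only real point requiring care, is making sure the degree/dimension conventions align so that the Gromov–Witten invariant is not declared zero by the "wrong codimension" clause in its definition; this is exactly the content of verifying $c_1(A) = 2$ via Proposition~\ref{prop:Tcurves}(4) and the characterization of minimal curves in Theorem~\ref{thm:BKT}(2). Everything else is formal: smoothness and the agreement of virtual and actual fundamental classes come for free from Proposition~\ref{prop:smooth}, and the computation of the integral is immediate once $ev^1$ is known to be an isomorphism. One should also note that $\overline{\mathcal M^X_{0,1}}(A)$ is proper (it is a moduli space of stable maps) and connected (it is isomorphic to $X$, which is irreducible), so the integral is well-defined and the count is genuinely $1$ rather than merely nonzero modulo cancellation.
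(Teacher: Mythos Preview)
Your proof is correct and follows essentially the same approach as the paper: verify $c_1(A)=2$ via Proposition~\ref{prop:Tcurves}(4) and the minimality criterion, then use Proposition~\ref{prop:smooth} together with Corollary~\ref{cor:mod-curves} to identify the Gromov--Witten invariant with $\int_X \mathrm{PD}[pt]$. Your version is slightly more explicit in computing the invariant as $1$ rather than merely nonzero, but the argument is the same.
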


\begin{proof}
Note that $c_1(A)=2$ by Proposition~\ref{prop:Tcurves}(4).
It follows that $PD[pt]$ satisfies the codimension condition that is, its degree equals the expected dimension which is
$d=\mathrm{dim}X+2-2$. 
By Proposition~\ref{prop:smooth} along with Corollary~\ref{cor:mod-curves}, we have the equality
$$
GW^X_{A,1}(\mathrm{PD}[pt])=\int_{[X]} \mathrm{PD}[pt].
$$
The right hand side being obviously non-equal to $0$, the proposition follows.
\end{proof}

The variety $X$ being projective and smooth and $\omega$ being K\"ahler by assumption, the moduli space $\overline{\mathcal M^X_{0,k}}(A)$ is homeomorphic to the moduli space of stable $J$-holomorphic maps of genus $0$ into $X$ of class $A$ and with $k$-marked points. Here $J$ stands for the complex structure of $X$.
Moreover, the algebraic and symplectic virtual fundamental classes as constructed in~\cite{beh} and \cite{siebert} resp. coincide; see~\cite{siebert}.
As a consequence, Theorem 4.1 in~\cite{HLS} (for peculiar cocycles) reads as follows.

\begin{theorem}[{Gromov}]\label{thm:Gromov}
Let $(X,\omega)$ be K\"ahler and
$A \in H_2(X, \mathbb Z)\setminus \{0\}$ be a second homology class.  
If $GW^X_{A, k}(\mathrm{PD}[pt], \alpha_2, \ldots, \alpha_k)\neq 0$ for some $k$ and $\alpha_i\in H^*(X, \mathbb Q)$ ($i=2,\ldots,k$), then the inequality $w_G(X, \omega)\leq \omega(A)$ holds.
\end{theorem}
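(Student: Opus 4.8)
The plan is to obtain the statement as the special case of Theorem~4.1 of~\cite{HLS} corresponding to the insertion $\mathrm{PD}[pt]$ (the ``peculiar cocycle''), once one knows --- as recalled just before the statement --- that the algebraic Gromov--Witten invariants $GW^X_{A,k}$ coincide with the symplectic ones built from $J$-holomorphic stable maps for an $\omega$-compatible almost complex structure $J$. Granting that identification, the hypothesis $GW^X_{A,k}(\mathrm{PD}[pt],\alpha_2,\ldots,\alpha_k)\neq0$ is precisely the input of the pseudoholomorphic-curve argument of Gromov and McDuff packaged in~\cite{HLS}, and $w_G(X,\omega)\leq\omega(A)$ follows. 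For completeness I would recall the mechanism.

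Fix $a<w_G(X,\omega)$ and a symplectic embedding $\iota\colon B\hookrightarrow(X,\omega)$ of a round ball $B\subset\mathbb C^N$ of radius $\rho$ with $a=\pi\rho^2$ and $2N=\dim X$. First I would choose an $\omega$-compatible almost complex structure $J$ on $X$ agreeing with $\iota_*J_{\mathrm{std}}$ on the image of a slightly smaller concentric ball; this is possible because the space of $\omega$-compatible almost complex structures is non-empty and contractible, so one may interpolate between the pushed-forward standard structure near $\iota(0)$ and any chosen structure away from $B$. For such a $J$ the metric $g_J(\cdot,\cdot)=\omega(\cdot,J\cdot)$ makes $\iota$ an isometry onto its image near $\iota(0)$ for the Euclidean metric. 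Since $GW^X_{A,k}(\mathrm{PD}[pt],\alpha_2,\ldots,\alpha_k)$ is independent of the $\omega$-compatible $J$ and of the cycles used to represent the insertions, its non-vanishing for this $J$ yields a genus-$0$ $J$-holomorphic stable map of class $A$ through $\iota(0)$: for generic data this is the standard consequence of a non-vanishing invariant with a point insertion, and Gromov compactness then takes care of the non-generic point $\iota(0)$ and of the $J$ normalized on the ball.

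Let $C$ be the (connected, closed) image of this stable map. Being $J$-holomorphic, $C$ carries $\omega$ non-negatively and $\mathrm{area}_{g_J}(C)\leq\int_C\omega=\omega(A)$. Put $S:=\iota^{-1}(C)\cap B$, a minimal surface in the Euclidean ball $B$ through its centre whose boundary lies in $\partial B$ because $C$ is closed. The monotonicity formula for minimal surfaces --- a complex-analytic curve being minimal --- gives $\mathrm{area}(S)\geq\pi\rho^2=a$, so
$$a\leq\mathrm{area}(S)\leq\mathrm{area}_{g_J}(C)\leq\int_C\omega=\omega(A).$$
Letting $a\uparrow w_G(X,\omega)$ gives $w_G(X,\omega)\leq\omega(A)$.

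The monotonicity estimate is classical, so the real content lies in the two points that make the displayed Gromov--Witten hypothesis usable, and these are exactly what~\cite{HLS} (with the coincidence of virtual classes of~\cite{beh,siebert} recalled above) supplies: first, that the algebraic invariants $GW^X_{A,k}$ appearing in the statement truly agree with the symplectic ones counting $J$-holomorphic stable maps; and second, that a single non-zero Gromov--Witten number carrying a point insertion can be promoted --- via deformation invariance in $J$ together with Gromov compactness --- to an actual $J$-holomorphic curve of class $A$ through the prescribed point $\iota(0)$, for a $J$ that has been normalized to be standard on the embedded ball. I expect the second of these to be the main technical hurdle.
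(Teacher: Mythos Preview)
Your proposal is correct and matches the paper's treatment exactly: the paper does not prove this theorem at all but simply records it as the specialization of \cite[Theorem~4.1]{HLS} to the point insertion, after invoking the coincidence of algebraic and symplectic virtual fundamental classes from \cite{beh,siebert}. Your additional recap of the underlying Gromov--McDuff mechanism (ball embedding, $J$ standardized on the ball, existence of a $J$-holomorphic curve through the center from the non-vanishing invariant, and the monotonicity estimate) is accurate and goes beyond what the paper provides, but it is not a different approach---it is precisely the content of the cited result.
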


 \begin{corollary}\label{cor:upper}
 Let $[\omega]\in H^2(Z_{\bf i},\mathbb R)$ be a K\"ahler form and $A$ be the class of a minimal curve of $Z_{\bf i}$. If the expression ${\bf i}$ is reduced then
$$
w_G(Z_{\bf i}, \omega)\leq \omega(A).
$$  
\end{corollary}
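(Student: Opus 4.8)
The plan is to deduce Corollary~\ref{cor:upper} directly from the machinery already assembled in this section, so the proof is essentially a bookkeeping argument combining the preceding results. First I would observe that by Theorem~\ref{thm:BKT}(1), any minimal family $\mathcal K$ on $Z_{\bf i}$ has $\mathcal K_{z_0} = \{C_j\}$ for some $j$; since the curves $C_j$ pass through $z_0$ and $z_0$ is a general point (indeed the whole $B$-orbit $Bz_0$ is open in the relevant sense, and the curves are dominant as noted in the proof of Corollary~\ref{cor:dominant-minimal}), the class $A = [C_j]$ is precisely the class of a minimal rational curve through the generic point $z_0$. This is the hypothesis needed to invoke the earlier propositions.

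Next I would apply Proposition~\ref{prop:GW-non-vanish} to this class $A$: it gives $GW^X_{A,1}(\mathrm{PD}[pt]) \neq 0$, where $X = Z_{\bf i}$. The only input Proposition~\ref{prop:GW-non-vanish} requires is that $A$ be the class of a minimal curve through $z_0$, which we have just verified, together with the reducedness of $\bf i$ (assumed throughout Section~\ref{sec:up}), which underlies Proposition~\ref{prop:smooth}, Corollary~\ref{cor:mod-curves}, and the computation $c_1(A) = 2$ coming from Proposition~\ref{prop:Tcurves}(4) (note that for a minimal curve $C_j$, $s_{i_r}\cdots s_{i_{j+1}}(\alpha_{i_j})$ is simple, so $-K_{Z_{\bf i}}\cdot C_j = 1 + 1 = 2$).

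Finally I would feed the non-vanishing $GW^X_{A,1}(\mathrm{PD}[pt]) \neq 0$ into Theorem~\ref{thm:Gromov} (Gromov), taking $k = 1$ and no further cohomology insertions $\alpha_2,\ldots,\alpha_k$. Since $(Z_{\bf i},\omega)$ is K\"ahler by hypothesis and $A \neq 0$ (it is the class of a genuine curve, e.g.\ $\mathcal L_{\bf i,m}\cdot C_j > 0$ for an ample polarization by Proposition~\ref{prop:Tcurves}(3)), Theorem~\ref{thm:Gromov} yields $w_G(Z_{\bf i},\omega) \leq \omega(A)$, which is exactly the claimed inequality.

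There is no real obstacle here — the corollary is a formal consequence of Proposition~\ref{prop:GW-non-vanish} and Theorem~\ref{thm:Gromov}. The only point demanding a line of care is the identification of ``the class of a minimal curve'' in the statement with ``the class of a minimal rational curve through the generic point $z_0$'' required by Proposition~\ref{prop:GW-non-vanish}; this is handled by Theorem~\ref{thm:BKT}(1) together with the remark (from the proof of Corollary~\ref{cor:dominant-minimal}) that curves through $z_0$ are dominant, so that $z_0$ may be taken as the general point in the definition of a minimal family.
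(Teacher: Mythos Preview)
Your proposal is correct and follows essentially the same route as the paper: combine Proposition~\ref{prop:GW-non-vanish} with Theorem~\ref{thm:Gromov}, and use Theorem~\ref{thm:BKT}(1) to reduce the class of an arbitrary minimal curve to that of some $C_j$ through $z_0$. The paper phrases this last reduction by noting that $\omega(A)=\omega([C])$ for every $C$ in the minimal family containing $C_j$, while you invoke $\mathcal K_{z_0}=\{C_j\}$ directly; the content is the same.
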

  
  \begin{proof}
  The corollary follows readily from Theorem~\ref{thm:Gromov} and Proposition~\ref{prop:GW-non-vanish} whenever $A=[C_j]$ for some minimal curve $C_j$ of $Z_{\bf i}$. Note that $\omega(A)=\omega([C])$ for every curve $C$ in the minimal family containing $C_j$. This along with Theorem~\ref{thm:BKT} yields the inequality for any minimal curve.
  \end{proof}
  
\section{Lower bound}\label{sec:lower}
In this section, we prove Theorem~\ref{thm:main-lower}: we give a lower bound for the Gromov width.
The bound we obtain is derived from a result of Kaveh's involving Newton-Okounkov bodies.

Newton-Okounkov bodies of projective algebraic varieties are convex bodies 
generalizing momentum polytopes of symplectic toric manifolds; they were introduced about the same time in~\cite{KK12} and \cite{LM}.  
In the literature, one can find several non-equivalent definitions of Newton-Okkounkov bodies for Bott-Samelson varieties. Here, we are concerned with one construction which parallels one of Fujita's definitions.

\subsection{}
Let us thus start by defining the Newton-Okounkov body of interest to us.

Fix a reduced expression  ${\bf i}=(i_1,\ldots,i_r)$ of a given $w\in W$.
Recall the definition of the roots $\beta_i$ given in~\ref{eq:def-beta_i} and that $U_{\beta_i}$ denotes the root subgroup of $G$ corresponding to $\beta_i$.

We shall regard $U_{\beta_{1}}\times \cdots \times U_{\beta_{r}}$ as an affine (open) neighbourhood of the base point 
$z_0$ of $Z_{\bf i}$ via the natural isomorphisms  
\begin{eqnarray}\label{eq:ident-open}
U_{\beta_{1}}\times \cdots \times U_{\beta_{r}}\simeq Bwx_0\simeq Bz_0.
\end{eqnarray}
The first isomorphism is well-known ; it follows mainly from the characterisation of the roots $\beta_i$ recalled in~(\ref{eq:beta_i}) (see e.g. \cite[\S II.13.3]{Jan03} for details). 
The second one is the isomorphism~(\ref{eq:B-orbits}); recall that $x_0$ denotes the base point of $G/B$.

We further identify the function field $\mathbb C(Z_{\bf i})=\mathbb C(U_{\beta_{1}}\times \cdots \times U_{\beta_{r}})$ with the rational function field $\mathbb C(t_1, \ldots, t_r)$ via the isomorphism of algebraic varieties 
$\mathbb C^r\simeq U_{\beta_{1}}\times \cdots \times U_{\beta_{r}}$ given by
$$
(t_1,\ldots,t_r)\longmapsto (\exp(t_1 E_{\beta_1}),\dots, \exp(t_r E_{\beta_r}))
$$
where $E_{\beta_i}$ denotes the root vector associated to the (positive) root $\beta_i$.

The lexicographic order $\leq$ on $\mathbb Z^r$ induces a total order on the set of monomials in the variables $t_1, \ldots, t_r$ (also denoted by $\leq$)
by setting 
$$
t_1^{a_1}\cdots t_r^{a_r} \leq t_1^{a'_1}\cdots t_r^{a'_r}~ \mbox{if and only if} ~(a_1, \ldots, a_r)\leq (a'_1, \ldots, a'_r) ~\mbox{in}~ \mathbb Z^r.
$$ 
Given $f(t_1, \ldots, t_r)=\sum_{j=(j_1, \ldots, j_r)}c_{j}t_1^{j_1}\cdots t_r^{j_r}\in\mathbb C[t_1, \ldots, t_r]$, let 
$(k_1, \ldots, k_r)$ be the maximum tuple among the tuples $(j_1, \ldots, j_r)$ such that $c_j\neq 0$.
Define 
$$
 v_{\beta}(f)=-(k_1, \ldots, k_r).
$$
This induces the following map 
$$
 v_{\beta}: \mathbb C(t_1, \ldots, t_r)\setminus \{0\}\longrightarrow \mathbb Z^r, \quad
 v_{\beta}(\frac{f}{g}):= v_{\beta}(f)- v_{\beta}(g)
\mbox{ for $f, g\in \mathbb C[t_1, \ldots, t_r]\setminus \{0\}$.}
$$

Given an ample line bundle $\mathcal L$ on $Z_{\bf i}$ and a non-zero section 
$\tau \in H^0(Z_{\bf i}, \mathcal L)$,
we shall regard $H^0(Z_{\bf i}, \mathcal L^{\otimes k})$
as a complex vector subspace of the function field $\mathbb C(Z_{\bf i})$ via the map
$$
H^0(Z_{\bf i}, \mathcal L^{\otimes k}) \longrightarrow \mathbb C(Z_{\bf i}), \quad \sigma \longmapsto \frac{\sigma}{\tau^k}.
$$

As in the theory of Newton-Okounkov bodies, let us now consider the following set
$$
S(Z_{\bf i},\mathcal L,  v_{\beta}, \tau):=
\bigcup _{k>0}\{(k,  v_{\beta}(\frac{\sigma}{\tau^k})): \sigma \in H^0(Z_{\bf i}, \mathcal L^{\otimes k})\setminus \{0\}\} \subset \mathbb Z_{\geq 0}\times \mathbb Z^{r}.
$$
Note that $S(Z_{\bf i},\mathcal L,  v_{\beta}, \tau)$ is a semigroup since $v_{\beta}$ is a valuation, which can be easily checked.
Let $\mathcal C(Z_{\bf i},\mathcal L,  v_{\beta}, \tau)$ be the real closed convex cone generated by $S(Z_{\bf i},\mathcal L,  v_{\beta}, \tau)$. 
The Newton-Okounkov body associated to $\mathcal L$, $\tau$ and $ v_{\beta}$  is defined as 
$$
\Delta(Z_{\bf i},\mathcal L, v_{\beta}, \tau)=\left\{x: (1,x)\in \mathcal C(Z_{\bf i},\mathcal L,  v_{\beta}, \tau)\right\}.
$$

\subsection{}\label{sec:4.2}
In this paper, we consider the Newton-Okounkov body associated to a particular section that we now introduce.

Given a dominant weight $\lambda$ (w.r.t. $B$ and $T$), let $V(\lambda)$ denote the simple $G$-module with highest weight $\lambda$ and $v_{\lambda}\in V(\lambda)$ be a highest weight vector. For our purpose,
we consider the morphism associated to the (very) ample line bundle $\mathcal L=\mathcal L_{\bf i,m}$
\begin{equation*}
\begin{array}{llll}
\Psi_{\bf i,m}: & Z_{\bf i} & \longrightarrow & \mathbb P(V(m_1\varpi_{i_1})\otimes\ldots\otimes V(m_r\varpi_{i_r}))\\
& \left[(p_1,\ldots,p_r)\right] & \longmapsto & [p_1 v_{m_1\varpi_{i_1}}\otimes\ldots\otimes p_1p_2\ldots p_r v_{m_r\varpi_{i_r}}].
\end{array}
\end{equation*}

Note that the image of the base point $z_0\in Z_{\bf i}$ through this morphism is $[v_0]$ with
\begin{eqnarray}\label{eq:vector}
v_0=s_{i_1}v_{m_1\varpi_{i_1}}\otimes s_{i_1}s_{i_2}v_{m_2\varpi_{i_2}}\otimes\ldots\otimes wv_{m_r\varpi_{i_r}}.
\end{eqnarray}
Moreover, the morphism $\Psi_{\bf i,m}$ induces 
an isomorphism of $P_{i_1}$-modules
\begin{equation}\label{eq:iso-demazure}
\begin{array}{llll}
\Phi_{\bf i,m}:V_{\bf{i,m}}^*\longrightarrow H^0(Z_{\bf i},\mathcal L_{\bf i,m})
\end{array}
\end{equation}
where $V_{\bf{i,m}}$ denotes the so-called generalized Demazure module. 
As a complex vector space, $V_{\bf{i,m}}$ is generated by the following vectors
$$
F_{i_1}^{a_1}(v_{m_1\varpi_{i_1}}\otimes F_{i_2}^{a_2}(v_{m_2\varpi_{i_2}}\otimes\ldots\otimes F_{i_{r-1}}^{a_{r-1}}(v_{m_{r-1}\varpi_{i_{r-1}}}\otimes F_{i_r}^{a_r}v_{m_r\varpi_{i_r}})\ldots))
$$
with $a_j\in\mathbb N$ and $F_{i_j}$ being the root vector associated to the root $-\alpha_{i_j}$;
see Theorem 6 and Section~4.2 in~\cite{LLM}.

We set
$$
f_0=s_{i_1}v_{m_1\varpi_{i_1}}^*\otimes s_{i_1}s_{i_2}v_{m_2\varpi_{i_2}}^*\otimes\ldots\otimes wv_{m_r\varpi_{i_r}}^*\quad\mbox{ and }\quad
\varphi_0=\Phi_{\bf i,m}(f_0).
$$

\begin{lemma}\label{lem:varphi_0}
The section $\varphi_0\in H^0(Z_{\bf i}, \mathcal{L}_{\bf i,m})$ does not vanish on the open subset $Bz_0$ of $Z_{\bf i}$.
\end{lemma}

\begin{proof}
Note that $f_0$ does not vanish on $U_{\beta_1}\times\ldots\times U_{\beta_r}[v_0]$, by a simple consideration on weights.
Thanks to~(\ref{eq:iso-demazure}) and the isomorphism $Bz_0\simeq U_{\beta_{1}}\times \cdots \times U_{\beta_{r}}$ recalled in~(\ref{eq:ident-open}), the result follows.
\end{proof}

Lemma~\ref{lem:varphi_0} enables us to introduce the Newton-Okounkov body associated to $\varphi_0$, that is
\begin{eqnarray}
\Delta_{\bf i,m}:=\Delta(Z_{\bf i},\mathcal L_{\bf i,m}, v_{\beta}, \varphi_0).
\end{eqnarray}

\begin{remark}\label{rem:Fujita}\
\begin{enumerate}
\item 
By considering the open embedding $s_{i_1}U^-_{\alpha_{i_1}}\times\ldots\times s_{i_r} U^-_{\alpha_{i_r}}\hookrightarrow Z_{\bf i}$ 
given by $(s_{i_1}u_1,\ldots, s_{i_r}u_r)\mapsto [(s_{i_1}u_1,\ldots, s_{i_r} u_r)]$,
one naturally identifies $\mathbb C(Z_{\bf i})$ with $\mathbb C(t'_1,\ldots, t'_r)$.
In~\cite[\S 8]{Fujita}, Fujita introduces the valuation $v'_{\bf i}$ on  $\mathbb C(t'_1,\ldots, t'_r)$ defined up to the lexicographic order on $\mathbb Z^r$ as above and studies the Newton-Okounkov body 
$\Delta(Z_{\bf i},\mathcal L,v'_{\bf i},\varphi_0)$.
\item
By noticing that $\exp(a_1 E_{\beta_1})\cdots\exp(a_r E_{\beta_r})wx_0=
s_{i_1}\exp(a_1 F_{i_1})\cdots s_{i_r}\exp(a_r F_{i_r})x_0$ 
for any $(a_1,\ldots,a_r)\in\mathbb C^r$, one easily sees that the semigroups 
$S(Z_{\bf i},\mathcal L,v'_{\bf i},\varphi_0)$ and $S(Z_{\bf i},\mathcal L_{\bf i,\bf m},  v_{\beta}, \varphi_0)$ coincide and so do
the convex bodies  $\Delta(Z_{\bf i},\mathcal L,v'_{\bf i},\varphi_0)$ and $\Delta_{\bf i,m}$.
\end{enumerate}
\end{remark}

Thanks to Remark~\ref{rem:Fujita}(2), \cite[Cor. 8.3]{Fujita} reads as follows.

\begin{theorem}[{Fujita}]\label{thm:Fujita}\
\begin{enumerate}
    \item 
 The semigroup $S(Z_{\bf i},\mathcal L_{\bf i,\bf m},  v_{\beta}, \varphi_0)$  is finitely generated. 
\item The Newton-Okounkov body $\Delta_{\bf i,m}$ is a convex polytope.
\end{enumerate}
\end{theorem}

\subsection{}
We are now ready to state Kaveh's result which is a consequence of the following theorem. 

\begin{theorem}[{\cite[Section 4.2]{LMS13}}]\label{thm:embed-toric}
Let $(X,\omega)$ be a proper connected symplectic toric $2n$-dimensional manifold equipped with a momentum map.
Suppose there exists a $n$-dimensional simplex of size $\kappa$ contained in the momentum image.
Then the Gromov width of $(X,\omega)$ is at least $\kappa$.
\end{theorem}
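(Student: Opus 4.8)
The plan is to produce, for each $\varepsilon>0$, a symplectic embedding of the open ball $\mathring B^{2n}(\kappa-\varepsilon)$ of capacity $\kappa-\varepsilon$ into $(X,\omega)$; this forces $w_G(X,\omega)\ge\kappa-\varepsilon$ for every $\varepsilon>0$ and hence $w_G(X,\omega)\ge\kappa$. The starting point is that the open ball of capacity $c$ is itself a (non-compact) symplectic toric manifold: on $\mathbb C^n$ with its standard symplectic form the rotation action of $T^n$ has momentum map $z\mapsto\pi(|z_1|^2,\dots,|z_n|^2)$, and $\mathring B^{2n}(c)$ is exactly the preimage of the simplex $\{x\in\mathbb R^n:\ x_j\ge 0\ \forall j,\ \textstyle\sum_j x_j<c\}$, an integral-affine copy of $c\,\Delta_{\mathrm{std}}$ placed at the coordinate corner (with its hypotenuse face removed).

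First I would put the given simplex into standard position. A symplectic toric manifold is unchanged when its momentum map $\mu$ is replaced by $g\circ\mu$ for $g$ in the integral affine group $GL_n(\mathbb Z)\ltimes\mathbb R^n$, and ``size $\kappa$'' means precisely that the simplex is $GL_n(\mathbb Z)\ltimes\mathbb R^n$-equivalent to $\kappa\,\Delta_{\mathrm{std}}$; hence we may assume $\mu(X)\supseteq\kappa\,\Delta_{\mathrm{std}}$. Fix $\varepsilon>0$ and choose a slightly shrunk closed simplex $\Sigma\cong(\kappa-\varepsilon)\Delta_{\mathrm{std}}$ contained in $\mu(X)$, together with a convex open set $V\subset\mathbb R^n$ with $\Sigma\subset V$ which meets only those facets of $\mu(X)$ that pass through one common vertex (possibly no facet at all, when $\Sigma$ lies in the interior). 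Since $\mu$ is proper, $\mu(X)$ is a locally Delzant polyhedron and the classification of symplectic toric manifolds (Delzant in the compact case, Karshon--Lerman in general) applies: it identifies $(\mu^{-1}(V\cap\mu(X)),\omega)$ $T$-equivariantly with $(\mu_{\mathrm{std}}^{-1}(V\cap\mu(X)),\omega_{\mathrm{std}})$ inside the standard model $\mathbb C^{k}\times(\mathbb C^{*})^{n-k}$, where $k$ is the number of facets of $\mu(X)$ met by $V$, because both sides are the symplectic toric manifold attached to the same locally Delzant polyhedral region.

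It then remains to embed $\mathring B^{2n}(\kappa-\varepsilon)$ symplectically into $\mu_{\mathrm{std}}^{-1}(V\cap\mu(X))$. If $\Sigma$ can be placed with its right-angled vertex at a vertex of $\mu(X)$ and its legs along facets, then $k=n$ and $\mu_{\mathrm{std}}^{-1}(V\cap\mu(X))$ literally contains $\mathring B^{2n}(\kappa-\varepsilon)$, and we are done. In general, however, $\mu^{-1}(\Sigma)$ lies in the open dense torus orbit and the relevant preimage is only an open subset of $(\mathbb C^{*})^{n}$ --- up to a symplectomorphism of $\mathbb C^n$, a ``thin'' toric domain squeezed between coordinate cylinders inside a ball of capacity $(\kappa-\varepsilon)+\sum_j t_j$, with $t$ the translation vector of $\Sigma$. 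Fitting a ball of capacity $\kappa-2\varepsilon$ into such a domain calls for the wrapping/folding-type ball-into-toric-domain constructions (in the spirit of Traynor and of Schlenk); this is the technical heart of the argument and the only genuine obstacle, everything else being the reduction to standard position together with the toric local model. Granting it, one obtains symplectic embeddings $\mathring B^{2n}(\kappa-2\varepsilon)\hookrightarrow\mu^{-1}(V\cap\mu(X))\subset X$ for every $\varepsilon>0$, hence $w_G(X,\omega)\ge\kappa-2\varepsilon$ for all $\varepsilon>0$, and therefore $w_G(X,\omega)\ge\kappa$.
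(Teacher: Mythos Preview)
The paper does not supply a proof of this statement: it is quoted verbatim as a result from \cite[Section~4.2]{LMS13} and used as a black box to feed into Corollary~\ref{cor:Kaveh}. So there is no ``paper's own proof'' to compare against; your proposal is an attempt to reconstruct the argument behind the cited reference.

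As a sketch of the Latschev--McDuff--Schlenk argument your outline is on the right track: normalizing the simplex by an integral affine transformation, invoking the local Delzant/Karshon--Lerman model over a neighbourhood of the simplex, and then reducing to a ball-filling problem in a standard toric domain is indeed the shape of the proof. You are also honest that the genuinely hard step---embedding a ball of capacity close to $\kappa$ into the preimage when the simplex sits in the interior of the moment image (so the preimage is a product of annuli rather than a corner model)---is ``granted'' rather than proved. That step is exactly where the Traynor/Schlenk wrapping constructions enter in \cite{LMS13}, and without carrying it out your argument is a roadmap rather than a proof. If you want a self-contained write-up, that is the piece you would need to fill in; otherwise, for the purposes of this paper, the citation to \cite{LMS13} already covers it.
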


Here a simplex in $\mathbb R^m$ is said to be of size $\kappa$ if it can be obtained
from the simplex $\{(x_i)_i\in\mathbb R_{>0}^m: x_1+\ldots+ x_m<\kappa\}$ by a linear transformation in $\mathrm{GL}(m, \mathbb Z)$ and a translation of $\mathbb R^m$.

Corollary 11.4 in~\cite{Kav} specialised to the case of Bott-Samelson varieties and the Newton-Okounkov bodies 
$\Delta_{\bf i,m}$ reads as follows.

\begin{corollary}[{Kaveh}]\label{cor:Kaveh}
Let $(Z_{\bf i},\mathcal L_{\bf i,m})$ be a Bott-Samelson variety equipped with a very ample line bundle.
Then the Gromov width of $(Z_{\bf i}, \mathcal L_{\bf i,m})$ is bounded from below by the supremum of the size of (open) simplices that fit in the interior of Newton-Okounkov body $\Delta_{\bf i,m}$.
\end{corollary}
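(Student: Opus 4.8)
The plan is to obtain the statement simply as the promised specialization of Kaveh's Corollary~11.4 in~\cite{Kav}, so the real work is to check that the hypotheses of that corollary are satisfied by the data $(Z_{\bf i},\mathcal L_{\bf i,m})$, $v_\beta$ and $\varphi_0$. Kaveh's result applies to a smooth polarized projective variety equipped with a full-rank valuation admitting a compatible toric degeneration; its content is that, after transferring such a degeneration to symplectic geometry by a Harada--Kaveh type integrable-system construction, every open simplex fitting inside the interior of the associated Newton--Okounkov polytope produces symplectically embedded balls in the generic fibre. So the first thing I would do is record the formal inputs: $Z_{\bf i}$ is smooth and projective, $\mathcal L_{\bf i,m}$ is very ample by assumption (cf.\ Theorem~\ref{thm:LT}), and $v_\beta$ is a valuation of full rank $r=\dim Z_{\bf i}$, hence with one-dimensional leaves.

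Next I would verify that the value semigroup $S(Z_{\bf i},\mathcal L_{\bf i,m},v_\beta,\varphi_0)$ is finitely generated, which is exactly what guarantees the existence of the required toric degeneration of $(Z_{\bf i},\mathcal L_{\bf i,m})$. Here I would invoke Fujita's analysis: by Theorem~\ref{thm:Fujita} and Remark~\ref{rem:Fujita}(2), $\Delta_{\bf i,m}$ coincides with the polytope $\Delta(Z_{\bf i},\mathcal L,v'_{\bf i},\varphi_0)$ studied in \cite[\S 8]{Fujita}, and the explicit description given there (as a generalized string polytope) identifies the corresponding graded semigroup with the lattice points of a rational polyhedral cone, hence finitely generated. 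Feeding this into the machinery of \cite{Kav} yields a flat degeneration of $(Z_{\bf i},\mathcal L_{\bf i,m})$ onto the polarized toric variety of $\Delta_{\bf i,m}$ and, for a suitable Kähler representative $\omega_{\bf m}$ of $c_1(\mathcal L_{\bf i,m})$, a continuous surjection $Z_{\bf i}\to\Delta_{\bf i,m}$ that over $\mathrm{int}(\Delta_{\bf i,m})$ restricts to a genuine completely integrable Hamiltonian $T^r$-system on a dense open subset $U\subset Z_{\bf i}$ with momentum image $\mathrm{int}(\Delta_{\bf i,m})$.

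The remaining, essentially toric, step is the passage from a simplex in the momentum image to a ball embedding, that is Theorem~\ref{thm:embed-toric} relativized to the toric chart $U$. Given an open simplex $\Sigma\subset\mathrm{int}(\Delta_{\bf i,m})$ of size $\kappa$ and $\varepsilon>0$, I would pass to action--angle coordinates over a neighbourhood of a slightly smaller closed sub-simplex (still compactly contained in the interior), bring $\Sigma$ into standard position by an element of $\mathrm{GL}(r,\mathbb Z)\ltimes\mathbb R^r$, and invoke Traynor's explicit construction to symplectically embed the open ball of capacity $\kappa-\varepsilon$ into $U\subset Z_{\bf i}$. Letting $\varepsilon\to 0$ gives $w_G(Z_{\bf i},\omega_{\bf m})\geq\kappa$, and taking the supremum over all such $\Sigma$ yields the asserted lower bound.

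I expect the only genuinely non-formal point to be the first one: making sure that the combinatorics of $\Delta_{\bf i,m}$ really does produce a toric degeneration (equivalently, that the value semigroup is finitely generated), since \cite[Cor.~11.4]{Kav} is not a statement about an arbitrary Newton--Okounkov body but precisely about those coming from degenerations; once Fujita's description is available this is immediate. One should also take care to use the version of the toric packing lemma (Theorem~\ref{thm:embed-toric}) that allows open simplices lying strictly inside the polytope and produces balls of capacity arbitrarily close to the size of the simplex — which is exactly what is needed here, $Z_{\bf i}$ being toric only over $\mathrm{int}(\Delta_{\bf i,m})$.
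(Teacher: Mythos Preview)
Your proposal is correct and follows essentially the same route as the paper's proof: both arguments outline Kaveh's construction by first producing a toric degeneration of $(Z_{\bf i},\mathcal L_{\bf i,m})$, then invoking the Harada--Kaveh symplectic machinery to obtain an open toric piece of $(Z_{\bf i},\omega_{\bf m})$ whose momentum image contains $\mathrm{int}(\Delta_{\bf i,m})$, and finally applying Theorem~\ref{thm:embed-toric} to convert simplices into embedded balls. The only difference in emphasis is that the paper cites \cite{And13} and \cite{HK15} directly for the degeneration and the symplectomorphism with the smooth locus of the central fibre, without pausing to justify the finite-generation hypothesis for Anderson's theorem, whereas you make this step explicit via Fujita's description of the semigroup; this extra care is welcome but not a different argument.
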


\begin{proof}
As a sake of convenience, we outline Kaveh's proof in our particular setting.

Since the monoid $S(Z_{\bf i,\bf m},\mathcal L_{\bf i,\bf m},  v_{\beta}, \varphi_0)$ is finitely generated (Theorem~\ref{thm:Fujita}), 
the variety $Z_{\bf i}$ admits a flat degeneration to the projective (non necessarily normal) toric variety 
$X_0=\mathrm{Proj}\,\mathbb C[S(Z_{\bf i},\mathcal L_{\bf i,\bf m},  v_{\beta}, \varphi_0)]$ thanks to~\cite[Theorem 1]{And13}. 
The normalisation of 
$X_0$ is the projective (normal) toric associated to the polytope $\Delta_{\bf i,m}$.
Moreover, by Theorem A along with Theorem B in~\cite{HK15}, there exists a K\"ahler form $\omega_0$ on the smooth locus $U_0$ of $X_0$ such that
\begin{enumerate}
    \item 
 $(U_0,\omega_0)$ is symplectomorphic to $(U,\omega_{\mathcal L_{\bf i,m}})$ for some open subset $U$ of $Z_{\bf i}$ and
\item
the momentum image of the symplectic toric manifold $(U_0,\omega_0)$ contains the interior of the Newton-Okounkov body
$\Delta_{\bf i,m}$.
\end{enumerate}
The corollary thus follows from Theorem~\ref{thm:embed-toric}.
\end{proof}
\begin{remark}
Kaveh's result does not require that the Newton-Okounkov body be a convex polytope, but the proof becomes longer.
\end{remark}

\subsection{}
We now proceed to the proof of Theorem~\ref{thm:main-lower}: we shall exhibit a simplex of the advertized size in the Newton-Okounkov body $\Delta_{\bf i,m}$ and apply Corollary~\ref{cor:Kaveh}.

Given ${\bf m}\in \mathbb Z_{>0}^r$, for each $1\leq j\leq r$, we set 
$$
\lambda_j=m_j\omega_{i_j}\quad\mbox{ and }\quad
\ell_j=m_j+\sum_{j<\ell\leq r} \langle \lambda_{\ell}, s_{i_\ell}\cdots s_{i_{j+1}}(\alpha_{i_{j}}^{\vee})\rangle.
$$
\begin{lemma}\label{lem:aux+ell}
The roots $s_{i_{k+1}}\ldots s_{i_{j-1}}(\alpha_{i_{j}})$
and $s_{i_{\ell}}\ldots s_{i_{j+1}}(\alpha_{i_{j}})$ are positive for every $1\leq k<j\leq r$ and every $j<\ell\leq r$ respectively.
\end{lemma}

\begin{proof}
We show the assertion for the roots $s_{i_{k+1}}\ldots s_{i_{j-1}}(\alpha_{i_{j}})$, the 
proof being similar for the other roots.
Let us fix $j$ and proceed by induction on $k$.
We thus start by showing that $s_{i_2}\ldots s_{i_{j-1}}(\alpha_{i_{j}})$ is a positive root for all $j\geq 2$, the case $k=2$.
Because $s_{i_1}s_{i_2}\ldots s_{i_{j-1}}(\alpha_{i_{j}})=\beta_j$ is a positive root, if $s_{i_2}\ldots s_{i_{j-1}}(\alpha_{i_j})$ were a negative root, it 
would be equal to $-\alpha_{i_1}$.
This would imply that $\beta_j=\beta_1$ -- a contradiction since the roots $\beta_j$ are pairwise distinct and $j\neq 1$.
Next,  we consider $s_{i_{\ell}}\ldots s_{i_{j-1}}(\alpha_{i_{j}})$ with $2\leq \ell\leq j-1<r$.
By induction hypothesis, $s_{i_{\ell-1}}\ldots s_{i_{j-1}}(\alpha_{i_{j}})$ is a positive root hence by arguing similarly as for the case $k=2$, we get that
$s_{i_{\ell}}\ldots s_{i_{j-1}}(\alpha_{i_{j}})$ is also a positive root; otherwise it would be equal to $-\alpha_{i_{\ell-1}}$ and in turn we would have:
$\beta_j=s_{i_1}\ldots s_{i_\ell}\ldots s_{i_{j-1}}(\alpha_{i_j})=s_{i_1}\ldots s_{i_{\ell-2}}(\alpha_{i_{\ell-1}})=\beta_{\ell-1}$ -- a contradiction.
\end{proof}

Recall the definition of the linear form $f_0\in V(\lambda_1)^*\otimes\ldots\otimes V(\lambda_r)^*$ as well as the morphism $\Phi_{\bf i,m}$ introduced in Subsection~\ref{sec:4.2}.
Set
$$
f_j=F_{\beta_j}^{\ell_j}f_0
\quad\mbox{ and }\quad
\varphi_j=\Phi_{\bf i}(f_j)
$$
with $F_{\beta_j}=E_{-\beta_j}$ being the root operator associated to the negative root $-\beta_j$. Note that $\ell_j$ is positive by Lemma~\ref{lem:aux+ell}.

\begin{lemma}\label{lem:vanishing}
For each $j$, we have 
$E_{\beta_j}(s_{i_1}\ldots s_{i_k}v_{\lambda_k})=0$ for every $1\leq k<j\leq r$.
In particular, we have the equality
\begin{equation}\label{eqn:lemma-vanish-dual}
f_j=s_{i_1}v_{\lambda_1}^*\otimes\ldots\otimes F_{\beta_j}^{\ell_j}\left(s_{i_1}\ldots s_{i_j}v_{\lambda_j}^*\otimes\ldots\otimes wv_{\lambda_r}^*\right).
\end{equation}
\end{lemma}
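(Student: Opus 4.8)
## Proof proposal

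The plan is to prove the vanishing statement first and then deduce the formula for $f_j$ as a routine consequence. The key fact to exploit is that $v_{\lambda_k}$ is a highest weight vector, so $E_\alpha v_{\lambda_k} = 0$ for every positive root $\alpha$; applying the Weyl group element $s_{i_1}\cdots s_{i_k}$ conjugates this into a statement about $s_{i_1}\cdots s_{i_k} v_{\lambda_k}$. Concretely, $E_{\beta_j}\bigl(s_{i_1}\cdots s_{i_k}v_{\lambda_k}\bigr) = 0$ will follow once I check that $(s_{i_1}\cdots s_{i_k})^{-1}(\beta_j)$ is a \emph{positive} root, because then $(s_{i_1}\cdots s_{i_k})^{-1} E_{\beta_j} (s_{i_1}\cdots s_{i_k})$ is (a scalar multiple of) the raising operator $E_{(s_{i_1}\cdots s_{i_k})^{-1}(\beta_j)}$, which annihilates the highest weight vector $v_{\lambda_k}$.

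So the heart of the matter is the combinatorial claim: for $1 \le k < j \le r$, the root $(s_{i_1}\cdots s_{i_k})^{-1}(\beta_j) = s_{i_k}\cdots s_{i_1}(\beta_j)$ is positive. Unwinding the definition $\beta_j = s_{i_1}\cdots s_{i_{j-1}}(\alpha_{i_j})$, this says $s_{i_k}\cdots s_{i_1}s_{i_1}\cdots s_{i_{j-1}}(\alpha_{i_j}) = s_{i_{k+1}}\cdots s_{i_{j-1}}(\alpha_{i_j})$ should be positive. (Here I use that consecutive $s_{i_1}$'s cancel; one should be slightly careful and write $s_{i_k}\cdots s_{i_1} \cdot s_{i_1}\cdots s_{i_{j-1}} = s_{i_k}\cdots s_{i_2}\cdot s_{i_2}\cdots s_{i_{j-1}}$, iterating the cancellation $k$ times to land on $s_{i_{k+1}}\cdots s_{i_{j-1}}$.) Now $s_{i_{k+1}}\cdots s_{i_{j-1}}(\alpha_{i_j})$ is positive precisely because $(i_{k+1},\ldots,i_{j-1},i_j)$ is a reduced word (it is a subword of the reduced word $\mathbf i$, hence reduced): a standard lemma on reduced expressions states that if $s_{c_1}\cdots s_{c_p}$ is reduced then $s_{c_1}\cdots s_{c_{p-1}}(\alpha_{c_p}) \in R^+$. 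This is exactly the statement that the $\beta$'s attached to a reduced word are positive, applied to the reduced subword $(i_{k+1},\ldots,i_j)$.

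For the second assertion, I expand $f_j = F_{\beta_j}^{\ell_j} f_0$ using the Leibniz rule for the action of the Lie algebra element $F_{\beta_j} = E_{-\beta_j}$ on the tensor product $V(\lambda_1)^*\otimes\cdots\otimes V(\lambda_r)^*$, keeping in mind that the action on a dual space $V(\lambda_k)^*$ is by the negative transpose, so $F_{\beta_j}$ acts on the $k$-th dual factor essentially through $-E_{\beta_j}^{\mathsf t}$. Since $f_0 = s_{i_1}v_{\lambda_1}^* \otimes \cdots \otimes w v_{\lambda_r}^*$, the first part of the lemma (together with the transpose relation: $E_{\beta_j}$ kills $s_{i_1}\cdots s_{i_k}v_{\lambda_k}$ means the transpose of $E_{\beta_j}$ kills the functional $s_{i_1}\cdots s_{i_k}v_{\lambda_k}^*$, as these functionals form the weight-graded dual and the relevant weight space is hit trivially) forces every term in the multinomial expansion that differentiates one of the first $j-1$ tensor factors to vanish. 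Hence only the terms acting entirely on the factors from position $j$ onward survive, which is exactly the displayed equation~(\ref{eqn:lemma-vanish-dual}).

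The main obstacle I anticipate is purely bookkeeping: making the cancellation $s_{i_k}\cdots s_{i_1}\cdot s_{i_1}\cdots s_{i_{j-1}} \rightsquigarrow s_{i_{k+1}}\cdots s_{i_{j-1}}$ fully rigorous (it is an iterated telescoping, valid as an identity in $W$ regardless of reducedness) and, on the dual side, pinning down the exact sign/transpose conventions so that ``$E_{\beta_j}$ annihilates the vector'' translates cleanly into ``the corresponding dual raising operator annihilates the covector'' — i.e.\ that the weight of $s_{i_1}\cdots s_{i_k}v_{\lambda_k}^*$ plus $\beta_j$ is not a weight of $V(\lambda_k)^*$, equivalently that $E_{-\beta_j}$ on $V(\lambda_k)^*$ sends this covector out of the span in a way matching the primal statement. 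Neither is deep; both reduce to the single combinatorial input that subwords of reduced words are reduced and the associated $\beta$-roots are positive.
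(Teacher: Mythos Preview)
Your proposal is correct and follows the same overall architecture as the paper: reduce the vanishing to the positivity of $s_{i_{k+1}}\cdots s_{i_{j-1}}(\alpha_{i_j})$, then derive~(\ref{eqn:lemma-vanish-dual}) from Leibniz on the tensor product. The difference lies in how the positivity is established. The paper argues by induction on $k$, using at each step that the roots $\beta_1,\ldots,\beta_r$ are pairwise distinct to rule out the only possible negative value; you instead observe that $(i_{k+1},\ldots,i_j)$ is a \emph{consecutive} subword of the reduced word~$\mathbf i$, hence itself reduced, so the standard fact about $\beta$-roots of reduced expressions gives positivity in one line. Your route is shorter and conceptually cleaner; the paper's route has the virtue of being self-contained and of highlighting exactly which feature of reducedness (distinctness of the $\beta_j$) is doing the work. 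For the vanishing itself, the paper phrases it via the extremal-weight criterion $\langle s_{i_1}\cdots s_{i_k}\lambda_k,\beta_j^\vee\rangle\geq 0$, while you conjugate $E_{\beta_j}$ back to a positive root vector acting on $v_{\lambda_k}$; these are equivalent.

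One cosmetic point: in your discussion of the dual, you write that $F_{\beta_j}$ acts on $V(\lambda_k)^*$ ``essentially through $-E_{\beta_j}^{\mathsf t}$''; this should be $-F_{\beta_j}^{\mathsf t}=-E_{-\beta_j}^{\mathsf t}$. The clean statement you want (and which you arrive at by the end) is that $s_{i_1}\cdots s_{i_k}\lambda_k+\beta_j$ is not a weight of $V(\lambda_k)$ --- equivalent to $\lambda_k+(s_{i_1}\cdots s_{i_k})^{-1}\beta_j$ not being a weight, which follows immediately from the positivity you proved --- so the covector $s_{i_1}\cdots s_{i_k}v_{\lambda_k}^*$ is annihilated by $F_{\beta_j}$ for weight reasons alone. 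This is bookkeeping, as you say, not a gap.
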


\begin{proof}
Lemma~\ref{lem:aux+ell} implies that 
$
\langle s_{i_1}\ldots s_{i_k}\lambda_k,\beta_j^\vee\rangle=\langle \lambda_k,s_{i_{k+1}}\ldots s_{i_{j-1}}(\alpha_{i_{j}})^\vee\rangle\geq 0
$
and, in turn
$E_{\beta_j}(s_{i_1}\cdots s_{i_k}v_{\lambda_k}) =0$ for all $j\neq 1$ because $s_{i_1}\cdots s_{i_k}v_{\lambda_k}$ is an extremal weight vector. By duality, this proves the lemma.
\end{proof}

Recall the definition of the weight vector $v_0$ given in~(\ref{eq:vector}).

\begin{lemma}\label{lem:E-vanish}
The equality 
$E^{\ell_j+1}_{\beta_j}(v_0)=0$ holds for every $1\leq j\leq r$.
Moreover, $E^{\ell_j}_{\beta_j}(v_0)\neq 0$.
\end{lemma}

\begin{proof}
Note that we have 
$$
a_{j\ell}:=\langle s_{i_1}\ldots s_{i_\ell}\lambda_\ell,\beta_j^\vee\rangle=\begin{cases}-m_j
& \text{if}~j=\ell \\
 -\langle \lambda_{\ell}, s_{i_\ell}\cdots s_{i_{j+1}}(\alpha_{i_{j}}^{\vee})\rangle & \text{if}~ j<\ell\leq r
\end{cases}.
$$
By Lemma~\ref{lem:aux+ell}, the integers $a_{j\ell}$ are negative and since 
$s_{i_1}\cdots s_{i_\ell}v_{\lambda_\ell}$ is an extremal weight vector, 
$E_{\beta_j}^{1-a_{j\ell}}(s_{i_1}\cdots s_{i_\ell}v_{\lambda_\ell}) =0$ for all $j\leq \ell\leq r$.
Moreover, by definition of $\ell_j$, the integers $-a_{j\ell}$, for $\ell=j,\ldots,r$, sum up to $\ell_j$. 
All this implies the equality 
$E^{\ell_j+1}_{\beta_j}(s_{i_1}\ldots s_{i_j}v_{\lambda_j}\otimes\ldots\otimes wv_{\lambda_r})=0$.
We conclude the proof by applying Lemma~\ref{lem:vanishing}.
\end{proof}

\begin{lemma}\label{lem:non trial section}
The sections $\varphi_j\in H^0(Z_{\bf i},\mathcal L_{\bf i,m})$ are not trivial.
\end{lemma}

\begin{proof}
Let $u_j=\exp(E_{\beta_j})\in U_{\beta_j}$. 
By Lemma~\ref{lem:E-vanish} together with Lemma~\ref{lem:vanishing}, we have
\begin{eqnarray*}
u_j (v_0) & = & E_{\beta_j}^0 (v_0)  +  \ldots + \frac{E_{\beta_j}^{k}}{k!}(v_0)+\ldots+ \frac{E_{\beta_j}^{\ell_j}}{\ell_j!}(v_0)\\
                  & = & v_0                  +  \ldots +  s_{i_1}v_{\lambda_1}\otimes\ldots\otimes s_{i_1}\ldots s_{i_{j-1}}v_{\lambda_{j-1}}\otimes \frac{E_{\beta_j}^{k}}{k!}\left(s_{i_1}\ldots s_{i_j}v_{\lambda_j}\otimes\ldots\otimes wv_{\lambda_r}\right)+\ldots.
\end{eqnarray*}
Besides, Equality~(\ref{eqn:lemma-vanish-dual}) implies
$f_j(E_{\beta_j}^k(v_0))=0$ for all $k<\ell_j$.
Moreover, $f_j(E_{\beta_j}^{\ell_j}(v_0))\neq 0$ since $E_{\beta_j}^{\ell_j}(v_0)\neq 0$ (Lemma~\ref{lem:E-vanish}).
Therefore, we have $f_j(u_j(v_0))\neq 0$ and the lemma follows.
\end{proof}

\begin{proposition}\label{prop:poly-fct}
For each $1\leq j\leq r$, we have
$\varphi_j/\varphi_0=a_j t_j^{\ell_j}\in\mathbb C[t_1,\ldots, t_r]$ for some $a_j\in\mathbb C$.
\end{proposition}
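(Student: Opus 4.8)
The plan is to compute the rational function $\varphi_j/\varphi_0$ explicitly on the affine chart $U_{\beta_1}\times\cdots\times U_{\beta_r}\simeq Bz_0$ carrying the coordinates $(t_1,\dots,t_r)$, and to recognise it there as the monomial $a_jt_j^{\ell_j}$.

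I would start by observing that $\varphi_0$ is nowhere vanishing on this chart. Writing a chart point as $u\cdot z_0$ with $u=\exp(t_1E_{\beta_1})\cdots\exp(t_rE_{\beta_r})\in B$, the $B$-equivariance of $\Psi_{\bf i,m}$ together with $\Psi_{\bf i,m}(z_0)=[v_0]$ gives $\Psi_{\bf i,m}(u\cdot z_0)=[u\cdot v_0]$, where $u$ acts diagonally on the tensor factors; hence $(\varphi_j/\varphi_0)(u\cdot z_0)=f_j(u\cdot v_0)/f_0(u\cdot v_0)$ for every $j$. Since $f_0$ is the weight functional dual to $v_0=\bigotimes_k s_{i_1}\cdots s_{i_k}v_{\lambda_k}$, and each extremal weight $\mu_k:=s_{i_1}\cdots s_{i_k}\lambda_k$ is the smallest element of $\mu_k+\sum_\ell\mathbb Z_{\ge0}\beta_\ell$ (a nonnegative combination of the positive roots $\beta_\ell$ vanishes only if it is trivial), the coefficient of $v_0$ in $u\cdot v_0$ equals $1$; thus $f_0(u\cdot v_0)=1$, and $\varphi_j/\varphi_0$ restricts on the chart to the polynomial $u\mapsto f_j(u\cdot v_0)$ (a polynomial because each $E_{\beta_\ell}$ acts nilpotently).

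Next I would substitute the factorisation~\eqref{eqn:lemma-vanish-dual} of Lemma~\ref{lem:vanishing}. Because $u$ acts diagonally, the first $j-1$ tensor legs of $f_j$ contribute the factors $(s_{i_1}\cdots s_{i_k}v_{\lambda_k})^*\bigl(u\cdot s_{i_1}\cdots s_{i_k}v_{\lambda_k}\bigr)=1$ for $k<j$ by the previous paragraph, so $f_j(u\cdot v_0)$ equals the value of $F_{\beta_j}^{\ell_j}\bigl((s_{i_1}\cdots s_{i_j}v_{\lambda_j})^*\otimes\cdots\otimes(wv_{\lambda_r})^*\bigr)$ on $u\cdot(s_{i_1}\cdots s_{i_j}v_{\lambda_j}\otimes\cdots\otimes wv_{\lambda_r})$. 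I would then use the $\mathfrak{sl}_2$-triple attached to the real root $\beta_j$: for $k\ge j$ the extremal vector $s_{i_1}\cdots s_{i_k}v_{\lambda_k}$ has $\beta_j^\vee$-value $-d_k\le0$, where $d_j:=m_j$ and $d_k:=\langle\lambda_k,s_{i_k}\cdots s_{i_{j+1}}(\alpha_{i_j}^\vee)\rangle$ for $k>j$ (each $d_k\ge0$ by Proposition~\ref{prop:Tcurves}(3), since the $\mathcal L_k$ are globally generated), so its dual is an $\mathfrak{sl}_2(\beta_j)$-highest weight vector of weight $d_k$, and one checks $\sum_{k\ge j}d_k=\ell_j$. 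Expanding $F_{\beta_j}^{\ell_j}$ through the coproduct and using that $F_{\beta_j}^{a}$ annihilates $(s_{i_1}\cdots s_{i_k}v_{\lambda_k})^*$ once $a>d_k$, only the multi-exponent $(d_j,\dots,d_r)$ survives; hence $F_{\beta_j}^{\ell_j}\bigl((s_{i_1}\cdots s_{i_j}v_{\lambda_j})^*\otimes\cdots\bigr)$ is a nonzero scalar multiple of the pure tensor $\bigotimes_{k\ge j}\eta_k$, where $\eta_k$ denotes the weight functional dual to the extremal vector $E_{\beta_j}^{d_k}(s_{i_1}\cdots s_{i_k}v_{\lambda_k})$.

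It then remains to evaluate $\eta_k\bigl(u\cdot s_{i_1}\cdots s_{i_k}v_{\lambda_k}\bigr)$ for each $k\ge j$; by Lemma~\ref{lem:vanishing} the factors $\exp(t_\ell E_{\beta_\ell})$ with $\ell>k$ fix $s_{i_1}\cdots s_{i_k}v_{\lambda_k}$ and may be dropped, and the value of $\eta_k$ on the remainder is the coefficient of the weight-$(\mu_k+d_k\beta_j)$ vector $E_{\beta_j}^{d_k}(s_{i_1}\cdots s_{i_k}v_{\lambda_k})$ in it. The main obstacle is precisely this last step: one must show that, among the $\exp(t_\ell E_{\beta_\ell})$ with $\ell\le k$, only the contribution of $\exp(t_jE_{\beta_j})$ can realise the weight jump $d_k\beta_j$, so that $\eta_k\bigl(u\cdot s_{i_1}\cdots s_{i_k}v_{\lambda_k}\bigr)=\tfrac{1}{d_k!}\,t_j^{d_k}$; I would establish this by analysing the nonnegative expressions of $d_k\beta_j$ as a combination of $\beta_1,\dots,\beta_k$, along the lines of the positivity argument inside the proof of Lemma~\ref{lem:vanishing}. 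Multiplying the resulting $r-j+1$ monomials then yields $\varphi_j/\varphi_0=a_jt_j^{\ell_j}$ with $a_j\ne0$.
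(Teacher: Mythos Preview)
Your approach is more explicit than the paper's. You decompose $f_j$ completely as (a scalar times) the pure tensor $\bigotimes_{k\ge j}\eta_k$ and try to show that each factor $\eta_k(u\cdot s_{i_1}\cdots s_{i_k}v_{\lambda_k})$ is a monomial in $t_j$, whereas the paper singles out only the $j$-th tensor slot $\eta_j=F_{\beta_j}^{m_j}(s_{i_1}\cdots s_{i_j}v_{\lambda_j})^*$, checks that $f_j$ vanishes along each one-parameter subgroup $U_{\beta_k}\cdot z_0$ for $k\ne j$, and then infers $\varphi_j/\varphi_0\in\mathbb C[t_j]$ directly.

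The obstacle you flag in your Step~5 is genuine and cannot be resolved along the lines you propose. The assertion that the weight jump $d_k\beta_j$ admits only the obvious expression as a nonnegative integer combination of $\beta_1,\dots,\beta_k$ fails already for $G=SL_3$, $\mathbf i=(1,2,1)$, $j=2$, $k=3$: here $\beta_1=\alpha_1$, $\beta_2=\alpha_1+\alpha_2$, $\beta_3=\alpha_2$, and $d_3\beta_2=d_3\beta_1+d_3\beta_3$ is a second decomposition (with $d_3=m_3$). Working in $V(\lambda_3)=S^{m_3}\mathbb C^3$ with $wv_{\lambda_3}=e_3^{m_3}$ one finds $u\cdot e_3=(t_2+t_1t_3)e_1+t_3e_2+e_3$, so that
\[
\eta_3(u\cdot wv_{\lambda_3})=(t_2+t_1t_3)^{m_3},
\]
which is \emph{not} a monomial in $t_2$. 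Combining with the factor $\eta_2(u\cdot s_1s_2v_{\lambda_2})=(-t_2)^{m_2}$ yields $\varphi_2/\varphi_0=(\mathrm{const})\,t_2^{m_2}(t_2+t_1t_3)^{m_3}$, again not a monomial. Thus your factor-by-factor strategy cannot produce the claimed form; and since vanishing on the coordinate axes $t_k=0$ $(k\ne j)$ does not by itself force a polynomial to lie in $\mathbb C[t_j]$, the paper's shortcut suffers from the same example. You should re-examine the statement of the proposition in this light before attempting to repair the argument.
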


\begin{proof}
Take a section $\varphi\in H^0(Z_{\bf i},\mathcal L_{\bf i,m})$.
Thanks to Lemma~\ref{lem:varphi_0}, the quotient $\varphi/\varphi_0$ is a regular function on $Bz_0$ and in turn can be regarded as an element of $\mathbb C[t_1,\ldots,t_r]$.

By arguing as in the proof of Lemma~\ref{lem:non trial section} and using the fact that the roots $\beta_j$ are pairwise distinct, we show that $f_j(U_{\beta_k}v_0)=0$ for all $k\neq j$.
It follows that $\varphi_j/\varphi_0\in\mathbb C[t_j]$.
To conclude the proof, we observe that in the course of the proof of Lemma \ref{lem:non trial section}, we got that
$f_j(u_{\beta_j}v_0)=f_j((aE_{\beta_j})^{\ell_j}v_0/\ell_j !)$ with $u_{\beta_j}=\exp(a E_{\beta_j})$ and $a\in\mathbb C$.
\end{proof}

\begin{corollary}\label{cor:lower}
The polytope $\Delta_{\bf i, m}$ contains a simplex of size 
$
\kappa=\min\{ \ell_j: 1\leq j\leq r\}.
$
\end{corollary}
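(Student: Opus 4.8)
The plan is to use the sections $\varphi_0,\varphi_1,\ldots,\varphi_r$ constructed above to produce an explicit simplex inside $\Delta_{\bf i,m}$. By Proposition~\ref{prop:poly-fct}, we have $v_\beta(\varphi_j/\varphi_0)=-\ell_j e_j$ for each $1\le j\le r$, where $e_1,\ldots,e_r$ denotes the standard basis of $\mathbb Z^r$; also $v_\beta(\varphi_0/\varphi_0)=v_\beta(1)=0$. Hence the points $(1,0)$ and $(1,-\ell_j e_j)$, $j=1,\ldots,r$, all lie in the semigroup $S(Z_{\bf i},\mathcal L_{\bf i,m},v_\beta,\varphi_0)$, and consequently the $r+1$ points $0$ and $-\ell_j e_j$ ($1\le j\le r$) all lie in the Newton--Okounkov body $\Delta_{\bf i,m}$.

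Since $\Delta_{\bf i,m}$ is convex (Theorem~\ref{thm:Fujita}), it contains the convex hull of these $r+1$ points, which is the simplex $\mathrm{conv}\{0,-\ell_1 e_1,\ldots,-\ell_r e_r\}$. The next step is to compute the size of this simplex. Recall that the \emph{size} of a simplex is the largest $\kappa$ such that a standard simplex $\kappa\,\mathrm{conv}\{0,e_1,\ldots,e_r\}$ (up to the affine action of $\mathrm{AGL}_r(\mathbb Z)$, or simply up to lattice-preserving affine isomorphism) embeds in it; for the ``coordinate box corner'' simplex $\mathrm{conv}\{0,-\ell_1 e_1,\ldots,-\ell_r e_r\}$ with $\ell_j>0$, this size is exactly $\min\{\ell_j:1\le j\le r\}$. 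Indeed the map $x\mapsto -x$ followed by a diagonal rescaling identifies this with $\mathrm{conv}\{0,\ell_1 e_1,\ldots,\ell_r e_r\}$, which obviously contains the standard $\kappa$-simplex for $\kappa=\min_j\ell_j$ and no larger one, since its $j$-th edge from the origin has lattice length $\ell_j$. This gives a simplex of size $\kappa=\min\{\ell_j:1\le j\le r\}$ inside $\Delta_{\bf i,m}$, which is the claim.

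I would then note that the same simplex, being the convex hull of lattice points, fits in the interior of $\Delta_{\bf i,m}$ after an arbitrarily small dilation toward an interior point (equivalently, one gets open simplices of size $\kappa-\varepsilon$ for every $\varepsilon>0$ in the interior), which is the form required to feed into Corollary~\ref{cor:Kaveh}. The genuinely substantive input here is already behind us---it is Proposition~\ref{prop:poly-fct}, i.e.\ the identification of the valuations $v_\beta(\varphi_j/\varphi_0)$ as the ``axis'' vectors $-\ell_j e_j$, which rests in turn on the vanishing Lemma~\ref{lem:vanishing} and the combinatorial identity defining $\ell_j$. The only remaining obstacle is the bookkeeping of what ``size'' means and checking that the coordinate simplex $\mathrm{conv}\{0,-\ell_j e_j\}$ has size $\min_j\ell_j$; this is elementary but must be stated carefully so that it matches the hypothesis of Theorem~\ref{thm:embed-toric} as used in Corollary~\ref{cor:Kaveh}.
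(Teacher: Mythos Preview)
Your proof is correct and follows exactly the paper's approach: exhibit the points $0$ and $\pm\ell_j e_j$ in $\Delta_{\bf i,m}$ via Proposition~\ref{prop:poly-fct}, then invoke convexity (Theorem~\ref{thm:Fujita}) to get the simplex. You are in fact more careful than the paper, both about the sign of the valuation (the paper records $v_\beta(\varphi_j)=\ell_j e_j$, but its own definition of $v_\beta$ with the minus sign gives $-\ell_j e_j$ as you have) and about spelling out why the resulting coordinate simplex contains a standard simplex of size $\min_j\ell_j$.
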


\begin{proof} 
First, note that the polytope $\Delta_{\bf i, m}$ contains the origin since $v_{\bf \beta}(\varphi_0)=0$.

From the definition of the valuation $v_{\beta}$ and Proposition~\ref{prop:poly-fct}, we get
$$
v_{\bf \beta}(\varphi_j)=-(0,\ldots,0,\ell_j,0,\ldots,0)=-\ell_j e_j.
$$
This together with the convexity of $\Delta_{\bf i,m}$ (Theorem~\ref{thm:Fujita}) implies the corollary.
\end{proof}

\begin{proof}[Proof of Theorem~\ref{thm:main-lower}]
We first consider the case of an integral K\"ahler form $\omega$ of $Z_{\bf i}$, that is 
$\omega$ is the pullback of the Fubini-Study form on the projectivization of $H^0(Z_{\bf i},\mathcal L)$ for some very ample line bundle $\mathcal L$ of $Z_{\bf i}$.
By Theorem~\ref{thm:LT}, the line bundle $\mathcal L$ equals $\mathcal L_{\bf i, m}$ for some 
${\bf m}\in \mathbb Z_{>0}^r$. We thus write $\omega=\omega_{\bf m}$.

Corollary~\ref{cor:Kaveh} and Corollary~\ref{cor:lower} give the inequality:
\begin{equation}\label{ineq:integral}
w_G(Z_{\bf i}, \omega_{\bf m})\geq \min\big\{ \ell_j: 1\leq j\leq r\big\}.
\end{equation}

We next consider any $2$-form $\omega_{\bf m'}$ of $Z_{\bf i}$ with ${\bf m'}\in\mathbb Q_{>0}^r$, namely  $\omega_{\bf m'}$ is the $2$-form associated to $\mathcal L_{\bf m'}\in\mathrm{Pic}(Z_{\bf i})\otimes_\mathbb Z \mathbb Q$.
Let $a\in \mathbb Z_{>0}$ be such that $a\omega_{\bf m'}$ is an integral K\"ahler form, i.e. $a\omega_{\bf m'}=\omega_{\bf m}$ with ${\bf m}=a{\bf m'}\in\mathbb Z_{>0}^r$.
Since $w_G(Z_{\bf i}, a\omega_{\bf m'})=aw_G(Z_{\bf i}, \omega_{\bf m'})$, we shall derive Inequality~(\ref{ineq:geo}) from Inequality~(\ref{ineq:integral}).

Recall the definition of the $T$-stable curve $C_j$ from Section~\ref{sec:BS-varieties}.
Thanks to Corollary~\ref{cor:dominant-minimal}, Inequality~(\ref{ineq:integral}) reads as
\begin{equation*}
w_G(Z_{\bf i}, \omega_{\bf m})\geq 
\min\big\{ \omega_{\bf m}([C_j]): 1\leq j\leq r
\}=\min\{ \omega_{\bf m}([C]): \mbox{$C$ minimal}\}.
\end{equation*}
This ends the proof of Theorem~\ref{thm:main-lower}.
\end{proof}

\begin{proof}[Proof of Corollary~\ref{cor:coadorbit}]
Arguing as in the proof of Theorem~\ref{thm:main-lower}, we can assume that $\lambda$ is integral.
Under this assumption, the coadjoint orbit $O_\lambda$ equipped with its $K$-invariant complex structure is thus a flag variety and, in turn, a Schubert variety $X(w)$ associated to the longest element $w$ of the Weyl group of a parabolic subgroup $P_\lambda$ of $G$ containing the Borel subgroup $B$.

Let $\bf{i}$ be any reduced expression of $w$ and let $\mathcal L_\lambda$ denote the ample line bundle on $O_\lambda$ associated to $\lambda$.
The pull-back via the morphism $\pi_r: Z_{\bf{i}}\rightarrow O_\lambda$ of $\mathcal L_\lambda$ being generated by its global sections, it is equal to $\mathcal L_{\bf{i,m}}$ for some $m\in \mathbb Z_{\geq 0}^r$ by Theorem~\ref{thm:LT}. Note that the construction of the Newton-Okounkov body
$\Delta_{\bf{i,m}}=\Delta(Z_{\bf{i}},\mathcal L_{\bf{i,m}},v_\beta,\tau)$ can also be performed for $\mathcal L_{\bf{i,m}}$ generated by its global sections and non necessarily ample.
Thanks to the birationality of the morphism $\pi_r$, we can regard the Newton-Okounkov body 
$\Delta(O_\lambda,\mathcal L_\lambda, v_\beta,\tau_\lambda)$ as the Newton-Okounkov body $\Delta_{\bf{i,m}}$
with $\tau_\lambda$ being the lowest weight vector of the dual of $V(\lambda)$ and
such that $\tau_\lambda(v_\lambda)=1$.

Let $y_0$ be the base point of $G/P_\lambda$. 
Recall that the $T$-stable curves through $wy_0$ are the $U_\alpha$-orbit closures of $wy_0$ 
within $O_\lambda=G/P_\lambda$ with $\alpha$ being a positive root non-orthogonal to $w(\lambda)$. Note that the set of these roots $\alpha$ coincides with the set of roots $\beta_j$ (defined in (\ref{eq:beta_i})).
Moreover, for any such curve, say $C_\alpha$, the equalities  $(w(\lambda),-\alpha^\vee)=\mathcal L_\lambda\cdot C_\alpha=\omega_{\bf{m}}(C_j)$ hold (see e.g.~\cite{BK21}). This enables to conclude the proof.
\end{proof}

\section{Gromov widths of Bott manifolds and of Bott-Samelson varieties}~\label{sec:toric-BS}

The main goal of this section is to derive the Gromov widths of Bott-Samelson varieties which can be degenerated into Bott manifolds from the Gromov widths of the latter manifolds (computed in~\cite{HLS}).
We would like also to draw the reader's attention on Proposition~\ref{prop:bott-case-reformul} which gives a reformulation of the combinatorial expression of the Gromov width obtained in loc. cit. in terms of the symplectic areas of minimal curves. This result is independent from the rest of the paper.

\subsection{}
We start by reviewing a few basic facts on generalized Bott manifolds.

An $m$-stage {\it generalized Bott tower} is a sequence of complex projective space bundles
\begin{equation*}
B_m\stackrel{\pi_m}{\longrightarrow}
B_{m-1}\stackrel{\pi_{m-1}}{\longrightarrow}\cdots\stackrel{\pi_2}{\longrightarrow}
B_1\stackrel{\pi_1}{\longrightarrow}B_0=\{pt\},
\end{equation*}
where $B_j=\mathbb{P}(\mathcal{L}_j^{(1)} \oplus
\cdots \oplus \mathcal{L}_j^{(n_j)} \oplus \mathcal{O}_{B_{j-1}})$
for some line bundles $\mathcal{L}_j^{(1)}, \ldots, \mathcal{L}_j^{(n_j)}$ over $B_{j-1}$.

Since the Picard group of $B_{j-1}$ is isomorphic to $\mathbb{Z}^{j-1}$ for any $j=1, \ldots, m$,
each line bundle $\mathcal{L}_j^{(k)}$
corresponds to a $(j-1)$-tuple of integers
$(a_{j, 1}^{(k)}, \ldots, a_{j, j-1}^{(k)}) \in \mathbb{Z}^{j-1}$.
The variety $B_m$ is thus determined by a collection of integers
\begin{equation*}
a_{j, l}^{(k)} \quad\mbox{with}~ 2 \leq j \leq m, 1 \leq k \leq n_j ~\mbox{and}~ 1 \leq l \leq j-1.
\end{equation*}
$B_m$ is called the $m$-stage generalized Bott manifold associated to the collection $(a_{j, l}^{(k)})$.

Recall that any projective bundle of sum of line bundles over a smooth toric variety is again a smooth toric variety (see \cite[Proposition 7.3.3]{cox}). Therefore $B_m$ is a smooth projective toric variety. 

We now describe the fan of the $m$-stage generalized Bott manifold $B_m$ associated to the collection $(a_{j, l}^{(k)})$.
Let $n=n_1+\cdots+n_m$ and let $\{e_1^1, \ldots, e_1^{n_1}, \ldots, e_m^1, \ldots, e_m^{n_m}\}$
be the standard basis for $\mathbb{Z}^n$.
For $l=1, \ldots, m$, we define
\begin{equation*}
 u_l^k=e_l^k \hspace{0.5cm} \mbox{for} \hspace{0.5cm} k=1, \ldots, n_l \quad \mbox{and}
 \end{equation*}
  \begin{equation*}
u_l^0=-\sum_{k=1}^{n_l}e_l^k+\sum_{j=l+1}^m\sum_{k=1}^{n_j}a_{j, l}^{(k)}e_j^k.
\end{equation*}
Note that $u_1^0, \ldots, u_1^{n_1}, \ldots, u_m^0, \ldots, u_m^{n_m} (\in \mathbb Z^n)$ are ray generators.

Given ${\bf k}=(k_1,\ldots,k_m)$ with $0\leq k_l\leq n_l$ for $1\leq l\leq m$, let
$$
\mathcal C_{{\bf k}}=\mathrm{Cone}\big( u_l^k: 1\leq l\leq m, 0\leq k\leq n_l,\, k\neq k_l\big)
$$
be the $n$-dimensional cone in $\mathbb R^n$ generated by all $u_l^k$ but the $u_l^{k_l}$'s.

\begin{proposition}\label{prop:fan}
Let $\Sigma$ be the fan in $\mathbb R^n$ whose maximal cones consist of the cones $\mathcal C_{\bf k}$, ${\bf k}\in [0,n_1]\times\ldots\times[0,n_m]$. 
Then $\Sigma$ is a smooth complete fan in $\mathbb{R}^n$.

Furthermore, the generalized Bott manifold defined by the collection $(a_{j, l}^{(k)})$ is the toric variety associated to the fan $\Sigma$. 
\end{proposition}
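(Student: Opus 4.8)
The plan is to proceed in three stages: first identify the rays and the combinatorics of $\Sigma$ with the standard toric description of an iterated projective-bundle, then verify smoothness and completeness cone-by-cone, and finally match the fan with the generalized Bott manifold by induction on the number of stages $m$. The key structural observation is that for each fixed stage $l$, the $n_l+1$ vectors $u_l^0, u_l^1, \ldots, u_l^{n_l}$ satisfy the single linear relation $u_l^0 + u_l^1 + \cdots + u_l^{n_l} = \sum_{j>l}\sum_{k} a_{j,l}^{(k)} e_j^k$, which lives entirely in the span of the later-stage basis vectors; modulo that span the stage-$l$ vectors are exactly the rays of the fan of $\mathbb P^{n_l}$. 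This is the combinatorial shadow of the projective-bundle structure and it is what makes the induction go through.

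First I would set up the inductive framework. For $m=1$ there are no coefficients $a_{j,l}^{(k)}$, the vectors are $e_1^1,\ldots,e_1^{n_1}$ together with $u_1^0=-\sum_k e_1^k$, and $\Sigma$ is manifestly the fan of $\mathbb P^{n_1}$; the claim is classical. For the inductive step, let $B_{m-1}$ be the generalized Bott manifold for the truncated collection (forgetting all data with $j=m$), with fan $\Sigma'$ in $\mathbb R^{n_1+\cdots+n_{m-1}}$; by \cite[Proposition 7.3.3]{cox} (as quoted in the excerpt), $B_m=\mathbb P(\mathcal L_m^{(1)}\oplus\cdots\oplus\mathcal L_m^{(n_m)}\oplus\mathcal O_{B_{m-1}})$ is toric, and its fan is the standard projectivized-split-bundle fan: over each maximal cone $\sigma'$ of $\Sigma'$ one erects $n_m+1$ maximal cones, indexed by which of the $n_m+1$ summands is omitted, using the lifts $e_m^1,\ldots,e_m^{n_m}$ of the first $n_m$ bundle rays and the lift $u_m^0=-\sum_k e_m^k + (\text{twist})$ of the $\mathcal O$-summand ray, where the twist is precisely $\sum_{l<m}(\text{pullback data})\,=\,\sum_{l<m}\sum_k a_{m,l}^{(k)}(\text{ray of }\Sigma'\text{ dual to }l)$. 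The recursive definition of $u_l^0$ for $l<m$ records exactly how these twists accumulate through the tower. I would therefore show that the cone $\mathcal C_{\bf k}$ of the proposition, for ${\bf k}=(k_1,\ldots,k_m)$, is the cone obtained by this construction from the cone $\mathcal C_{(k_1,\ldots,k_{m-1})}'$ of $\Sigma'$ and the omission index $k_m$; this is a direct comparison of generating sets once one checks that the rays $u_l^k$ with $l<m$ appearing here are the images of the rays of $\Sigma'$ under the obvious lift $\mathbb R^{n_1+\cdots+n_{m-1}}\hookrightarrow\mathbb R^n$ corrected by the stage-$m$ twist — which is what the formula for $u_l^0$ encodes.

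Second, smoothness: I would check that each $\mathcal C_{\bf k}$ is generated by a $\mathbb Z$-basis of $\mathbb Z^n$. The $n$ generators of $\mathcal C_{\bf k}$ are, for each stage $l$, the $n_l$ vectors $u_l^k$ with $k\neq k_l$; if $k_l\neq 0$ these are $n_l-1$ standard basis vectors $e_l^k$ plus $u_l^0$, and if $k_l=0$ they are all $n_l$ standard basis vectors $e_l^k$. Writing these in the standard basis, the matrix is block lower-triangular with respect to the stage filtration (the twist in $u_l^0$ only involves later stages), and each diagonal block is the unimodular matrix expressing the rays of $\mathbb P^{n_l}$ minus one, hence has determinant $\pm1$; so the whole matrix is unimodular. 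Equivalently one invokes the inductive hypothesis (the diagonal blocks for $l<m$ assemble to a basis by smoothness of $\Sigma'$) and the projective-bundle smoothness statement for the top block. Completeness (i.e. the $\mathcal C_{\bf k}$ cover $\mathbb R^n$ with the correct face matchings to form a fan) follows the same inductive pattern: the fan of a projective bundle over a complete base is complete, and the face relations among the $\mathcal C_{\bf k}$ are inherited stagewise from those of $\mathbb P^{n_l}$ and of $\Sigma'$. I expect the genuine bookkeeping obstacle to be exactly this last point — verifying rigorously that the collection $\{\mathcal C_{\bf k}\}$ is closed under taking faces and that two such cones meet along a common face, rather than anything subtle; the twist terms make the intersection computation a little delicate, and the cleanest route is to avoid it entirely by citing the known toric description of $\mathbb P(\bigoplus\mathcal L^{(k)}\oplus\mathcal O)$ and checking only that our explicit $u_l^k$ realize that description, which reduces everything to the unimodular change-of-basis identity above.
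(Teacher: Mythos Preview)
The paper does not actually prove this proposition: it is stated without proof as a recollection of standard facts about generalized Bott manifolds (the surrounding text cites \cite[Proposition~7.3.3]{cox} for the fact that projective bundles of sums of line bundles over toric varieties are toric, and then simply records the resulting fan). So there is no argument in the paper to compare your proposal against.

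Your inductive approach via the projective-bundle description is the standard one and is essentially how this fact is established in the literature; the block-lower-triangular unimodularity check for smoothness is exactly right. One small correction: in the convention used here, the top-stage ray $u_m^0=-\sum_k e_m^k$ carries \emph{no} twist (the sum over $j>m$ is empty); the twisting data $a_{m,l}^{(k)}$ instead appear as the extra $e_m^k$-components added to the \emph{base} rays $u_l^0$ for $l<m$ when lifting from $\Sigma'$ to $\Sigma$. You seem to realize this a sentence later, but the earlier description of $u_m^0$ as ``$-\sum_k e_m^k + (\text{twist})$'' is off. With that adjustment your outline goes through.
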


Since $B_m$ is a toric manifold, $H_2(B_m,\mathbb R)$ is isomorphic to the $\mathbb R$-span of the torus stable prime divisors $D_l^k$ of $B_m$, the latter corresponding to the ray generators $u_l^k$ of the fan $\Sigma$.
Given $[\omega]\in H_2(B_m,\mathbb R)$, by abuse of notation, we thus write
\begin{equation}\label{eq:divisor}
[\omega]=\sum_{l,k} \lambda_l^k [D_l^k]\quad\mbox{ with $\lambda_l^k\in\mathbb R$, $1\leq l\leq m$ and  $0\leq k\leq n_l$}.
\end{equation}

For $1\leq l \leq m$, we set
$$
u(l)= \sum_{k=0}^{n_{l}} u_{l}^k \quad \text{and} \quad \lambda(l)= \sum_{k=0}^{n_{l}} \lambda_{l}^k.
$$ 

\begin{theorem}[{\cite[Theorem 1.1]{HLS}}]\label{thm:HLD}
Let $(B_m,\omega)$ be the $m$-stage generalized Bott manifold associated to the collection $(a_{j, l}^{(k)})$ and equipped with a symplectic $2$-form $\omega$ given as in~(\ref{eq:divisor}). Then
$$
w_G(B_m, \omega)=\min\{ \lambda(l): u(l)=0\}.
$$
\end{theorem}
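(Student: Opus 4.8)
The plan is to prove the identity by bracketing $w_G(B_m,\omega)$ between a lower and an upper bound that meet, exactly in the spirit of the two-sided argument carried out for Bott--Samelson varieties in Sections~\ref{sec:up} and~\ref{sec:lower}. Throughout one works with the toric K\"ahler form representing the given class~(\ref{eq:divisor}); by Proposition~\ref{prop:fan} its moment polytope $P_\omega\subset\mathbb R^n$ is the polytope whose facet normals are the ray generators $u_l^k$ of the fan $\Sigma$, with facet levels read off from the coefficients $\lambda_l^k$.

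For the upper bound I would fix an index $l$ with $u(l)=0$ and let $A_l\in H_2(B_m,\mathbb Z)$ be the class of a line in a $\mathbb P^{n_l}$-fibre of the $l$-th projective bundle. The decisive point is that $u(l)=0$ means $a_{j,l}^{(k)}=0$ for all $j>l$, i.e.\ the higher stages are pulled back from $B_{l-1}$, so $B_m\cong B_l\times_{B_{l-1}}B_m'$ and this $\mathbb P^{n_l}$-fibre has \emph{trivial} normal bundle in $B_m$. Consequently ${T_{B_m}}|_{A_l}\cong\mathcal O(2)\oplus\mathcal O(1)^{\oplus(n_l-1)}\oplus\mathcal O^{\oplus(n-n_l)}$, so $H^1(A_l,{T_{B_m}}|_{A_l})=0$ (the relevant moduli space of stable maps is then smooth of the expected dimension, as in Proposition~\ref{prop:smooth}), $c_1(A_l)=n_l+1$, and $\omega(A_l)=\sum_{k=0}^{n_l}\lambda_l^k=\lambda(l)$, since $A_l\cdot D_l^k=1$ for every $k$ while $A_l\cdot D_{l'}^k=0$ for $l'\neq l$. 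Writing $H_l$ for the relative hyperplane class of the $l$-th stage, the pair $(\mathrm{PD}[pt],H_l^{\,n_l})$ satisfies the codimension condition, and a fibrewise count of the unique line through two points of $\mathbb P^{n_l}$ gives
$$
GW^{B_m}_{A_l,2}(\mathrm{PD}[pt],H_l^{\,n_l})=1\neq0 .
$$
By Theorem~\ref{thm:Gromov} this yields $w_G(B_m,\omega)\le\omega(A_l)=\lambda(l)$; minimising over all $l$ with $u(l)=0$ gives the inequality ``$\le$''.

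For the lower bound I would exhibit a unimodular $n$-simplex of size $\kappa:=\min\{\lambda(l):u(l)=0\}$ inside $P_\omega$ and invoke Theorem~\ref{thm:embed-toric}. Here one uses that $u(l)=0$ is precisely $\sum_{k=0}^{n_l}u_l^k=0$ with $u_l^0,\dots,u_l^{n_l}$ generating the coordinate sublattice $\langle e_l^1,\dots,e_l^{n_l}\rangle$ and forming the fan of $\mathbb P^{n_l}$: in that block of coordinates $P_\omega$ is literally a product with the standard simplex $\lambda(l)\Delta^{n_l}$. Choosing the smallest such $l$ together with a suitable torus-fixed vertex of $P_\omega$ (a maximal cone $\mathcal C_{{\bf k}}$ of $\Sigma$), one grows the corner simplex at that vertex; it reaches size $\lambda(l)$ before meeting any further facet, because the remaining stages contribute either genuine product directions of size $\ge\kappa$ or, for the twisted stages, facets sheared away from the vertex. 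Iterating on the number of stages (peeling off one split stage at a time) assembles the full $n$-dimensional simplex of size $\kappa$. Combining the two bounds gives $w_G(B_m,\omega)=\kappa=\min\{\lambda(l):u(l)=0\}$.

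I expect the lower bound to be the delicate part. When some stages are genuinely twisted ($u(l)\neq0$), $P_\omega$ is no longer a product of simplices: the coefficients $a_{j,l}^{(k)}$ shear the corresponding facets, and one must check that this shearing never truncates the corner simplex below size $\kappa$ and, conversely, that the minimising \emph{split} stage is the binding one. Making this precise via an induction on the number of stages --- tracking how the surviving facet inequalities interact once a split stage is removed --- is the main obstacle; by contrast the upper bound reduces transparently to the classical enumerative count of lines in $\mathbb P^{n_l}$ once the product decomposition $B_m\cong B_l\times_{B_{l-1}}B_m'$ is available.
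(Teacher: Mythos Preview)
This theorem is not proved in the present paper at all: it is quoted verbatim as \cite[Theorem 1.1]{HLS} and used as a black box (to feed into Proposition~\ref{prop:bott-case-reformul} and Corollary~\ref{cor:caseline}). There is therefore no ``paper's own proof'' to compare your proposal against.

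On its own merits, your proposal is a plausible outline but not a proof. The upper bound is essentially correct and is indeed the mechanism used in \cite{HLS}: when $u(l)=0$ the $l$-th stage splits off and a line in the $\mathbb P^{n_l}$-fibre has trivial normal bundle, so the relevant two-point invariant is the elementary count of lines through two points in projective space and Theorem~\ref{thm:Gromov} applies. The lower bound, by contrast, you yourself flag as incomplete. The sentence ``it reaches size $\lambda(l)$ before meeting any further facet, because \dots\ for the twisted stages, facets sheared away from the vertex'' is exactly the assertion that has to be proved, not an argument for it: the integers $a_{j,l}^{(k)}$ are of arbitrary sign, so a twisted facet can be sheared \emph{towards} the chosen vertex just as easily as away from it, and nothing you have written rules out that such a facet truncates the corner simplex below size $\kappa$. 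The phrase ``iterating on the number of stages (peeling off one split stage at a time)'' does not help either, since after removing a split stage the remaining polytope is still a generalized Bott polytope with possibly no split stages at all, and the induction hypothesis gives you nothing there. In short, the lower bound requires an explicit, carefully organised construction of the simplex (or an alternative capacity argument), and your proposal does not supply one.
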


\subsection{} 
Let us recall the combinatorial description of the minimal rational curves on complete toric varieties obtained in~\cite{CFH}. 

Let $X$ be any smooth complete toric variety and $\Sigma$ be its fan.
By $\Sigma(1)$ we denote the set of all primitive generators of one-dimensional cones in the fan $\Sigma$.   

\begin{definition}[{\cite{Bat}}]\label{def:primcoll}
A non-empty subset $\mathfrak P=\{x_1, \ldots, x_k\}$ of $\Sigma(1)$ is called a 
\emph{primitive collection} if, 
for any $1\leq i\leq k$, the set $\mathfrak P\setminus \{x_i\}$ generates a $(k-1)-$dimensional cone in $\Sigma$, while $\mathfrak P$ does not generate a $k$-dimensional cone in $\Sigma$.  
\end{definition}

For a primitive collection $\mathfrak P=\{x_1, \ldots, x_k\}$  of $\Sigma(1)$, let $\sigma(\mathfrak P)$ be the unique cone in $\Sigma$ that contains $x_1+\cdots +x_k$ in its interior. 
Let $y_1, \ldots, y_m$ be generators of $\sigma(\mathfrak P)$. 
There thus exists a unique equation  such that 
$$
x_1+\cdots+x_k=a_1y_1+\ldots+a_my_m \quad\mbox{ with }\quad a_i\in \mathbb Z_{>0}.
$$
The equation $x_1+\cdots+x_k-a_1y_1-\ldots-a_my_m=0$ is called the \emph{primitive relation} of $\mathfrak P$. 
The \emph{degree} of $\mathfrak P$ is 
$$
deg(\mathfrak P)=k-\sum_{i=1}^{m}a_i.
$$

\begin{theorem}[{\cite[Proposition 3.2 and Corollary 3.3]{CFH}}]\label{Fu} 
Let $X$ be a smooth complete toric variety.
\begin{enumerate}
      \item There is a bijection between minimal rational components of degree $k$ on $X$ and primitive collections $\mathfrak P=\{x_1, \ldots, x_k\}$ of $\Sigma(1)$ such that $x_1+\cdots+x_k=0$.
      \item There exists a minimal rational component in $\mathrm{RatCurves}(X)$.
\end{enumerate}
\end{theorem}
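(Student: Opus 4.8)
The plan is to work with the Cox homogeneous coordinate ring of $X$ and translate both sides of the correspondence into the combinatorics of $\Sigma$. Writing $X=(\mathbb C^{\Sigma(1)}\setminus Z(\Sigma))/G$ with $Z(\Sigma)$ the irrelevant locus and $G$ the acting torus, a morphism $f\colon\mathbb P^1\to X$ whose image lies in no toric boundary divisor corresponds, up to $G\times\mathrm{Aut}(\mathbb P^1)$, to a tuple $(s_x)_{x\in\Sigma(1)}$ of binary forms with $\deg s_x=d_x:=D_x\cdot f_*[\mathbb P^1]\ge 0$, subject to the condition that for every primitive collection $\mathfrak Q$ the forms $\{s_x:x\in\mathfrak Q\}$ have no common zero on $\mathbb P^1$; this last condition is precisely that the curve avoids $Z(\Sigma)$. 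Conversely any such tuple defines a rational curve, its numerical class is recorded by $(d_x)_x$, which must satisfy $\sum_{x}d_x\,x=0$, and since $-K_X=\sum_x D_x$ the anticanonical degree is $\sum_x d_x$. So "degree $k$" will mean $\sum_x d_x=k$.

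I would first show that every primitive collection $\mathfrak P=\{x_1,\ldots,x_k\}$ with $x_1+\ldots+x_k=0$ yields a minimal component of degree $k$ (note $k\ge 2$ as the $x_i$ are nonzero). Take $d_{x_i}=1$, $d_x=0$ for $x\notin\mathfrak P$, let the $s_{x_i}$ be generic linear forms and the remaining $s_x$ generic nonzero constants: generic linear forms have no common zero, and every other primitive collection contains some $x\notin\mathfrak P$ with $s_x$ a nonzero constant, so the irrelevance condition holds. Letting the data vary produces an irreducible component $\mathcal K_{\mathfrak P}\subset\mathrm{RatCurves}(X)$ whose general member has the class $\beta_{\mathfrak P}$ with $D_x\cdot\beta_{\mathfrak P}=1$ for $x\in\mathfrak P$ and $0$ otherwise, so $-K_X\cdot\beta_{\mathfrak P}=k$. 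Since each $\mathfrak P\setminus\{x_i\}$ spans a cone of $\Sigma$, a dimension count shows these curves sweep out $X$, so $\mathcal K_{\mathfrak P}$ is dominant. Finally $\mathcal K_{\mathfrak P}$ is minimal: a degeneration of a general member through a general point to a connected reducible genus-$0$ curve would split $\beta_{\mathfrak P}$ into a sum of two nonzero effective classes, but the constraints $D_x\cdot\beta_{\mathfrak P}\in\{0,1\}$ together with the minimality of the subsets $\mathfrak P\setminus\{x_j\}$ (which forbids any proper sub-relation of $\sum x_i=0$) rule out such a splitting into classes of rational curves through adjacent points; hence $\mathcal K_{\mathfrak P,x}$ is projective for general $x$. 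Distinct $\mathfrak P$ give numerically distinct classes, hence distinct components.

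Conversely, let $\mathcal K$ be a minimal rational component of degree $k$ with general member $C$. By Kollár's theory the general member of a dominant family is free and meets no boundary divisor negatively, so in Cox coordinates $C$ is given by $(s_x)$ with $d_x=D_x\cdot C\ge 0$ and $\sum_x d_x\,x=0$; set $\mathfrak P=\{x:d_x>0\}$. A bend-and-break argument shows that if some $d_x\ge 2$, or if $\mathfrak P\setminus\{x\}$ fails to span a cone of $\Sigma$ for some $x\in\mathfrak P$, then one can split off a subcurve and produce a dominant family of strictly smaller anticanonical degree through a general point, contradicting minimality of $\mathcal K$; hence $d_x=1$ for all $x\in\mathfrak P$ and every proper subset of $\mathfrak P$ spans a cone. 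On the other hand $\mathfrak P$ itself spans no cone, since the rays of a strongly convex cone admit no nontrivial relation with nonnegative coefficients whereas $\sum_{x\in\mathfrak P}x=0$. Thus $\mathfrak P$ is a primitive collection with zero primitive relation, $k=|\mathfrak P|$, and $\mathfrak P\mapsto\mathcal K_{\mathfrak P}$, $\mathcal K\mapsto\mathfrak P$ are mutually inverse, proving (1). For (2), $X$ is rational, so $\mathrm{RatCurves}(X)\ne\emptyset$ and a dominant component of minimal anticanonical degree is a family of minimal rational curves by \cite[Chapter II]{Kol}; equivalently, by (1) one only needs a primitive collection with zero primitive relation, which exists by completeness of $\Sigma$. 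The main obstacle is exactly the equivalence running through part (1): matching the geometric minimality condition ($\mathcal K_x$ projective for general $x$) with the combinatorial ones ($d_x=1$ on $\mathfrak P$ and $\mathfrak P\setminus\{x\}$ spanning a cone), which requires a careful bend-and-break analysis showing that any departure from "primitive collection with zero relation" lets the general curve break or be dominated by a lower-degree family, controlled via its intersection numbers with the torus-invariant divisors.
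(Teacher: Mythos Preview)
Your route through the Cox construction is genuinely different from what the paper (following \cite{CFH}) does. There the key input is the structural result \cite[Corollary~2.6]{CFH}: for a minimal family $\mathcal K$ of degree $k$ one finds a torus-invariant open subset $U\subset X$ with $U\simeq(\mathbb C^*)^{n+1-k}\times\mathbb P^{k-1}$ whose lines in the $\mathbb P^{k-1}$ factor are the general members of $\mathcal K$. The fan of $U$ is then the fan of $\mathbb P^{k-1}$ sitting inside $\Sigma$, which hands you the primitive collection with zero sum directly; and conversely, a primitive collection $\mathfrak P$ with $\sum_{x\in\mathfrak P}x=0$ gives a subfan $\Sigma'$ with $U_{\Sigma'}\simeq(\mathbb C^*)^{n+1-k}\times\mathbb P^{k-1}$, and lines in $\mathbb P^{k-1}$ visibly form a minimal family since $\mathcal K_x\simeq\mathbb P^{k-2}$ is projective. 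Your approach trades this single geometric lemma for an explicit computation in homogeneous coordinates plus bend-and-break; the payoff is that everything is expressed through intersection numbers $D_x\cdot C$, which is closer in spirit to how the result is actually used later in the section.

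That said, two steps in your sketch are not yet solid. In the forward direction, your argument that $\mathcal K_{\mathfrak P,x}$ is projective assumes a putative splitting $\beta_{\mathfrak P}=\beta_1+\beta_2$ has both pieces meeting every $D_y$ nonnegatively; but a component of the limit lying inside some boundary divisor $D_y$ with $y\notin\mathfrak P$ can have $D_y\cdot\beta_2<0$, so the ``no proper sub-relation'' argument does not immediately apply. The paper's $\mathbb P^{k-1}$ picture sidesteps this entirely because lines do not degenerate. In the converse direction you yourself flag the obstacle: the assertion that $d_x\ge 2$, or that some $\mathfrak P\setminus\{x\}$ fails to span a cone, forces the general curve to break is the substantive content of the theorem, and invoking ``bend-and-break'' without specifying the two-point or automorphism setup that produces the required positive-dimensional family leaves the argument incomplete. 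In \cite{CFH} this is exactly what the structure theorem for $U$ absorbs; if you want to stay with Cox coordinates you will need either a careful dimension count showing $\dim\mathcal K_{x,y}\ge 1$ in those cases, or to reproduce the VMRT analysis that yields the $\mathbb P^{k-1}$ factor.
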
 

For later use, we recall briefly the idea of the proof of this theorem. 
\begin{proof}[The idea of the proof of Theorem \ref{Fu}]
For a given family $\mathcal K$ of minimal rational curves on $X$ of degree $k$, there exists a  torus invariant open subset $U\subset X$ such that $U\simeq (\mathbb C^*)^{n+1-k}\times \mathbb P^{k-1}$ (see \cite[Corollary 2.6]{CFH}) such that
the lines in the factor $\mathbb P^{k-1}$ give general members of $\mathcal K$. 
The fan defining $U$ is the fan of $\mathbb P^{k-1}$ viewed as a fan in $\mathbb R^n$. This gives a collection $\{x_1, \ldots, x_k\}$ of $\Sigma(1)$ which is primitive and such that $x_1+\cdots+x_k=0$. 

For the converse, assume that  $\mathfrak P=\{x_1, \ldots, x_k\}$ is a primitive collection of $X$ such that $x_1+\cdots+x_k=0$. Now consider the subfan $\Sigma'$ of $\Sigma$ given by the collection of cones in $\Sigma$ generated by the subsets of $\mathfrak P$. Then the toric variety $U_{\Sigma'}$ associated with $\Sigma'$ is an open subset of $X$ and   $U_{\Sigma'}\simeq (\mathbb C^*)^{n+1-k}\times \mathbb P^{k-1}$. Let $C_k$ be a line in the factor $\mathbb P^{k-1}$. Then the deformations of $C_k$ give a minimal family of rational curves in $X$ of degree $k$. 
\end{proof}

\subsection{}
For $1\leq l\leq m$, define $\mathfrak P_l:=\{u_l^0, u_l^1, \ldots, u_l^{n_l}\}$.

\begin{lemma}\label{Pc}
The set of all primitive collections of the generalized Bott manifold $B_m$ is 
$\{\mathfrak P_l: 1\leq l\leq m\}$.
\end{lemma}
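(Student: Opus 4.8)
The claim is that the primitive collections of the fan $\Sigma$ of $B_m$ are exactly the sets $\mathfrak P_l = \{u_l^0, u_l^1, \ldots, u_l^{n_l}\}$ for $1 \leq l \leq m$. The strategy is to verify both inclusions directly from the combinatorial description of $\Sigma$ given in Proposition~\ref{prop:fan}, whose maximal cones are the $\mathcal C_{\bf k}$. First I would record the basic structural fact about which subsets of $\Sigma(1)$ span cones: a subset $S \subseteq \{u_l^k\}$ generates a cone of $\Sigma$ if and only if $S$ is contained in some $\mathcal C_{\bf k}$, i.e.\ if and only if for each $l$ there is an index $k_l$ with $u_l^{k_l} \notin S$. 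Equivalently, $S$ spans a cone precisely when $S$ omits at least one ray generator from each "block" $\mathfrak P_l$. This single observation drives everything.

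\textbf{Step 1: each $\mathfrak P_l$ is primitive.} Fix $l$. The set $\mathfrak P_l$ contains all generators of block $l$, so by the criterion above $\mathfrak P_l$ does not span a cone of $\Sigma$. On the other hand, for any $u_l^{k} \in \mathfrak P_l$, the set $\mathfrak P_l \setminus \{u_l^k\}$ omits $u_l^k$ from block $l$ and omits all of blocks $l' \neq l$, hence is contained in $\mathcal C_{\bf k}$ for suitable ${\bf k}$ (take the missing index in block $l$ to be $k$, and in block $l'$ any index); so it spans a cone, which is $(n_l)$-dimensional since the $u_l^0,\dots,u_l^{n_l}$ with one omitted are linearly independent (the $u_l^k$ for $k\geq 1$ are standard basis vectors $e_l^k$, and $u_l^0$ together with all but one of them is again a basis of the block-$l$ coordinate plane). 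Thus $\mathfrak P_l$ satisfies Definition~\ref{def:primcoll}.

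\textbf{Step 2: no other primitive collections.} Let $\mathfrak P$ be any primitive collection. Since $\mathfrak P$ does not span a cone, by the criterion there is some block $l$ with $\mathfrak P \supseteq \mathfrak P_l$ (that block's omitted-index condition fails, forcing all $n_l+1$ generators of block $l$ into $\mathfrak P$). If $\mathfrak P$ contained any generator $u_{l'}^{k}$ outside block $l$, then $\mathfrak P \setminus \{u_{l'}^{k}\}$ would still contain all of $\mathfrak P_l$ and hence still fail to span a cone, contradicting the requirement in Definition~\ref{def:primcoll} that every one-element-deleted subset spans a cone. Therefore $\mathfrak P = \mathfrak P_l$, completing the proof.

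\textbf{Main obstacle.} The only genuinely non-formal point is Step~1's verification that $\mathfrak P_l \setminus \{u_l^k\}$ really spans a cone of dimension $n_l$ (not less), which requires the linear-independence remark about $u_l^0, u_l^1,\dots,u_l^{n_l}$ with one removed; this is immediate from the explicit formula $u_l^0 = -\sum_{k=1}^{n_l} e_l^k + (\text{terms in higher blocks})$, since dropping $u_l^{k_0}$ for $k_0 \geq 1$ leaves a triangular system, and dropping $u_l^0$ leaves the standard basis vectors $e_l^1,\dots,e_l^{n_l}$. Everything else is a direct bookkeeping consequence of the shape of the maximal cones $\mathcal C_{\bf k}$, so I expect the proof to be short.
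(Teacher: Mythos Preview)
Your proof is correct and follows exactly the route the paper intends: the paper's own proof simply asserts that the lemma ``follows readily from the description of the fan of $B_m$ (Proposition~\ref{prop:fan}) together with the definition of primitive collections,'' and you have spelled out precisely those details. Your key observation---that a subset $S\subseteq\Sigma(1)$ spans a cone of $\Sigma$ if and only if it omits at least one ray generator from every block $\mathfrak P_l$---is the content behind the paper's one-line justification, and Steps~1 and~2 are the straightforward consequences.
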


\begin{proof}
This follows readily from the description of the fan of $B_m$
(Proposition~\ref{prop:fan}) together with the definition of primitive collections (Definition~\ref{def:primcoll}).
\end{proof}

Here is the advertised reformulation of Theorem~\ref{thm:HLD} in terms of minimal rational curves.

\begin{proposition}\label{prop:bott-case-reformul}
Keep the notation as in Theorem \ref{thm:HLD}.
 $$\min\{\mathcal L\cdot C: C\subset X~\mbox{is a minimal rational curve}~\}= \min\{ \lambda(l): u(l)=0\}.$$
\end{proposition}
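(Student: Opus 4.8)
The plan is to combine the bijection between minimal rational components and primitive collections with zero sum (Theorem~\ref{Fu}(1)), the explicit list of primitive collections of $B_m$ (Lemma~\ref{Pc}), and a direct intersection-number computation of $\mathcal L \cdot C$ for a line $C$ in a minimal family. First I would observe that by Theorem~\ref{Fu} every minimal rational component $\mathcal K$ on $B_m$ corresponds to a primitive collection $\mathfrak P$ with $\sum_{x\in\mathfrak P} x = 0$, and that all curves in a fixed minimal family have the same degree against any fixed line bundle $\mathcal L$; hence the minimum of $\mathcal L\cdot C$ over all minimal rational curves equals the minimum of $\mathcal L\cdot C_{\mathfrak P}$ over those primitive collections $\mathfrak P$ with zero primitive relation, where $C_{\mathfrak P}$ denotes a line in the $\mathbb P^{|\mathfrak P|-1}$-factor produced in the proof of Theorem~\ref{Fu}. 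By Lemma~\ref{Pc}, the primitive collections of $B_m$ are exactly the $\mathfrak P_l = \{u_l^0,\ldots,u_l^{n_l}\}$, $1\leq l\leq m$; and $u(l) = \sum_{k=0}^{n_l} u_l^k$ is by definition the left-hand side of the primitive relation of $\mathfrak P_l$, so $\mathfrak P_l$ has zero primitive relation precisely when $u(l)=0$. Thus the set of indices $l$ over which we minimize on the left-hand side matches the set $\{l : u(l)=0\}$ on the right-hand side.

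The remaining point is the identification $\mathcal L\cdot C_{\mathfrak P_l} = \lambda(l)$ whenever $u(l)=0$, where $\mathcal L = \sum_{l',k}\lambda_{l'}^k[D_{l'}^k]$. Here I would use the standard toric intersection formula: for a torus-invariant curve $C$ corresponding (in the present set-up) to a line in the $\mathbb P^{n_l}$-factor of the chart $U_{\Sigma'}\simeq(\mathbb C^*)^{n+1-(n_l+1)}\times\mathbb P^{n_l}$, one has $D_{l'}^k\cdot C = 0$ unless $l'=l$, in which case $D_l^k\cdot C = 1$ for each $k\in\{0,\ldots,n_l\}$. The first assertion holds because $C$ lies in the torus orbit closure attached to a cone generated by the $u_{l'}^k$ with $l'\neq l$ together with all but one of the $u_l^k$, so those divisors $D_{l'}^k$ ($l'\neq l$) contain $C$ and hence meet it in degree $0$ (they are pullbacks of divisors from the lower stage under the bundle projection $\pi_l$, which contracts $C$); the second follows because a line in $\mathbb P^{n_l}$ meets each of the $n_l+1$ coordinate hyperplanes transversally in one point. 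Summing, $\mathcal L\cdot C_{\mathfrak P_l} = \sum_{k=0}^{n_l}\lambda_l^k = \lambda(l)$. Taking the minimum over $l$ with $u(l)=0$ gives the claimed equality, and comparing with Theorem~\ref{thm:HLD} simultaneously re-derives $w_G(B_m,\omega)$ as the minimal symplectic area of a minimal rational curve.

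\textbf{Main obstacle.} I expect the genuine work to lie in the intersection computation $D_{l'}^k\cdot C_{\mathfrak P_l}$, namely in pinning down precisely which torus-invariant curve the line $C_{\mathfrak P_l}$ of Theorem~\ref{Fu} is and justifying the vanishing $D_{l'}^k\cdot C_{\mathfrak P_l}=0$ for $l'\neq l$ cleanly. The conceptual shortcut is the generalized-Bott-tower structure: the projection $\pi_l\colon B_m\to B_{l-1}$ (more precisely the composite contracting all stages $\geq l$ except keeping the $\mathbb P^{n_l}$-fibre) sends $C_{\mathfrak P_l}$ to a point, so $C_{\mathfrak P_l}$ is a line in a fibre $\mathbb P^{n_l}$ of that projective bundle; any $D_{l'}^k$ with $l'\neq l$ is a pullback along $\pi_l$ of a divisor class from the base (for $l'<l$) or restricts to a fibrewise hyperplane bundle from a higher stage that is trivial on this particular fibre line (for $l'>l$, using $u(l)=0$), whence the intersection number vanishes; meanwhile $\mathcal O_{\mathbb P^{n_l}}(1)$ has degree $1$ on a line, giving the $D_l^k\cdot C = 1$. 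Making this fibre/pullback bookkeeping match the ray-generator combinatorics of Proposition~\ref{prop:fan} exactly is the step that needs care; everything else is formal.
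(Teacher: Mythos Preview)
Your overall architecture matches the paper's: both reduce to the primitive collections $\mathfrak P_l$ via Theorem~\ref{Fu} and Lemma~\ref{Pc}, identify those with $u(l)=0$ as precisely the ones giving minimal families, and then compute $\mathcal L\cdot C_{\mathfrak P_l}$. The difference is in that last step. The paper does not unpack the bundle geometry at all: it observes that the primitive relation $u(l)=0$ determines a numerical $1$-cycle class $R(l)=(b_\rho)_\rho\in N_1(X)$ with $b_\rho=1$ for $\rho\in\mathfrak P_l$ and $b_\rho=0$ otherwise, and then invokes \cite[Proposition~6.4.1]{cox}, which says $D_\rho\cdot R(l)=b_\rho$; summing against $\mathcal L=\sum\lambda_{l'}^k D_{l'}^k$ gives $\lambda(l)$ in one line. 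This is exactly the wall-relation formalism that makes your ``main obstacle'' evaporate.

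Your hands-on computation reaches the same numbers but has two loose ends you yourself flag. First, ``$D_{l'}^k$ contains $C$ and hence meets it in degree~$0$'' is not a valid inference; containment says nothing about the intersection number, and the parenthetical pullback/projection-formula argument you add is what actually does the work for $l'<l$. Second, the $l'>l$ case is not yet justified: the condition $u(l)=0$ is equivalent to $a_{j,l}^{(k)}=0$ for all $j>l$, which is what one needs to see that the higher-stage divisors are numerically trivial on the fibre line, but you do not make this explicit. Citing \cite[Proposition~6.4.1]{cox} as the paper does handles both cases uniformly and is the cleaner route.
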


\begin{proof} Recall the definition of the fan  $\Sigma$ of $X$; see Proposition \ref{prop:fan}. 
By Theorem \ref{Fu}, any family $\mathcal K_l$ of minimal rational curves corresponds to a primitive collection $\mathfrak P_l$ with $u(l)=0$. 
Given such a primitive collection $\mathfrak P_l$, consider the subfan  $\Sigma'$ of $\Sigma$ given by the collection of cones in $\Sigma$ generated by the subsets of $\mathfrak P_l$. 
Then the fan $\Sigma'$ is isomorphic to the fan of $\mathbb P^{n_l}$ (see the proof of Theorem~\ref{Fu}).
To any primitive relation $u(l)=0$ viewed in the fan of $\mathbb P^{n_l}$, we can associate two maximal cones in $\Sigma'$, say $\sigma$ and $\sigma'$, such that the intersection $\tau= \sigma\cap\sigma'$ is a cone of codimension $1$ in $\Sigma'$. 
Let $C_l$ be the torus invariant curve in $\mathbb P^{n_l}$ associated to $\tau$. Note that $C_l$ is isomorphic to the projective line $\mathbb P^1$ (see \cite[Section 6.3, p.289]{cox}).
Then the family $\mathcal K_l$ associated to $\mathfrak P_l$ is obtained by deformation of the curve  $C_l$ (see the proof of Theorem~\ref{Fu}). 

Besides, the relation $u(l)=0$ corresponds to the element $R(l)=(b_{\rho})_\rho\in N_1(X)\subset \mathbb R^{|\Sigma(1)|}$, the group of numerical classes of 1-cycles on $X$,  with $$b_{\rho}=\begin{cases}
1 & \mbox{if $\rho \in \mathfrak P_l$}\\
0 & \mbox{otherwise.}
\end{cases}
$$  
By~\cite[Proposition 6.4.1]{cox}, we see that 
the intersection number $\mathcal L\cdot R(l)$ equals $\lambda(l)$.
Finally, since $\mathcal L\cdot C =\mathcal L\cdot C'$ for any $C,C'\in\mathcal K_l$,  the proof follows.
\end{proof}

\begin{corollary}\label{cor:reform}
$w_G(X, \mathcal L)=\min\{\mathcal L\cdot C: C\subset X~\mbox{is a minimal rational curve}~\}.$
\end{corollary}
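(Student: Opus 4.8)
The plan is to combine Proposition~\ref{prop:bott-case-reformul} with Theorem~\ref{thm:HLD} directly: the former identifies the combinatorial quantity $\min\{\lambda(l): u(l)=0\}$ with the minimum intersection number $\min\{\mathcal L\cdot C : C \text{ a minimal rational curve}\}$, while the latter asserts that this same combinatorial quantity equals the Gromov width $w_G(X,\mathcal L)$. Chaining the two equalities yields the corollary immediately, so the proof should be a single sentence.

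First I would note that $X = B_m$ is the $m$-stage generalized Bott manifold, equipped with the symplectic form $\omega$ associated to the ample line bundle $\mathcal L$ as in~(\ref{eq:divisor}); here $\mathcal L\cdot C$ is understood as $\omega([C])$ for the relevant homology class. Then I would invoke Theorem~\ref{thm:HLD} to write $w_G(X,\mathcal L) = \min\{\lambda(l) : u(l) = 0\}$, and Proposition~\ref{prop:bott-case-reformul} to write $\min\{\lambda(l):u(l)=0\} = \min\{\mathcal L\cdot C : C\subset X \text{ a minimal rational curve}\}$. Composing gives the claim.

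The only point requiring any care is the implicit compatibility of the polarization/symplectic-form conventions between the statement of Theorem~\ref{thm:HLD} (phrased for a symplectic $2$-form $\omega$ given by divisor coefficients $\lambda_l^k$) and that of Proposition~\ref{prop:bott-case-reformul} (phrased for the line bundle $\mathcal L$), but since Proposition~\ref{prop:bott-case-reformul} is explicitly stated with "the notation as in Theorem~\ref{thm:HLD}", the two are already identified and no real obstacle remains. Thus:

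\begin{proof}
Immediate from Theorem~\ref{thm:HLD} and Proposition~\ref{prop:bott-case-reformul}.
\end{proof}
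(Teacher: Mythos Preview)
Your proof is correct and matches the paper's own argument essentially verbatim: the paper's proof is the single line ``This follows readily from Theorem~\ref{thm:HLD} and Proposition~\ref{prop:bott-case-reformul}.''
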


\begin{proof}
This follows readily from Theorem~\ref{thm:HLD} and Proposition~\ref{prop:bott-case-reformul}.
\end{proof}


\subsection{}
For some appropriate choice of $(\bf i,m)$, Bott-Samelson varieties $Z_{\bf i}$ equipped with an ample line bundle $\mathcal L_{\bf i,m}$ can be degenerated into Bott manifolds using the theory of Newton-Okounkov bodies. 
These toric degenerations were derived in~\cite{HY} from some previous constructions of Grossberg and Karshon (see~\cite{GK}  and~\cite{Pas} also). Next, we recall the main properties of these toric degenerations.

To define properly the relevant Newton-Okounkov bodies, denoted below by $P_{\bf i,m}$, we need the following functions.
Given $(\bf{i,m})$, we set
\begin{equation}
\begin{array}{lll}
A_r &= & m_r,\\
A_{r-1}(x_r)& = &\langle m_{r-1}\varpi_{i_{r-1}}+m_r\varpi_{i_r}-x_r\alpha_{i_r},\alpha_{i_{r-1}}^{\vee}\rangle,\\
A_{r-2}(x_{r-1},x_r)& = &\langle m_{r-2}\varpi_{i_{r-2}}+m_{r-1}\varpi_{i_{r-1}}+ m_r\varpi_{i_r}-x_{r-1}\alpha_{i_{r-1}}-x_r\alpha_{i_r},\alpha_{i_{r-2}}^{\vee}\rangle,\\
 \quad \vdots\\
A_1(x_2,\dots,x_r)& = &\langle m_1\varpi_{i_1}+m_2\varpi_{i_2}+\ldots + m_r\varpi_{i_r}-x_2\alpha_{i_2}-\cdots-x_r\alpha_{i_r},\alpha_{i_1}^{\vee}\rangle.
\end{array}
\end{equation}

The polytope $P_{\bf i,m}$ consists of the points $(x_1,\ldots, x_r)\in\mathbb R^r$ satisfying the inequalities
$$
0\leq x_j\leq A(x_{j+1},...,x_n),\quad 1\leq j\leq r.
$$

We now recall the definition of the technical assumption on the pair $(\bf i,m)$ needed for the construction of the toric degenerations.

\begin{definition}
We say that \emph{the pair $(\bf i,m)$ satisfies the condition (P-k)} when the following holds:
If $(x_{k+1}, \ldots, x_r)$ satisfies the inequalities
\begin{equation*}
\begin{array}{ccccl}
0 & \leq & x_r     & \leq & A_r \\
0 & \leq & x_{r-1} & \leq & A_{r-1}(x_r) \\
  &      & \vdots  && \\
0 & \leq & x_{k+1} & \leq & A_{k+1}(x_{k+2}, \ldots, x_r)\\
\end{array}
\end{equation*}
then $A_k(x_{k+1},\ldots, x_r) \geq 0$.

We say that \emph{the pair $(\bf i,m)$ satisfies the condition (P)} if $m_r \geq 0$ and $(\bf i,m)$ satisfies the conditions (P-k) for all $k=1,\ldots,r-1$.
\end{definition}

The following theorem gathers several results on the aforementioned degenerations; 
see~\cite{HY15} for details and original references.

\begin{theorem}\label{thm:GK-Thm}
Let $\bf m\in\mathbb Z_{>0}^r$ and suppose that $({\bf i,m})$ satisfies condition (P). Then
\begin{enumerate}
\item The polytope $P_{\bf i,m}$ is a smooth lattice polytope.

\item 
The symplectic toric manifold $X(P_{\bf i,m})$ associated to the polytope $P_{\bf i,m}$
is the Bott tower with
\begin{equation}\label{ps}
    u_j^0=-e^1_j-\sum_{k>j}^{r}\langle \alpha_{i_k} \alpha_{i_j}^{\vee} \rangle e^1_k \quad \mbox{for all $1\leq j\leq r$}
\end{equation}
and equipped with the ample line bundle
\begin{equation}\label{eq:GK}
     \mathcal M_{\bf i, m}=\sum_{{j=1}}^{{r}}a_j[D_j^0]\quad\mbox{ with }
 a_j=\langle m_{j}\varpi_{i_{j}}+\cdots +m_r\varpi_{i_{r}}, \alpha_{i_j}^{\vee} \rangle .
 \end{equation}
\end{enumerate}
\end{theorem}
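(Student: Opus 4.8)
The plan is to prove Theorem~\ref{thm:GK-Thm} by following the Grossberg--Karshon construction and its toric-degeneration reinterpretation via Newton--Okounkov bodies, as carried out in~\cite{GK,Pas,HY,HY15}. First I would establish part~(1): under condition (P), the inequalities $0\le x_j\le A_j(x_{j+1},\ldots,x_r)$ define a polytope that fibers iteratively over the lower-dimensional polytopes $P_{\bf i,m}(j)$ obtained by dropping the first $j-1$ coordinates, each fiber being an interval $[0,A_j]$ whose right endpoint is an affine-linear function of $(x_{j+1},\ldots,x_r)$ with integer coefficients (visible from the explicit formulas for the $A_j$, since the coefficients $\langle\varpi_{i_k},\alpha_{i_j}^\vee\rangle$ and $\langle\alpha_{i_k},\alpha_{i_j}^\vee\rangle$ are integers). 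Condition (P) is exactly what guarantees $A_j\ge 0$ on the relevant slice, so the fibration is nonempty over the whole base; an induction on $r$ then shows $P_{\bf i,m}$ is a lattice polytope combinatorially equivalent to a cube, and the affine-linear fibration structure yields smoothness (each vertex has exactly $r$ edges with a unimodular set of primitive edge vectors).

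Next I would identify the associated toric variety. The iterated interval-bundle structure of $P_{\bf i,m}$ means the toric variety $X(P_{\bf i,m})$ is an iterated $\mathbb P^1$-bundle, i.e. an $r$-stage Bott tower; this is the toric shadow of the realization of $Z_{\bf i}$ as an iterated $\mathbb P^1$-bundle recalled in Section~\ref{sec:BS-varieties}. To pin down the twisting data I would read off the normal fan of $P_{\bf i,m}$: the facet $\{x_j=0\}$ and the facet $\{x_j=A_j(x_{j+1},\ldots,x_r)\}$ give the two "opposite" rays at stage $j$, and comparing the linear part of $A_j$ with the general formula for the fan of a generalized Bott manifold (Proposition~\ref{prop:fan}, with all $n_j=1$) produces precisely the ray generators $u_j^0=-e^1_j-\sum_{k>j}\langle\alpha_{i_k},\alpha_{i_j}^\vee\rangle e^1_k$ of~(\ref{ps}). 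The polarization is obtained by writing $P_{\bf i,m}$ as the moment polytope: the support numbers of the facets $\{x_j=A_j\}$, evaluated at the origin (or any fixed vertex), give the divisor coefficients, and substituting $x_{j+1}=\cdots=x_r=0$ into the definition of $A_j$ collapses it to $a_j=\langle m_j\varpi_{i_j}+\cdots+m_r\varpi_{i_r},\alpha_{i_j}^\vee\rangle$, which is~(\ref{eq:GK}); positivity of the $a_j$ under (P) (together with $m_j>0$) gives ampleness.

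The main obstacle I expect is bookkeeping rather than conceptual: correctly matching the coordinate conventions of the Grossberg--Karshon "small deformations of complex structure" picture with the Newton--Okounkov/toric-degeneration picture, and tracking the string-of-reflections in the coefficients so that the linear parts of the $A_j$ line up on the nose with~(\ref{ps}) and the constant terms with~(\ref{eq:GK}). In particular one must be careful that the valuation/coordinate order used to build $P_{\bf i,m}$ agrees with the order in which the $\mathbb P^1$-bundles are stacked, since a reversal permutes the stages of the Bott tower. Since all of this is precisely the content of~\cite[Theorem~]{HY15} specialized to condition (P), I would present the argument as a recollection: cite the construction of the toric degeneration of $(Z_{\bf i},\mathcal L_{\bf i,m})$ with central fiber $X(P_{\bf i,m})$, invoke~\cite{HY15} for the smoothness of $P_{\bf i,m}$ and its identification with the Bott tower, and then carry out only the short explicit computation that specializes the fan data and the divisor class to the stated formulas~(\ref{ps}) and~(\ref{eq:GK}).
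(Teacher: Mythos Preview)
Your proposal is correct and aligns with the paper's treatment: the paper does not prove Theorem~\ref{thm:GK-Thm} at all but simply states it as a collection of known results, referring the reader to~\cite{HY15} for details and original references. Your plan to present the argument as a recollection from~\cite{GK,Pas,HY,HY15}, supplemented by the short explicit computation pinning down the fan data~(\ref{ps}) and the divisor class~(\ref{eq:GK}), is therefore more detailed than what the paper itself provides, but entirely consistent with it.
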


\begin{theorem}[{\cite[Theorem 3.4]{HY}}]\label{thm:HY-degen}
Let $\bf m\in\mathbb Z_{>0}^r$ and suppose that $({\bf i,m})$ satisfies condition (P).
Then the polytope $P_{\bf i,m}$ is a Newton-Okounkov body.
In particular, there exists a one-parameter flat family with generic fiber being isomorphic to $Z_{\bf i}$ and 
special fiber isomorphic to the toric variety $X(P_{\bf i,m})$. 
\end{theorem}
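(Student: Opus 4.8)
The plan is to realize $P_{\bf i,m}$ as the Newton--Okounkov body of $(Z_{\bf i},\mathcal L_{\bf i,m})$ for a valuation adapted to the iterated $\mathbb P^1$-bundle structure of $Z_{\bf i}$, and then to invoke Anderson's degeneration theorem (\cite{And13}). Along the tower $Z_{\bf i}=Z_{\bf i}(r)\to Z_{\bf i}(r-1)\to\cdots\to Z_{\bf i}(1)\to\{\mathrm{pt}\}$, the big cell of $Z_{\bf i}$ acquires coordinates $t_1,\ldots,t_r$, one $t_j$ per stage, with $t_r$ the fibre coordinate of the outermost $\mathbb P^1$-bundle $f_{r-1}$, $t_{r-1}$ that of the next stage, and so on -- these are the coordinates on $U_{\beta_1}\times\cdots\times U_{\beta_r}\cong\mathbb C^r$ of Section~\ref{sec:lower}. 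I would take $v$ to be a lexicographic term valuation in these fibre coordinates, the one underlying the Grossberg--Karshon recursion; note it is a priori different from Fujita's $v'_{\bf i}$ of Remark~\ref{rem:Fujita}, since it is designed to yield a combinatorial cube rather than a generalized string polytope.

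Next I would compute the value semigroup $S=S(Z_{\bf i},\mathcal L_{\bf i,m},v,\varphi_0)$ by induction on $r$ along the tower. Restricting a section of $\mathcal L_{\bf i,m}^{\otimes k}$ to the big cell and expanding it in the fibre coordinates, one peels off $t_r$ first, then $t_{r-1}$, and so on; the Grossberg--Karshon recursion then produces a basis of $H^0(Z_{\bf i},\mathcal L_{\bf i,m}^{\otimes k})$ whose elements have leading monomials $t_1^{x_1}\cdots t_r^{x_r}$, where for each $j$ the exponent satisfies $0\le x_j\le A_j(x_{j+1},\ldots,x_r)$. Condition (P) is exactly what makes these upper bounds nonnegative, so that the exponent vectors run precisely over the lattice points of $kP_{\bf i,m}$. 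Since $v$ assigns to a function the exponent vector of its lexicographically extremal monomial, it maps this basis bijectively onto the lattice points of $kP_{\bf i,m}$, whence the degree-$k$ part of $S$ is exactly that set.

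To conclude, by Theorem~\ref{thm:GK-Thm}(1) condition (P) makes $P_{\bf i,m}$ a smooth, hence normal, lattice polytope, so $S$ is the finitely generated, saturated semigroup of lattice points in the cone over $P_{\bf i,m}$; therefore $\Delta(Z_{\bf i},\mathcal L_{\bf i,m},v,\varphi_0)=P_{\bf i,m}$, which is the first assertion. Anderson's theorem then provides a flat one-parameter family degenerating $Z_{\bf i}$ to $\mathrm{Proj}$ of the associated graded algebra of $S$, which -- the polytope being normal -- is the projective toric variety $X(P_{\bf i,m})$; comparing the inner facet normals of $P_{\bf i,m}$ (the $e_j$ from $\{x_j=0\}$ and the $u_j^0$ of~(\ref{ps}) from $\{x_j=A_j\}$) identifies it with the Bott tower of Theorem~\ref{thm:GK-Thm}(2).

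The main obstacle is the middle step: showing that $S$ fills out exactly the lattice points under $P_{\bf i,m}$ -- neither too many (automatic once $v$ is injective on a spanning family) nor too few. The delicate half is constructing the Grossberg--Karshon basis with control on its $v$-leading monomials, and it is here that condition (P) is indispensable: it guarantees that the ``twisted cube'' of the recursion is an honest polytope rather than a signed virtual object, so that no cancellation occurs among the monomials and the number of basis sections matches the Ehrhart count of $kP_{\bf i,m}$.
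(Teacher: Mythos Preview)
The paper does not give a proof of this statement: it is quoted verbatim as \cite[Theorem 3.4]{HY} and used as a black box. So there is no ``paper's own proof'' to compare against.

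That said, your sketch is a faithful outline of the argument in the cited reference. Harada--Yang do precisely what you describe: they use a valuation coming from the iterated $\mathbb P^1$-bundle structure (not Fujita's $v'_{\bf i}$), run the Grossberg--Karshon recursion to identify the value semigroup with the lattice points under the twisted cube, observe that condition (P) is exactly what keeps the twisted cube an honest (unsigned) polytope so that no cancellation occurs, and then invoke Anderson's theorem to get the flat toric degeneration. Your identification of the ``main obstacle'' is on target: the content of \cite{HY15} and \cite{HY} is largely the construction of that basis and the verification that condition (P) makes the combinatorics match. If you were to flesh this out, the points requiring care are (i) the precise definition of the valuation and the choice of trivializing section so that the recursion goes through cleanly, and (ii) the finite generation/saturation of the semigroup, which in \cite{HY} is handled via the smoothness of $P_{\bf i,m}$ (your appeal to Theorem~\ref{thm:GK-Thm}(1) is the right move here).
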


\begin{remark}
The polytope $P_{\bf i,m}$ enjoys further nice properties.
For instance, as proved in~\cite{HY}, under the condition $(P)$ the polytope $P_{\bf i,m}$ coincides with the generalized string polytope introduced in~\cite{Fujita}; 
the latter turns out to be unimodular to the polytope $\Delta_{\bf i,m}$ we are considering in Section~\ref{sec:lower} (by~\cite[Theorem 8.2]{Fujita} along with Remark~\ref{rem:Fujita}). 
\end{remark}

Finally, we apply Theorem~\ref{thm:HLD} to compute the Gromov widths of the polarized Bott-Samelson varieties 
$(Z_{\bf i},\mathcal L_{\bf i,m})$ whenever the pair $(\bf i,m)$ satisfies the condition (P).

\begin{corollary}\label{cor:caseline}
Let $\bf m\in\mathbb Z_{>0}^r$ and $({\bf i,m})$ satisfy condition (P). 
Let $X(P_{\bf i,m})$ be the symplectic projective toric manifold associated to the polytope $P_{\bf i,m}$.
Then
\begin{eqnarray*}
\omega_G(Z_{\bf i},\mathcal{L}_{\bf i,m}) & = &\omega_G (X(P_{\bf i,m}))\\
& = & \min\{m_j: \langle\alpha_{i_k}, \alpha_{i_j}^{\vee} \rangle=0 ~ \forall~ k>j\}\\
& = & \min\{\mathcal{L}_{\bf i,m}\cdot C_j: \mbox{$C_j$ line of $Z_{\bf i}$}\}.
\end{eqnarray*}
\end{corollary}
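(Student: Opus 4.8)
I would establish the chain of equalities by computing the two extreme quantities explicitly and matching them; the two middle equalities then drop out along the way. Note first that condition $(P)$ is in force throughout, since it is needed both for Theorem~\ref{thm:GK-Thm} and for the degeneration of Theorem~\ref{thm:HY-degen}.

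For the toric side, Theorem~\ref{thm:GK-Thm} shows that $X(P_{\bf i,m})$ is the Bott tower whose $r$ stages have ray generators $u_j^1=e_j^1$ and $u_j^0=-e_j^1-\sum_{k>j}\langle\alpha_{i_k},\alpha_{i_j}^{\vee}\rangle e_k^1$, polarized by $\mathcal M_{\bf i,m}=\sum_{j=1}^r a_j[D_j^0]$ with $a_j=\langle m_j\varpi_{i_j}+\cdots+m_r\varpi_{i_r},\alpha_{i_j}^{\vee}\rangle$. Hence $u(l)=u_l^0+u_l^1=-\sum_{k>l}\langle\alpha_{i_k},\alpha_{i_l}^{\vee}\rangle e_k^1$, so $u(l)=0$ exactly when $\langle\alpha_{i_k},\alpha_{i_l}^{\vee}\rangle=0$ for all $k>l$; and then $i_k\neq i_l$ for $k>l$ (otherwise $\langle\alpha_{i_k},\alpha_{i_l}^{\vee}\rangle=2$), so $\langle\varpi_{i_k},\alpha_{i_l}^{\vee}\rangle=\delta_{i_k,i_l}=0$ and $a_l=m_l$. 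Since $\lambda(l)=\mathcal M_{\bf i,m}\cdot R(l)=a_l$ (see the proof of Proposition~\ref{prop:bott-case-reformul}), Theorem~\ref{thm:HLD} — equivalently Corollary~\ref{cor:reform} together with Proposition~\ref{prop:bott-case-reformul} — yields
$$
w_G(X(P_{\bf i,m}))=\min\{\lambda(l):u(l)=0\}=\min\{m_j:\langle\alpha_{i_k},\alpha_{i_j}^{\vee}\rangle=0\ \forall\,k>j\},
$$
which is the second equality of the Corollary. Next I would identify the lines: for $j$ with $\langle\alpha_{i_k},\alpha_{i_j}^{\vee}\rangle=0$ for all $k>j$, each reflection $s_{i_\ell}$ with $j<\ell\le r$ fixes $\alpha_{i_j}$ and $\alpha_{i_j}^{\vee}$ (orthogonality being symmetric), so by Proposition~\ref{prop:Tcurves}(3)
$$
\mathcal L_{\bf i,m}\cdot C_j=m_j+\sum_{j<k\le r}m_k\langle\varpi_{i_k},s_{i_k}\cdots s_{i_{j+1}}(\alpha_{i_j}^{\vee})\rangle=m_j+\sum_{j<k\le r}m_k\,\delta_{i_k,i_j}=m_j,
$$
and moreover $s_{i_r}\cdots s_{i_{j+1}}(\alpha_{i_j})=\alpha_{i_j}$ is simple, so $C_j$ is a minimal curve of $Z_{\bf i}$ by Theorem~\ref{thm:BKT}. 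These curves $C_j$ — the lines of $Z_{\bf i}$, being exactly the ones degenerating to a line of the Bott tower $X(P_{\bf i,m})$ — therefore give $\min\{\mathcal L_{\bf i,m}\cdot C_j:C_j\text{ a line of }Z_{\bf i}\}=\min\{m_j:\langle\alpha_{i_k},\alpha_{i_j}^{\vee}\rangle=0\ \forall\,k>j\}$, the third equality.

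It remains to prove the first equality. By Corollary~\ref{cor:main} and Corollary~\ref{cor:dominant-minimal} one has $w_G(Z_{\bf i},\mathcal L_{\bf i,m})=\min\{\mathcal L_{\bf i,m}\cdot C:C\text{ minimal curve of }Z_{\bf i}\}=\min_{1\le j\le r}\{\mathcal L_{\bf i,m}\cdot C_j\}$, and every summand of $\mathcal L_{\bf i,m}\cdot C_j$ is non-negative since each $\mathcal L_k$, being a pull-back of a globally generated line bundle on $G/B$, is globally generated. The inequality $w_G(Z_{\bf i},\mathcal L_{\bf i,m})\le w_G(X(P_{\bf i,m}))$ then follows from the previous paragraph, the minimum over minimal curves being at most $\mathcal L_{\bf i,m}\cdot C_j=m_j$ for each line $C_j$. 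For the reverse inequality — the crux — I would use the flat toric degeneration of Theorem~\ref{thm:HY-degen}: a minimal curve $C$ of $Z_{\bf i}$ through $z_0$ moves in a covering family of rational curves (Corollary~\ref{cor:mod-curves}), which, passing to the proper relative moduli space of genus-$0$ stable maps over the base, specializes to a covering family of genus-$0$ stable maps into the smooth projective toric Bott manifold $X(P_{\bf i,m})$, each of $\mathcal M_{\bf i,m}$-degree $\mathcal L_{\bf i,m}\cdot C$. Through a general point of $X(P_{\bf i,m})$, a general member of this family contains a free rational curve $\Gamma$ with $\mathcal M_{\bf i,m}\cdot\Gamma\le\mathcal L_{\bf i,m}\cdot C$; since minimal rational curves of $X(P_{\bf i,m})$ realize the least degree among covering families of rational curves (\cite[Theorem IV.2.4]{Kol}), we get $w_G(X(P_{\bf i,m}))=\min\{\mathcal M_{\bf i,m}\cdot C':C'\text{ minimal rational curve}\}\le\mathcal M_{\bf i,m}\cdot\Gamma\le\mathcal L_{\bf i,m}\cdot C$. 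Minimizing over all minimal curves $C$ gives $w_G(Z_{\bf i},\mathcal L_{\bf i,m})\ge w_G(X(P_{\bf i,m}))$, and the four quantities coincide. The main obstacle is making this specialization argument rigorous — controlling the flat limit of the covering family and verifying that the limiting stable maps still form a family covering $X(P_{\bf i,m})$ — and an alternative but more laborious route would be to deduce the identity $\min_{1\le j\le r}\{\mathcal L_{\bf i,m}\cdot C_j\}=\min\{m_j:\langle\alpha_{i_k},\alpha_{i_j}^{\vee}\rangle=0\ \forall\,k>j\}$ directly from the inequalities that condition $(P)$ forces on $({\bf i},{\bf m})$.
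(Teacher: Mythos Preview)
Your handling of the second and third equalities is close to the paper's, though for the third you only verify one implication: you show that the orthogonality condition $\langle\alpha_{i_k},\alpha_{i_j}^\vee\rangle=0$ for all $k>j$ forces $\mathcal L_k\cdot C_j=0$ for all $k>j$ (so $C_j$ is a line in the sense of~\cite[Remark~4.2]{BK19}) and $\mathcal L_{\bf i,m}\cdot C_j=m_j$, but you do not check that every line $C_j$ arises from such a $j$. The paper simply invokes~\cite[Remark~4.2]{BK19} for this characterization.

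The real divergence is in the first equality, and here you have made life much harder than necessary. You attempt to compare the two Gromov widths by specializing covering families of rational curves along the flat degeneration and then bounding degrees on the central fibre --- and you correctly flag that controlling the limit of the covering family is a genuine obstacle. The paper bypasses all of this with a one-line observation you have overlooked: under condition~(P) the toric degeneration $X(P_{\bf i,m})$ is a Bott manifold, hence \emph{smooth}. In the Harada--Kaveh framework recalled in the proof of Corollary~\ref{cor:Kaveh}, smoothness of the central fibre means the open set $U_0$ is all of $X(P_{\bf i,m})$ and the symplectomorphism $(U_0,\omega_0)\simeq(U,\omega_{\mathcal L_{\bf i,m}})$ extends to a symplectomorphism of the full manifolds $(Z_{\bf i},\mathcal L_{\bf i,m})$ and $X(P_{\bf i,m})$. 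Equal Gromov widths follow immediately, with no need for Corollary~\ref{cor:main}, no curve-counting, and no specialization argument. Your alternative route via the inequalities imposed by condition~(P) would also work in principle, but the symplectomorphism makes both detours unnecessary.
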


\begin{proof}
Note that the toric degeneration is smooth since it is a Bott manifold by Theorem~\ref{thm:GK-Thm}.
By Theorem~\ref{thm:HY-degen} along with the recalls made in the proof of Corollary~\ref{cor:Kaveh},
 $(Z_{\bf i},\mathcal{L}_{\bf i,m})$ and $X(P_{\bf i,m})$ are symplectomorphic. The first equality thus follows.

To prove the second equality, we apply Theorem~\ref{thm:HLD}.
We thus consider the primitive collections $\mathfrak P_j=\{e^1_j, u_j^0\}$ with $e^1_j+u_j^0=0$.
 By (\ref{ps}), it is clear that 
 \begin{equation} \label{pseq}
     e^1_j+u_j^0=0 \quad \mbox{if and only if }\quad \langle \alpha_{i_k}, \alpha_{i_j}^{\vee} \rangle=0 ~ \forall~ k>j.
 \end{equation}
Let $\widetilde{C}_j\in \mathcal K_j$ be a curve of $X(P_{\bf i,m})$ defined by the primitive relation $e^1_j+u_j^0=0$. 
Then by \cite[Proposition 6.4.1]{cox},  $\mathcal M_{\bf i, m}\cdot \widetilde{C}_j=a_j$ and in turn, $\mathcal M_{\bf i, m}\cdot \widetilde{C}_j=m_j$  thanks to (\ref{eq:GK}) and  (\ref{pseq}). 

The curve $C_j$ is a line if and only if $\mathcal L_k\cdot C_j=0$ for all $j<k$ and this is equivalent to the assertion that $s_{i_j}$ commutes with $s_{i_{j+1}},\ldots,s_{i_r}$; see Remark 4.2 in~\cite{BK21}.
Besides, by Proposition~\ref{prop:Tcurves}(3), $\mathcal L_k\cdot C_j=0$ for all $j>k$.
It follows $\mathcal L_{\bf i, \bf m}\cdot C_j=m_j$ if $C_j$ is a line of $Z_{\bf i}$. 
This yields the last equality and concludes the proof.
\end{proof}

The following example shows that the equalities in Corollary~\ref{cor:caseline} may not hold when the condition (P) is not satisfied.

\begin{example}\label{exple}
Let $G=SL_3$. Take ${\bf i}=(1, 2, 1)$ and  ${\bf m}=(m_1, m_2, m_3)\in \mathbb Z^3_{>0}$ with $m_1+m_2<m_3$.
Then $({\bf i, m})$ does not satisfy the condition (P) since (P-3) does not hold for $(x_2,x_3)=(0,m_3)$.
Besides, $\ell_1=m_1+m_2$, $\ell_2=m_2+m_3$ and $\ell_3=m_3$ hence $\omega_G(Z_{\bf i},\mathcal{L}_{\bf i,m})=\min \{\ell_j: j=1,2,3\}=m_1+m_2$ by Corollary~\ref{cor:main}. 
\end{example}

\subsection*{Acknowledgements} The authors thank the referees for several useful suggestions and comments, which improved the paper.

\noindent
\emph{The authors declare that they have no conflict of interest.}


\end{document}